\definecolor{dark-red}{rgb}{0.5,0.15,0.15}
\definecolor{dark-blue}{rgb}{0.15,0.15,0.6}
\definecolor{dark-green}{rgb}{0.15,0.6,0.15}
\newcommand{\spec}{\mathrm{Spec}}
\renewcommand{\hom}{\mathrm{Hom}}
\newcommand{\circledbullet}{
\tikz{
\pgfsetbaselinepointlater{\pgfpointanchor{X}{base}}
\pgfcircle{\pgfpointorigin}{0.12cm}
\pgfusepath{stroke}
\node (X) {$\bullet$};
}}
\newcommand{\cbull}{ 
\text{\footnotesize \circledbullet}}
\def\biblio{\bibliography{../biblio}\bibliographystyle{alpha}}
\numberwithin{equation}{section}
\newcommand{\F}{\mathbb{F}}
\newcommand{\Z}{\mathbb{Z}}
\newcommand{\Q}{\mathbb{Q}}
\newcommand{\G}{\mathbb{G}}
\newcommand{\W}{\mathbb{W}}
\renewcommand{\S}{\mathbb{S}}
\newcommand{\xr}{\xrightarrow}
\renewcommand{\frak}{\mathfrak}
\newcommand{\einfty}{\mathbf{E}_\infty}
\newcommand{\cal}{\mathcal}
\newcommand{\pics}{\mathfrak{pic}}
\newcommand*{\myfnsymbolsingle}[1]{%
  \ensuremath{%
    \ifcase#1
    \or 
      *%
    \or 
      \dagger
    \or 
      \ddagger
    \or 
      \mathsection
    \or 
      \mathparagraph
    \else 
      \@ctrerr  
    \fi
  }%
}   
\renewcommand*{\backref}[1]{}
\renewcommand*{\backrefalt}[4]{%
  \ifcase #1 %
No citations.
  \or
(cit. on p. #2).%
  \else
(cit on pp. #2).%
  \fi%
}
\newalphalph{\myfnsymbolmult}[mult]{\myfnsymbolsingle}{}
\DeclareMathOperator{\pic}{Pic}
\DeclareMathOperator{\Ext}{Ext}
\DeclareMathOperator{\Mod}{Mod}
\DeclareMathOperator{\End}{End}
\DeclareMathOperator{\Gal}{Gal}
\newcommand{\sym}{\mathrm{Sym}}
\newcommand{\gl}{\mathfrak{gl}}
\newcommand{\dequal}{\stackrel{\cdot}{=}}
\newcommand{\Pic}{\operatorname{\mathcal{P}ic}}
\newtheorem{theorem}{Theorem}[section]
\newtheorem{lemma}[theorem]{Lemma} 
\newtheorem{cor}[theorem]{Corollary}
\newtheorem{question}[theorem]{Question}
\newtheorem{prop}[theorem]{Proposition} \theoremstyle{definition}
\newtheorem{rem}[theorem]{Remark} 
\newtheorem{definition}[theorem]{Definition}
 \newtheorem{cons}[theorem]{Construction}
\newtheorem{example}[theorem]{Example}
\Crefname{cor}{Corollary}{Corollaries}
\Crefname{conjecture}{Conjecture}{Conjectures}
\Crefname{rem}{Remark}{Remarks}
\Crefname{prop}{Proposition}{Propositions}	
\Crefname{question}{Question}{Questions}
\Crefname{figure}{Figure}{Figures}
\title{Picard groups of higher real $K$-theory spectra at height $p-1$}
\author{Drew Heard}
\address{Max Planck Institute for Mathematics, Bonn}
\email{drew.heard@mpim-bonn.mpg.de}
\author{Akhil Mathew}
\address{Department of Mathematics, Harvard University, Cambridge, MA}
\email{amathew@math.harvard.edu}
\author{Vesna Stojanoska}
\address{Department of Mathematics, University of Illinois at Urbana-Champaign, Urbana, IL}
\email{vesna@illinois.edu}
\begin{document}

\begin{abstract}
Using the  descent spectral sequence for a Galois extension of ring spectra,
we compute the Picard group of the higher real $K$-theory spectra of Hopkins and Miller at height $n=p-1$, for $p$ an odd prime. More generally, we determine the Picard groups of the homotopy fixed points spectra $E_n^{hG}$, where $E_n$ is Lubin-Tate $E$-theory at the prime $p$ and height $n=p-1$, and $G$ is \emph{any} finite subgroup of the extended Morava stabilizer group. We find that these Picard groups are always cyclic, generated by the suspension. 
\end{abstract}
\maketitle
\setcounter{tocdepth}{1}
{\hypersetup{linkcolor=black}\tableofcontents}
\def\biblio{}
\section{Introduction}\label{sec:introduction}

\subsection{Picard groups in stable homotopy theory}
Let $(\cal{C},\otimes,\mathbf{1})$ be any symmetric monoidal category. Recall that an
object  $X \in \cal{C}$ is called \emph{invertible} if there exists an object
$X^{-1} \in \mathcal{C}$ such that $X \otimes X^{-1} \simeq \mathbf{1}$. We let $\pic(\cal{C})$ denote
the group of isomorphism classes of invertible objects. For example, if $\cal{C}
= \Mod_R^{cl}$ is the category of (classical) $R$-modules, for $R$ a
commutative ring, then this recovers the classical notion of the Picard group
of a ring. As another example, it is not hard to show that the Picard group of
the stable homotopy category is $\Z$, generated by $S^1$. 
Our goal is to develop tools based on descent theory for determining the Picard groups of
\emph{module categories} in stable homotopy theory, i.e., module spectra over
an $\einfty$-ring spectrum, and to apply them to a specific case of interest
in chromatic homotopy theory. 

Let $R$ be an $\einfty$-ring spectrum, and let $\Mod_R$ denote its symmetric
 monoidal $\infty$-category of modules (i.e., module spectra).
We are interested in the Picard group of $\Mod_R$ (or equivalently, of its
homotopy category). When the homotopy groups of $R$ 
are a \emph{regular} ring, it is often possible to reduce this problem to pure
algebra (i.e., the Picard group of the graded ring $\pi_*(R)$), cf.
\cite{baker_richter_invertible}, \cite[Thm.~2.4.6]{mathew_stojanoska_picard},
and 
\cite[Thm.~6.4]{HillMeier} for the most general statement. However, in general, the determination of the Picard
group is a subtle and complicated problem. 

Our basic setup is  that 
we will be working with ring spectra which themselves do not have
regular homotopy groups, but are nonetheless closely related to ring spectra that do and whose
Picard groups can be easily determined.
We refer to \cite{mathew_stojanoska_picard} for a more detailed introduction to
these types of questions. 

\newtheorem*{setup}{Basic problem}
\begin{setup}
Consider an $\einfty$-ring $B$ such that $\pi_*(B)$ is regular, together with an action 
of a finite group $G$ on $B$ as an $\einfty$-ring. From this, we can build a new $\einfty$-ring $A =
B^{hG}$ as the homotopy fixed points of the $G$-action.
As $\pi_*(B)$ was assumed regular, the Picard group of $B$ is the Picard
group of the graded ring $\pi_*(B)$. 
However, in general $\pi_*(A)$ will be very far from regular, so one expects the determination of the Picard group 
of $A$ to be an interesting \emph{homotopical} problem. 
\end{setup}

We will primarily be interested in the case where $A \to B$ is a $G$-Galois extension of $\einfty$-ring
spectra in the sense of Rognes \cite{rognes_galois}. That is, we assume there exists a $G$-action on $B$ as an $\einfty$-$A$-algebra such that $A \simeq B^{hG}$ and
the natural map $B \otimes_A B \to F(G_+,B)$ is an equivalence. We additionally
assume that the extension is \emph{faithful}, i.e. that for each $A$-module $N$ with
$N \otimes_A B \simeq \ast$, we have $N \simeq \ast$.
In this case, unpublished work of Gepner-Lawson gives a powerful tool for computing the
Picard group of $A$ via a derived form of Galois descent. For an account of this, the reader is referred to \cite{mathew_stojanoska_picard}.
In this paper, we will use related techniques to show that in a certain class of examples, one does not
obtain new ``exotic'' classes in $\pic(A)$. 

\subsection{The descent spectral sequence}

To describe the Gepner-Lawson method, we will need to use
higher homotopy coherences which we now review (again, see
\cite{mathew_stojanoska_picard} for a more detailed exposition). 
Let $(\mathcal{C}, \otimes, \mathbf{1})$ be a symmetric monoidal
$\infty$-category.  Keeping track of all higher isomorphisms, we obtain a 
\emph{Picard $\infty$-groupoid} (or \emph{Picard space}), $\Pic(\cal{C})$ of invertible objects in
$\mathcal{C}$, whose connected components give the Picard group of
$\mathcal{C}$. 

Thanks to the symmetric monoidal
product $\otimes$, $\Pic(\mathcal{C})$ is a grouplike $\einfty$-space, and so is the delooping
of a connective spectrum $\pics(\cal{C})$ called the \emph{Picard spectrum}. We have
identifications
\[
\begin{split}
\pi_0\Pic(\cal{C}) &\simeq \pic(\cal{C}),\\
\pi_1\Pic(\cal{C}) &\simeq (\pi_0\End(\mathbf{1}))^\times, \text{ and }\\
\pi_k\Pic(\cal{C}) &\simeq \pi_{k-1}\End(\mathbf{1}),\text{ for } k > 1. 
\end{split}
\]
\begin{example}
	Suppose that $\cal{C} = \Mod_R^{cl} $ is the symmetric monoidal category of (discrete) modules over a
	commutative ring  $R$. Then $\Pic(\Mod_R^{cl})$ is a space with
\[
\begin{split}
\pi_0\Pic(\Mod_R^{cl}) &\simeq \pic(R),\\
\pi_1\Pic(\Mod_R^{cl}) &\simeq R^\times, \text{ and}\\
\pi_k\Pic(\Mod_R^{cl}) & = 0,\text{ for } k>1. 
\end{split}
\]

\end{example}
 Let $R$ be an $\einfty$-ring. We write $\Pic(R)$ and $\pics(R)$ for $\Pic(\Mod_R)$ and $\pics(\Mod_R)$ respectively. 
The space $\Pic(R)$ (or the spectrum $\pics(R)$) provides a means of capturing
the higher homotopy coherences that appear in descent processes. 
In particular, we have the following
basic descent result due in unpublished work to Gepner--Lawson \cite{GepnerLawson}. 
\begin{theorem}[{Cf.~\cite[Sec. 3.3]{mathew_stojanoska_picard}}]
\label{galoispicdesc}
 Suppose that $A \to B$ is a faithful $G$-Galois extension of $\einfty$-rings. Then there is an equivalence of connective spectra
 \[
\pics(A) \simeq \tau_{\ge 0} \pics(B)^{hG}.
 \]
In the associated homotopy fixed point spectral sequence
 \begin{equation}\label{eq:picss}
H^s(G,\pi_t \pics(B)) \Rightarrow \pi_{t-s}(\pics B)^{hG},
 \end{equation}
 the abutment for  $t=s$ is the Picard group $\pic(A)$. 
\end{theorem}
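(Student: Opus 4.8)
The plan is to reduce the computation of $\pics(A)$ to a descent statement at the level of module $\infty$-categories, and then to exploit that the formation of Picard spectra takes limits of symmetric monoidal $\infty$-categories to limits of connective spectra. First I would recall that a faithful $G$-Galois extension $A \to B$ satisfies effective descent for modules: since $B$ is dualizable over $A$ and the extension is faithful, the coaugmented Amitsur (cobar) cosimplicial $\einfty$-ring $A \to B \rightrightarrows B \otimes_A B \rightrightarrows \cdots$ is a descent diagram, so that
\[
\Mod_A \simeq \holim_{\Delta}\left( [n] \mapsto \Mod_{B^{\otimes_A (n+1)}} \right).
\]
Iterating the Galois equivalence $B \otimes_A B \simeq F(G_+, B)$ gives $B^{\otimes_A (n+1)} \simeq F(G^n_+, B)$, hence $\Mod_{B^{\otimes_A (n+1)}} \simeq \prod_{G^n} \Mod_B$, and under these identifications the cobar cosimplicial $\infty$-category becomes precisely the bar construction computing the homotopy fixed points of the induced $G$-action on $\Mod_B$ by symmetric monoidal autoequivalences. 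Thus $\Mod_A \simeq \Mod_B^{hG} = \holim_{BG} \Mod_B$.

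Then I would invoke the fact that, viewed as a functor to connective spectra, $\pics$ preserves limits: for a diagram of symmetric monoidal $\infty$-categories and symmetric monoidal functors, an object of the limit is invertible exactly when each of its images is, and its inverse is unique up to a contractible space of choices, so these inverses assemble and $\Pic$ of the limit is the limit of the $\Pic$'s. Since the limit of connective spectra is the connective cover of the limit computed in all spectra, applying this to $\Mod_A \simeq \holim_{BG}\Mod_B$ yields
\[
\pics(A) = \pics(\Mod_A) \simeq \tau_{\ge 0}\left( \holim_{BG} \pics(\Mod_B) \right) = \tau_{\ge 0}\, \pics(B)^{hG},
\]
where $\pics(B) = \pics(\Mod_B)$ carries the $G$-action functorially induced by the $G$-action on $B$. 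This is the asserted equivalence of connective spectra. (Equivalently, one may argue through infinite loop spaces: $\Pic$ preserves limits of spaces, giving $\Pic(A) \simeq \Pic(B)^{hG}$ as grouplike $\einfty$-spaces, and since $\Omega^\infty$ is an equivalence from connective spectra to grouplike $\einfty$-spaces that only depends on the connective cover, this forces $\pics(A) \simeq \tau_{\ge 0}\pics(B)^{hG}$.)

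For the spectral sequence, one takes the standard homotopy fixed point spectral sequence of the $G$-spectrum $\pics(B)$, which has the displayed form $H^s(G, \pi_t \pics(B)) \Rightarrow \pi_{t-s}\big(\pics(B)^{hG}\big)$ and converges because $\pics(B)$ is bounded below and $G$ is finite. Its abutment in total degree $t-s = 0$, to which the line $t = s$ contributes, is $\pi_0 \pics(B)^{hG}$; and since connective truncation preserves $\pi_0$, the previous step gives $\pi_0 \pics(B)^{hG} = \pi_0 \pics(A) = \pic(A)$. Hence the $t=s$ abutment is the Picard group of $A$, as claimed.

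The one genuinely substantial point is the descent statement of the first step: one needs that the Galois extension is \emph{descendable} in the sense of Mathew, so that the Amitsur cosimplicial diagram really converges to $\Mod_A$ --- this is where dualizability of $B$ over $A$ together with faithfulness enters, rather than mere faithful flatness --- and one must carefully match the resulting cosimplicial $\infty$-category with the bar construction for $\Mod_B^{hG}$. Once that is in hand, everything else is formal, relying only on limit-preservation of $\Pic$, the interaction of truncation with limits of connective spectra, and the basic relationship between connective spectra and infinite loop spaces.
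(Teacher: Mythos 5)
Your argument is correct and is essentially the proof given in the cited source (Mathew--Stojanoska, Sec.~3.3, following Gepner--Lawson), which this paper quotes rather than reproves: faithful Galois extensions are descendable, so $\Mod_A \simeq \Mod_B^{hG}$, and then one applies the limit-preservation of $\Pic$ together with the fact that limits of connective spectra are connective covers of limits of spectra. The identification of the $t=s$ abutment with $\pi_0\pics(A) = \pic(A)$ follows exactly as you say.
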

Gepner--Lawson, in unpublished
work \cite{GepnerLawson}, have completely calculated this spectral sequence in the
fundamental example of
the $C_2$-Galois extension $KO \to KU$, and have shown that $\pic(KO) \simeq
\Z/8$, generated by $\Sigma KO$. This calculation, via a slightly different method, is originally due to Hopkins
(unpublished). We refer to \cite{mathew_stojanoska_picard} for an account of the  calculation of this spectral sequence.  
In practice, the hope in applying \Cref{galoispicdesc} above is that $\pic(B)$ is easy to determine because the
homotopy groups $\pi_*(B)$ are somehow simpler, and so the problem revolves
around determining the behavior of the spectral sequence.

In fact, the above spectral sequence \eqref{eq:picss} turns out to be very calculable in specific
instances by comparing with the additive spectral sequence
\begin{equation} \label{eq:addhfpss} E_2^{s,t}   = H^s( G, \pi_t B) \implies
\pi_{t-s } A, \end{equation}
arising from the equivalence $A \simeq B^{hG}$.
In practice, one already knows the behavior of the additive spectral sequence
in computing $\pi_*(A)$. 
Since for $k>1$ there is a $G$-equivariant isomorphism $\pi_k(\pics(B)) \simeq
\pi_{k-1}B$, one might hope that there is a relationship between differentials
in the homotopy fixed point spectral sequences for $\pi_*(B^{hG})$ and for
$\pi_*(\pics(B)^{hG})$. To this effect, the following comparison result is proved
in~\cite[Sec.~5]{mathew_stojanoska_picard}.
\begin{lemma}[{\cite[Comparison Tool
5.2.4]{mathew_stojanoska_picard}}] 
	Suppose that $A \to B$ is a faithful $G$-Galois extension. Whenever $2 \le r
	\le t-1$, there is an equality of differentials $d_r^{s,t}(\pics B) =
	d_r^{s,t-1}(B)$ in the Picard and additive descent spectral sequences. 
\end{lemma}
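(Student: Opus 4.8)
The plan is to observe that in each of the two spectral sequences the differential $d_r$ is an operation which only depends on a finite piece of the Postnikov tower of the relevant $G$-spectrum, and then to identify those pieces $G$-equivariantly in the range of degrees in which the lemma is asserted. Since $G$ acts on $B$ through $\einfty$-ring maps, it acts on every functorial construction below, and the equivalences produced will be $G$-equivariant without further comment.

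Both the Picard spectral sequence \eqref{eq:picss} and the additive one \eqref{eq:addhfpss} are the homotopy fixed point spectral sequences obtained by applying $(-)^{hG}$ to the Postnikov towers of the $G$-spectra $\pics(B)$ and $B$, respectively. In the spectral sequence of such a tower, the differential $d_r$ leaving filtration $t$ lands in filtration $t+r-1$, and on a class surviving to the $E_r$-page it is computed --- modulo an indeterminacy generated by lower differentials --- from the iterated $k$-invariant data carried by the truncation $\tau_{[t,\,t+r-1]}$ of the $G$-spectrum. Hence it suffices to produce a $G$-equivariant equivalence $\tau_{[t,\,t+r-1]}\pics(B) \simeq \tau_{[t,\,t+r-1]}\Sigma B$: since $(\Sigma B)^{hG} \simeq \Sigma(B^{hG})$, the spectral sequence of $\Sigma B$ is that of $B$ reindexed, with $d_r^{s,t}(\Sigma B) = d_r^{s,t-1}(B)$, and since $\pi_k \pics(B) \cong \pi_{k-1}B \cong \pi_k \Sigma B$ for $k \ge 2$ the $E_2$-terms entering the differential already agree.

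The required equivalence of Postnikov sections is built from two functorial inputs. First, there is a canonical equivalence $\tau_{\ge 1}\pics(B) \simeq \Sigma \gl_1(B)$, as both sides have $\pi_1 \cong (\pi_0 B)^\times$ and $\pi_k \cong \pi_{k-1}B$ for $k \ge 2$ with matching $k$-invariants. Second, the standard comparison between the spectrum of units and the identity functor gives a natural equivalence $\tau_{[n,\,2n-1]}\gl_1(B) \simeq \tau_{[n,\,2n-1]}B$ for every $n \ge 1$, because on the relevant component of $\Omega^\infty B$ the multiplicative and additive $\einfty$-structures differ only through the products $\pi_i B \otimes \pi_j B \to \pi_{i+j}B$, a quadratic correction that cannot affect $k$-invariants in degrees below $2n$. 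Taking $n = t-1$ and combining,
\[
\tau_{[t,\,t+r-1]}\pics(B) \;\simeq\; \Sigma\,\tau_{[t-1,\,t+r-2]}\gl_1(B) \;\simeq\; \Sigma\,\tau_{[t-1,\,t+r-2]}B \;=\; \tau_{[t,\,t+r-1]}\Sigma B,
\]
where the middle step is valid precisely when $[t-1,\,t+r-2] \subseteq [t-1,\,2t-3]$, i.e.\ when $r \le t-1$; the inequality $r \ge 2$ is automatic for a descent spectral sequence beginning at $E_2$. This recovers exactly the numerical hypothesis, and transporting the identification through the computation of $d_r$ gives $d_r^{s,t}(\pics B) = d_r^{s,t-1}(B)$.

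The main obstacle is the second step: rigorously establishing that $d_r$ is an operation depending only on $\tau_{[t,\,t+r-1]}$ of the $G$-spectrum, with a description of its indeterminacy precise enough that an equivalence of these truncations forces the two differentials to coincide --- and verifying that the indeterminacy is itself governed in the range $r \le t-1$. Once this bookkeeping about spectral sequences of towers is in hand, what remains is the elementary identification of Postnikov sections and $E_2$-terms described above.
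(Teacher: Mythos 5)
Your sketch is correct and follows essentially the same route the paper indicates: the lemma is quoted from \cite[Comparison Tool 5.2.4]{mathew_stojanoska_picard}, and the paper attributes it precisely to the truncated identification $(\Sigma^{-1}\pics(R))_{[n,2n-1]}\simeq R_{[n,2n-1]}$ coming from the Freudenthal-range comparison of $\gl_1$ with the identity, which is exactly the equivalence you construct (and your bookkeeping $r\le t-1$ matches the stated range).
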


The above result arises from the fact that there is a canonical identification
of truncated spectra
\begin{equation}   \label{spectrumid} (\Sigma^{-1} \pics(R))_{[n, 2n-1]} \simeq R_{[n, 2n-1]}
\end{equation}
for any $n > 0$. The existence of the equivalence \eqref{spectrumid}
is a consequence of the Freudenthal suspension theorem. 
Of course, the main case of interest for these questions is when $s = t $. 

Our main technical advance in this paper is a strengthening of this comparison result when
certain primes are inverted in $\pi_0A$.
Motivated by the ``truncated'' logarithm in algebra, we obtain the following
expanded range of comparison of spectral sequences.  

\begin{theorem}\label{lem:diffimport}
	Suppose that $A \to B$ is a faithful $G$-Galois extension and $(p-1)!$ is
	invertible in $\pi_0 A$. Whenever $2 \le r \le (p-1)(t-1)$, there is an
	equality of differentials $d_r^{t,t}(\pics B) = d_r^{t,t-1}(B)$ in the
	Picard and additive descent spectral sequences. 
\end{theorem}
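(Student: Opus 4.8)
The plan is to refine the argument behind the Comparison Tool of \cite{mathew_stojanoska_picard} by using a finer truncation of the shifted Picard spectrum. Recall that the existing comparison for $2 \le r \le t-1$ came from the equivalence of truncations $(\Sigma^{-1}\pics(R))_{[n,2n-1]} \simeq R_{[n,2n-1]}$, which is Freudenthal applied once. To extend the range, I would instead analyze the map $\Sigma^{-1}\pics(R) \to R$ (the ``logarithm'') more carefully in a larger range of degrees, controlling the difference between the two by the homotopy of the cofiber, which receives contributions from iterated smash powers (James–Hopf / EHP-type data). The point of inverting $(p-1)!$ is exactly that, as in the truncated $p$-adic logarithm $\log(1+x) = x - x^2/2 + \dots$, the first $p-1$ terms of the relevant obstruction series can be killed by dividing by the factorials $2!, 3!, \dots, (p-1)!$; this should push the range where $\Sigma^{-1}\pics(B)$ and $B$ agree, compatibly with the $G$-action, from $[n,2n-1]$ out to roughly $[n,pn-1]$ (or $[n, (p-1)(n-1)+\cdots]$, matching the stated bound $r \le (p-1)(t-1)$).

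Concretely the steps are: \textbf{(1)} Work with the connective spectrum $\pics(R)$ and its fiber sequence relating it to $gl_1(R)$ and $R$; identify the first few homotopy operations (the quadratic construction and higher ones) that obstruct a splitting of $\Sigma^{-1}\pics(R)\to R$ beyond the stable range. \textbf{(2)} Show that after inverting $(p-1)!$ these obstructions vanish through degree about $p\cdot n$, so that one gets a $G$-equivariant equivalence of truncated spectra $(\Sigma^{-1}\pics(B))_{[n, N(n)]} \simeq B_{[n,N(n)]}$ with $N(n)$ of size roughly $(p-1)(n-1)+n$. \textbf{(3)} Feed this equivalence of $G$-spectra into the homotopy fixed point spectral sequences: an equivalence of truncations in a range $[n,N]$ forces the $E_2$-terms and differentials $d_r$ of the two HFPSS to agree for those $r$ that are ``supported'' in that range, which for the line $t = s$ translates precisely into $d_r^{t,t}(\pics B) = d_r^{t,t-1}(B)$ for $2 \le r \le (p-1)(t-1)$. \textbf{(4)} Track the degree bookkeeping carefully to confirm the constant is exactly $(p-1)(t-1)$ and not something off by one.

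The main obstacle I expect is step \textbf{(2)}: making precise which James–Hopf invariants / higher Whitehead-type obstructions control the map $\Sigma^{-1}\pics(R) \to R$ in the metastable range, and verifying that each obstruction in degrees $\le pn$ is annihilated after dividing by the appropriate factorial $\le (p-1)!$. This is the homotopy-theoretic analogue of checking that the truncated logarithm $x - x^2/2 + \dots + (-1)^{p} x^{p-1}/(p-1)$ is a good enough approximation, and getting the numerology of the suspension filtration to line up with $(p-1)(t-1)$ will require some care. Once that equivalence of truncated $G$-spectra is in hand, step \textbf{(3)} is a formal consequence of the functoriality of the homotopy fixed point spectral sequence in the coefficient spectrum, exactly as in the proof of the original Comparison Tool in \cite[Sec.~5]{mathew_stojanoska_picard}.
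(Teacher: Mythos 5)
Your route is genuinely different from the paper's, and the paper in fact flags it explicitly: just after \eqref{squarezero} the authors remark that one can construct a ``spectral'' truncated logarithm giving an extended-range analogue of that equivalence, but they defer this to forthcoming work and do \emph{not} use it here. Instead, the paper proves the more precise \Cref{topunivformula} by corepresenting the situation: it builds the universal cosimplicial $\mathbb{Z}[1/(p-1)!]$-local $\einfty$-ring $R^\bullet = \mathrm{Free}(\mathcal{G}^\bullet)[1/(p-1)!]$ carrying a class in $E_k^{t+1,t}$, decomposes $\tau_{\leq pt}R^\bullet$ into the extended powers $\mathcal{G}^{\bullet \wedge u}_{h\Sigma_u}$, and shows in \Cref{extpower} that for $2 \leq u \leq p-1$ these contribute nothing in the relevant range, because the only cohomology there is a rank-one module on which $\Sigma_u$ acts through the sign representation, whose coinvariants vanish once $u!$ is invertible. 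Sparsity in the universal example then forces $d_{k,\times}=d_k$ for $k \leq (p-1)t$, and naturality transports this to every example, in particular to the cobar/Galois situation of the theorem.

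The gap in your proposal is that its central step (2) --- a functorial, $G$-equivariant equivalence $(\Sigma^{-1}\pics(B))_{[n,\,pn-1]} \simeq B_{[n,\,pn-1]}$ --- \emph{is} the hard theorem, and the mechanism you offer (killing James--Hopf/EHP-type obstructions by dividing by factorials) is a heuristic, not an argument. What actually makes the extension work is the splitting of the free $\einfty$-ring on a highly connected spectrum into extended powers and the vanishing, after inverting $(p-1)!$, of the contributions of the $u$-th extended power for $2 \leq u \leq p-1$ in the metastable range (sign-representation coinvariants); your sketch never isolates or establishes this, and it is exactly the content of \Cref{extpower}. There is also a numerological slip: to compare $d_r^{t,t}$ for all $r \leq (p-1)(t-1)$ the truncation must reach $[n,\,pn-1]$ with $n=t-1$ (so that the target degree $t+r-2 \leq p(t-1)-1$ is covered), whereas your stated range $(p-1)(n-1)+n = pn-p+1$ falls short for $p>2$. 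So as written this is a plausible program --- one the authors themselves endorse and pursue elsewhere --- but not a proof; the universal-example argument of \Cref{topunivformula} is the self-contained substitute the paper actually uses.
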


We also obtain a 
formula (in \cref{topunivformula}) for the first differential outside the above range in terms of power
operations. 

\subsection{Higher real $K$-theories}

The primary application of these methods in the present paper will be to the
higher real $K$-theory spectra of Hopkins--Miller. In particular, we will be
interested in the case where $B = E_n$, the $n$-th Morava $E$-theory, the
Landweber-exact ring spectrum with
\[
(E_n)_* \simeq \W(\F_{p^n})[\![u_1,\ldots,u_{n-1}]\!][u^{\pm 1}],
\]  
where $\W(\F_{p^n})$ denotes the Witt vectors over $\F_{p^n}$, described in more detail in~\Cref{wittvectorfree}. Here the $u_i$
have degree 0, while $u$ has degree $-2$. This is an $\einfty$-ring
spectrum~\cite{rezk_hm,goerss_hopkins} with an action through $\einfty$-ring
maps by a profinite group, $\G_n = \S_n \rtimes \Gal(\F_{p^n}/\F_p)$, the
(extended) Morava stabilizer group. Here $\S_n$ is the automorphism group of
the Honda formal group law over $\F_{p^n}$. 

By work of Devinatz and
Hopkins~\cite{dev_hop_04}, for any closed subgroup $G \subset \G_n$ there is an
associated $\einfty$-ring spectrum $E_n^{G}$, which acts like a homotopy fixed
point spectrum, and indeed, agrees with the usual construction when $G$ is
finite. For $G$ a maximal finite subgroup of $\G_n$ these sometimes are
called \emph{higher real $K$-theory spectra}; the nomenclature is inspired by the fact that when $n=1$ and $p=2$, the maximal finite subgroup of $\G_1 \simeq \Z_2^\times$ is $C_2 \simeq \{ \pm 1 \}$ and $E_1^{hC_2}$ is nothing other than 2-complete real $K$-theory.

Consider now the case of $G \subset \G_n$ a \emph{finite} subgroup. 
Then, the natural map $E_n^G = E_n^{hG} \to E_n$ is a $K(n)$-local $G$-Galois extension
by \cite[Thm.~5.4.4]{rognes_galois}
(see also~\Cref{sec:hmgalois} below),
so the Galois descent methods and the spectral sequence of~\eqref{eq:picss} are applicable. 
Using our general tools and the Gepner--Lawson spectral sequence, we prove the
following main result. 
\begin{theorem}\label{MainTheorem}
Let $p$ be an odd prime and $G \subset \G_{p-1}$ any finite subgroup. Then, the Picard group $\pic(E_{p-1}^{hG})$ is cyclic, generated by $\Sigma E_{p-1}^{hG}$. 
\end{theorem}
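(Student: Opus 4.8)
The plan is to apply \Cref{galoispicdesc} to the $K(n)$-local $G$-Galois extension $E_n^{hG} \to E_n$ with $n = p-1$, and to control the descent spectral sequence \eqref{eq:picss} computing $\pi_0 = \pic(E_n^{hG})$ using the comparison results, especially the strengthened \Cref{lem:diffimport}. The crucial arithmetic observation is that $|G|$ divides a product of factors that are, up to primes dividing $|G|$, controlled: since $n = p-1$, any finite subgroup $G \subset \G_{p-1}$ has order divisible by $p$ only to the first power (the $p$-Sylow of the maximal finite subgroup is cyclic of order $p$), and more importantly $(p-1)!$ will be invertible in $\pi_0(E_{p-1}^{hG})$ after suitable localization, so \Cref{lem:diffimport} applies and the comparison range $2 \le r \le (p-1)(t-1)$ is unusually long. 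First I would set up the relevant input: identify the group cohomology $H^s(G, \pi_t E_n)$, reducing (by a transfer argument and faithfulness) to the case where $G$ is a maximal finite subgroup, for which Hopkins--Miller-type computations give the additive descent spectral sequence for $\pi_* E_n^{hG}$; this is classical at height $p-1$ going back to work of Hopkins, Miller, Nave, and others on the spectra sometimes called $EO_{p-1}$.

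Next I would transport this additive information to the Picard spectral sequence. The starting term $\pi_1\pics(E_n) = (\pi_0 E_n)^\times$ has $G$-cohomology that we must understand: $H^s(G, (\pi_0 E_n)^\times)$. Using \eqref{spectrumid} and the $G$-equivariant identification $\pi_k\pics(E_n) \simeq \pi_{k-1} E_n$ for $k > 1$, the higher rows of \eqref{eq:picss} agree with shifted rows of \eqref{eq:addhfpss}, and via \Cref{lem:diffimport} the differentials agree in the long range $2 \le r \le (p-1)(t-1)$. The contributions to $\pi_0$ of \eqref{eq:picss} come from the diagonal $t = s$; the key point is that away from the top filtration, everything is governed by the additive spectral sequence for $\pi_{-1} E_n^{hG}$ — which we know — so those classes are either killed by differentials or survive to give only the expected $\Z$ (coming from $\Sigma E_n^{hG}$) plus torsion coming from $H^*(G, (\pi_0 E_n)^\times)$. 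I would then argue that the algebraic Picard group contribution, i.e., the torsion coming from the unit-group cohomology $H^{s}(G, (\pi_0 E_n)^\times)$ in filtrations $s \ge 1$, vanishes or is absorbed: since $\pi_0 E_n = \W(\F_{p^n})[\![u_1,\dots,u_{n-1}]\!]$ is a regular complete local ring, its Picard group is trivial and the relevant unit-group cohomology classes are controlled — this is where the regularity hypothesis from the Basic problem is used, so that $\pic(E_n) \simeq \pic(\pi_* E_n)$ is already known.

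The main obstacle, and the technical heart of the argument, will be ruling out "exotic" elements in the top filtration of \eqref{eq:picss} — precisely the classes lying just outside the comparison range of \Cref{lem:diffimport}, where the first nontrivial discrepancy between the Picard and additive differentials is governed by power operations (the formula promised in \cref{topunivformula}). I would need to show that the relevant potential exotic class, living in filtration $s = t$ for the smallest $t$ where the comparison breaks, either supports a nontrivial differential or is hit by one; the power-operation correction term must be shown not to obstruct the pattern established additively. Concretely, at height $n = p-1$ the relevant stable homotopy of $E_n^{hG}$ is sparse enough (the homotopy of $EO_{p-1}$ is $2(p^2-1)$-periodic with a known, relatively simple pattern) that the first place an exotic element could appear is pinned down, and one checks by hand that the target or source of the needed differential is nonzero, using the multiplicative structure and the comparison tool just below the critical range. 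The remaining bookkeeping — assembling the associated graded of $\pi_0$ of \eqref{eq:picss}, checking there are no extension problems that would make the group noncyclic, and confirming the generator is $\Sigma E_n^{hG}$ (which maps to the filtration-$1$ class $1 \in \pi_1 \pics(E_n)$ in the appropriate sense) — is then routine given the explicit form of the spectral sequence.
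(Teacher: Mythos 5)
Your overall architecture (Galois descent via \Cref{galoispicdesc}, comparison with the additive spectral sequence via \Cref{lem:diffimport}, and a power-operation correction at the edge of the comparison range via \Cref{topunivformula}) matches the paper, but there are concrete gaps. The most serious is your reduction step: you propose to reduce ``to the case where $G$ is a maximal finite subgroup,'' but the transfer argument runs in the opposite direction. For $G_q \subseteq G$ a $q$-Sylow subgroup, base change $\pic(E^{hG})_q \to \pic(E^{hG_q})_q$ is injective because the composite $X^{hG} \to X^{hG_q} \xr{\mathrm{tr}} X^{hG}$ is an equivalence on $q$-local spectra; there is no analogous injection into the Picard group of the fixed points of a \emph{larger} group. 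The paper therefore reduces downward, first eliminating the Galois piece ($G \rightsquigarrow G_\S$) and then passing to the $p$-Sylow, which at height $p-1$ is either trivial or $C_p$. The essential case is thus $G = C_p$, which is \emph{not} maximal, and this matters: for the maximal finite subgroup the $E_2$-term consists of copies of $\F_p$ and the result follows without \Cref{topunivformula} (this is Hopkins's unpublished argument, sketched in \Cref{sec:maxfinite}), whereas for $C_p$ the relevant groups are $\F_{p^n}$ and the new machinery is genuinely needed.

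Two further points. First, you suggest the unit-group cohomology $H^1(G,(E_n)_0^\times)$ ``vanishes or is absorbed'' by regularity of $\pi_0 E_n$. Regularity only controls $\pic(E_n)$ itself (the $s=0$ row); the group $H^1(C_p,(E_n)_0^\times)$ is $\Z/p$, not zero, and computing it is a substantial piece of the paper (\Cref{prop:algebraicterm}, via the $\mathfrak{m}$-adic filtration and truncated logarithms). Second, you frame the power-operation correction as something to be ``shown not to obstruct the pattern established additively,'' but the mechanism is the reverse: the class $e_1 = \alpha\beta^{p-1}\delta^{n-1-(pn-1)}$ is \emph{killed} in the additive spectral sequence by $d_{2n^2+1}$, and the point is that the multiplicative differential $a e_1 \mapsto (a\xi + a^p\zeta\xi')f_t$ differs from the additive one by a Frobenius-semilinear term, so its kernel has order at most $p$ rather than $0$ or $p^n$. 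This cuts the potential contribution of $\F_{p^n}e_1$ down to at most $\Z/p$, giving the upper bound $2\cdot p\cdot p = 2p^2$, which is then matched against the lower bound coming from the $2p^2$-periodicity of $\pi_*E^{hC_p}$ (\Cref{rem:periodicity}); it is this coincidence of bounds, not direct bookkeeping of extensions, that forces cyclicity with generator $\Sigma E^{hC_p}$.
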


\newtheorem*{remark}{Remark}
\begin{rem} 
The theorem is also true when $n=1,p=2$, as can be shown using the same argument as given for $\pic(KO)$ in \cite{mathew_stojanoska_picard}.
\end{rem}
When $G \subset \G_{p-1}$ is a \emph{maximal} finite subgroup, the above result
is due in unpublished work to Hopkins. 
In this case, the result can be proved without the new technical tools we
develop in the paper. However, when (for example) $G = C_p \subset \G_{p-1}$, our formula for
the first differential is actually necessary to give an upper bound to the order of the Picard group of
$E_n^{hG}$. We note that Hopkins has informally observed that in most known examples, the
Picard groups of these types of constructions tend to be cyclic. 
Thus, our main result can be viewed as a further example of 
Hopkins's observation. 

\subsection{Connections with chromatic homotopy}
Let $K(n)$ be the $n$th Morava $K$-theory at an implicit prime $p$; it is a complex orientable spectrum with homotopy groups $\pi_* K(n) = \F_p[v_n^{\pm 1}]$ (where $|v_n| = 2(p^n-1)$), and its associated formal group law is the Honda formal group law.
It is an insight of
Hopkins~\cite{hms_94} that the Picard group $\pic_n$ of the $K(n)$-local category itself
is quite large and interesting to study.  

Here the calculation of $\pic_n$ essentially
breaks into two parts (which arise via the $K(n)$-local profinite Galois
extension $E_n^{\G_n} \to E_n$); there is an algebraic part, which can be
detected by cohomological methods, and an exotic part $\kappa_n$. The latter consists of the $K(n)$-local spectra such that the completed $E_n$-homology of $X$, $(E_n)^\vee_*X = \pi_*L_{K(n)}(E_n \wedge X)$ is isomorphic to $ (E_n)_*$ as Morava modules, i.e., as complete $(E_n)_*$-modules with a compatible action of $\G_n$ (see~\cite[Sec.~2]{ghmr_paper}). This latter part always vanishes when $p \gg n$, but can be highly non-trivial otherwise. 

Let us write $\pic_n^{\text{alg}}$ for the algebraic part\footnote{i.e. the Picard group of $\G_n$-equivariant $(E_n)_*$-modules.}, and $\kappa_n$ for the exotic part. 
In all \emph{known} cases the map $\pic_n \to \pic_n^{\text{alg}}$ is surjective (although it is not known if this is true in general), and there is a short exact sequence
\[
1 \to \kappa_n \to \pic_n \to \pic_n^{\text{alg}} \to 1. 
\] 
Moreover the generators of $\kappa_n$ and $\pic_n^{\text{alg}}$ are understood
well enough to determine the extension problem, so that the following results
completely determine $\pic_n$. However, the list of known cases is not very
long, and consists of the following. The height~1 case is due to
Hopkins--Mahowald--Sadofsky~\cite{hms_94}, and the height~2 case has many
contributions from a number of people: Hopkins for $p>3$ (unpublished, but
see~\cite{behrens_rev,lader}), Goerss--Henn--Mahowald--Rezk~\cite{ghmr_pic} for
$\kappa_2$ at $p=3$, and Karamanov~\cite{karamanov_pic} for the algebraic
Picard group $\pic_2^{\text{alg}}$ at $p=3$. The height~2 case for $p=2$ is the  subject of ongoing work of Beaudry--Bobkova--Goerss--Henn.
\begin{theorem}\label{thm:pickn}
	The Picard group of the $K(n)$-local category is given by:
	\[
\pic_1^{\text{alg}} = \begin{cases}
	\Z_2 \times \Z/2 \times \Z/2 & p=2 \\
	\Z_p \times \Z/2(p-1) & p>2. 
\end{cases}
\qquad 
\kappa_1 = \begin{cases}
	\Z/2& p=2 \\
	0 & p>2. 
\end{cases}
	\]
	\[
\pic_2^{\text{alg}} = \begin{cases}
	? & p=2 \\
	\Z_3^2 \times \Z/16 & p=3 \\
	\Z_p^2 \times \Z/2(p^2-1) & p>3. 
\end{cases}
\qquad 
\kappa_2 = \begin{cases}
	? & p=2 \\
	\Z/3 \times \Z/3 & p=3 \\
	0 & p>3.  
\end{cases}
	\]

\end{theorem}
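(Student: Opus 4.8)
The plan is to assemble this statement from the literature rather than to prove it ab initio: each clause is a theorem (or a small package of theorems) already in print or announced, and the work lies in organizing them around the short exact sequence
\[
1 \to \kappa_n \to \pic_n \to \pic_n^{\text{alg}} \to 1
\]
recorded above. That sequence is available precisely because the map $\pic_n \to \pic_n^{\text{alg}}$ is known to be surjective in each of the cases at hand, so for a fixed pair $(n,p)$ it suffices to (i) compute the algebraic Picard group $\pic_n^{\text{alg}}$, (ii) compute the exotic subgroup $\kappa_n$, and (iii) pin down the resulting extension by locating generators.

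For step (i): $\pic_n^{\text{alg}}$ is the Picard group of $\G_n$-equivariant complete $(E_n)_*$-modules (Morava modules), which one computes by a descent spectral sequence built from the Picard spectrum of $(E_n)_*$, the profinite analogue of \Cref{galoispicdesc}. Since $\pi_*(E_n)$ is regular this collapses to an extension relating $\pic_n^{\text{alg}}$ to the continuous cohomology group $H^1_c(\G_n,(E_n)_0^\times)$ and to the parity of the internal grading. At $n=1$ one has $\G_1 = \Z_p^\times$ acting on $(E_1)_0$, and the computation is elementary continuous group cohomology (Hopkins--Mahowald--Sadofsky, \cite{hms_94}). At $n=2$ and $p>3$ the group $\G_2$ has no $p$-torsion, its cohomology with $(E_2)_*$-coefficients is concentrated in a narrow range, and the computation is again essentially algebraic (Hopkins, unpublished; see \cite{behrens_rev,lader}). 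At $n=2$, $p=3$ the $p$-torsion in $\G_2$ makes $H^*_c(\G_2,(E_2)_*)$ substantially more intricate, and the identification $\pic_2^{\text{alg}} = \Z_3^2 \times \Z/16$ is Karamanov's theorem \cite{karamanov_pic}; the analogous, much shorter, analysis at $n=1$, $p=2$ gives $\pic_1^{\text{alg}} = \Z_2 \times \Z/2 \times \Z/2$, again from \cite{hms_94}.

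For step (ii): by definition an element of $\kappa_n$ is one whose image in $\pic_n^{\text{alg}}$ is trivial, i.e. one detected in higher filtration in the $K(n)$-local Picard descent spectral sequence; so $\kappa_n$ is governed by that spectral sequence's contributions above the algebraic range. A sparseness/vanishing-line argument forces $\kappa_n = 0$ as soon as $p$ is large relative to $n$, which disposes of $n=1$ with $p>2$ and of $n=2$ with $p>3$. The remaining case $\kappa_2 = \Z/3 \times \Z/3$ at $n=2$, $p=3$ is the work of Goerss--Henn--Mahowald--Rezk \cite{ghmr_pic}, who use the algebraic (duality) resolution of the $K(2)$-local sphere to get hold of the relevant $E_n$-cohomology; at $n=1$, $p=2$ the corresponding easier argument gives $\kappa_1 = \Z/2$ from \cite{hms_94}. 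Once generators for $\kappa_n$ and for $\pic_n^{\text{alg}}$ are named, step (iii) is settled by direct inspection.

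The genuine difficulty is entirely concentrated at the prime $p=3$ in height $2$: both Karamanov's algebraic computation and the exotic computation of Goerss--Henn--Mahowald--Rezk rest on detailed control of $H^*_c(\G_2,(E_2)_*)$ together with the duality resolution, and that is where all the work is. (The large-prime cases are forced by sparseness and the height-$1$ cases are elementary.) The entries left as ``?'' --- namely $\pic_2^{\text{alg}}$ and $\kappa_2$ at $p=2$ --- are not claimed here, as their determination is the subject of ongoing work of Beaudry--Bobkova--Goerss--Henn.
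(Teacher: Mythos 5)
Your proposal is correct and matches the paper's treatment: this theorem is a survey of known results, and the paper likewise ``proves'' it purely by citation --- Hopkins--Mahowald--Sadofsky for height $1$, Hopkins (via Behrens and Lader) for height $2$ at $p>3$, Karamanov for $\pic_2^{\text{alg}}$ and Goerss--Henn--Mahowald--Rezk for $\kappa_2$ at $p=3$, with the $p=2$, height $2$ entries deferred to Beaudry--Bobkova--Goerss--Henn. Your organization around the extension $1 \to \kappa_n \to \pic_n \to \pic_n^{\text{alg}} \to 1$ and the attribution of each clause agree with the paper.
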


It is known that $\Z_p$ always injects into $\pic_n$. However, apart from the vanishing of $\kappa_n$ for large primes, essentially nothing is known above height 2. However, the program of Henn and collaborators \cite{henn_finite, ghmr_paper,agnes_duality,bobkova_thesis} entails studying the $K(n)$-local sphere $S_{K(n)} = E_n^{\G_n}$ via finite resolutions by simpler homotopy fixed point spectra, namely $E_n^{hG}$ with $G$ a finite subgroup of $\G_n$. Once such a resolution of the $K(n)$-local sphere is understood, one can, in principle, build resolutions of any element of $\kappa_n$ by modifying the attaching maps in the resolution of the sphere \cite{ghmr_pic}. To proceed with this program, a first step is to understand the basic building blocks, i.e. $\pic(E_n^{hG})$.

Let $\kappa_{p-1}(G)$ denote the elements of $\pic(E_{p-1}^{hG})$ that are
detected in filtration $\ge 2$ in the Picard spectral sequence \eqref{eq:picss}
(in fact we will see that, by sparsity, such classes will only be in filtration
$\ge 2p-1$).
Our calculations show that $\kappa_{p-1}(G) \cong \Z/p$, whenever
$G$ contains an element of order $p$. 
One can check that base-change gives a well-defined morphism $\phi\colon\kappa_{p-1}
\to \kappa_{p-1}(G)$ which, in the cases of $n=1,\, p=2$ and $n=2, \, p=3$, and $G$ the maximal finite subgroup, is known to be split surjective, cf.~\cite[Sec.~8.5]{mathew_stojanoska_picard}, \cite[Ex.~5.1 and Thm.~5.5]{ghmr_pic}. This leads to the following question:
\begin{question}
	Is the map $\phi\colon\kappa_{p-1} \to \kappa_{p-1}(G) \simeq \Z/p$ non-zero for any $G$ containing an element of order $p$? 
\end{question}

 \subsection{Organization} 
The paper is arranged as follows.  In the next section we review the basic
properties of Morava $E$-theory, the classification of the maximal finite
subgroups of $\G_n$, and recall some information about the action of these
finite subgroups on $(E_{p-1})_*$. In \Cref{sec:lemmas} we prove some reduction results at general
height, in particular showing that one can reduce the proof of cyclicity of $\pic(E_n^{hG})$ to that of $\pic(E_n^{hG'})$, where $G'$ is the the maximal $p$-subgroup of $G$; we then perform the main computation of this paper in \Cref{sec:calcs}. This will rely on general techniques for comparing differentials in the additive and multiplicative algebraic and topological spectral sequences, which are described in the second part of the paper. More precisely, in \Cref{algTruncLogsec}, we analyze truncated logarithms to give comparison results and a universal formula for the differentials in the Picard spectral sequence, and in \Cref{sec:algPic} we use the techniques of \Cref{algTruncLogsec} to determine the algebraic Picard group $H^1(C_p, (E_{p-1})_0^\times)$.

\subsection*{Acknowledgments} 
We are grateful to Paul Goerss, Hans-Werner Henn, and Mike Hill for extremely helpful discussions.

The first and third authors thank the Max--Planck Institute for Mathematics in Bonn for its support and hospitality.   
The second author thanks the Hausdorff
Institute of Mathematics in Bonn for hospitality during the time which much of this
research was conducted. The second author was partially supported by the NSF
Graduate Research Fellowship under grant DGE-1144152.
The third author was partially supported by NSF grant DMS-1307390.

\part{Generalities and computations}

\section{Finite subgroups of $\G_n$ and the additive spectral sequence}\label{sec:finsubgroups}

In this section, we recall the necessary information about the action of finite
subgroups of $\G_n$ on $E_n$, particularly in the case $n=p-1$. We will,
in particular, recall the computation of the associated homotopy fixed point
spectral sequence for the homotopy of the higher real $K$-theories and their
variants for non-maximal subgroups. 
In the sequel, only the result of the computation  for
the group $C_p$ will be used.

\subsection{The Morava stabilizer group}\label{sec:stabgroup}
We start this section by reviewing more thoroughly the Morava stabilizer group $\S_n$, as well as the extended Morava stabilizer group $\G_n$.  We use 
\cite[Ch. III]{Frohlich68} as a general reference. 
Recall from the introduction that $\S_n$ is the automorphism group of the
height $n$ Honda formal group law $\Gamma_n$ over $\F_{p^n}$; this is the formal group law $F_{\Gamma_n}(X, Y) \in
\mathbb{F}_{p^n}[\![X, Y]\!]$ with $p$-series $[p]_{\Gamma_n}(X) = X^{p^n}$. 

\begin{definition} 
The group $\mathbb{S}_n$ of \emph{automorphisms} is given by those power series
$f(X) \in \mathbb{F}_{p^n}[\![X]\!]$ such that: 
\begin{enumerate}
\item $f(0) = 0$ (i.e., $f$ has no constant term).
\item $f'(0) \neq 0  \in \mathbb{F}_{p^n}$. 
\item $F_{\Gamma_n}( f(X), f(Y)) = f( F_{\Gamma_n}(X, Y))$.
\end{enumerate}
Observe that the group structure of $\mathbb{S}_n$ is given by composition of
power series. 
The group $\mathbb{S}_n$ has a topology induced from the $X$-adic topology, and
is a profinite group with respect to this topology. 
\end{definition} 

We have a homomorphism
\begin{equation}  \label{homomorphism} \mathbb{S}_n \to \mathbb{F}_{p^n}^{\times}, \quad f(X) \mapsto
f'(0).  \end{equation}
More invariantly, this homomorphism sends an automorphism $f$ of the formal
group $F$ to its action on the \emph{tangent space} of $F$ (dual to the
\emph{Lie algebra}). 
This homomorphism is surjective, and its kernel (which consists of
automorphisms that are congruent to the identity modulo $X^2$) is a pro-$p$-group. 

The group $\S_n$ can be described as the units in the maximal order $\cal{O}_n$
of the central division algebra $\mathbb{D}_n$ of Hasse invariant $1/n$ over $\Q_p$. More explicitly, we have that $\cal{O}_n$ is the non-commutative ring
\[
\cal{O}_n = \W(\F_{p^n})\langle S \rangle /(S^n=p,Sw = w^\phi S),
\]
where $w \in \W(\F_{p^n})$ and $\phi$ is a lift of the Frobenius to
$\W(\F_{p^n})$. Then $\cal{O}_n$ is the endomorphism ring of the formal group
$\Gamma_n$, and $\S_n = \cal{O}_n^\times$. 

Finally, because $\Gamma_n$ is already defined over $\F_p$, the Galois group $\Gal(\F_{p^n}/\F_p)$ acts on $\S_n$, and we define the extended Morava stabilizer group (which we will often just call the Morava stabilizer group) as the semidirect product $\S_n \rtimes \Gal(\F_{p^n}/\F_p)$, i.e., there is a split extension
\begin{equation}\label{eq:gnexactsequnce}
1 \to \S_n \to \G_n \to \Gal(\F_{p^n}/\F_p) \to 1. 
\end{equation}

\subsection{The finite subgroups of $\G_n$}
Before proceeding with the calculations, let us recall the classification of the maximal finite subgroups of $\G_n$, as well as their action on the coefficients of $E_n$. Bujard and Hewett~\cite{hewett_finite,bujard_finite} have determined the maximal finite subgroups of $\S_n \subset \G_n$. Specifically, let $p>2$ and $n=(p-1)p^{k-1}m$ for $m$ prime to $p$, and denote by $n_\alpha$ the quotient $n/\phi(p^\alpha)$, where $\phi$ is Euler's totient function. 
	There are exactly $k+1$ conjugacy classes of maximal finite subgroups of $\S_n$, represented by
\[
G_0 = C_{p^n-1} \quad \text{ and } \quad G_\alpha = C_{p^\alpha} \rtimes C_{(p^{n_\alpha}-1)(p-1)} \quad \text{ for } 1 \le \alpha \le k. 
\]
When $p-1$ does not divide $n$, the only class of maximal finite subgroups is that of $G_0$, so in particular, there are no finite subgroups of order divisible by $p$. 

\begin{rem}\label{rem:Sn}
Given the description of $\S_n$ in~\Cref{sec:stabgroup}, we note that an element of order dividing $p^n-1$ is always easy to describe: it is given by a Teichm\"uller lift $\eta$ of a primitive $(p^n-1)$st root of unity, $\eta \in \W(\F_{p^n})^\times \subset \S_n$. 
\end{rem}

Consider now the special case when $n = p-1$. For convenience, we drop the
subscripts after $\G, \S$. 
In this situation, there are two conjugacy classes of maximal finite subgroups of $\S$: 
\begin{enumerate}
\item 
One of order not divisible by $p $, namely the cyclic group $C_{p^n-1}$. 

\item  One of order divisible by $p$, namely the semi-direct product $F = C_p \rtimes C_{({p-1})^2}$. In the latter case, the action of $C_{({p-1})^2}$ on $C_p$ is given by a projection
\[
C_{{(p-1)}^2} \to C_{p-1} \simeq \operatorname{Aut}(C_p).
\] 
\end{enumerate} 

\begin{rem}
Using the description of $\S_n$ in~\Cref{sec:stabgroup}, we can give an
explicit equation of an order $p$ element when $p=3$. Let $\omega$ be a
primitive \nth{8} root of unity in $\W(\F_{9})$, then we can take $ -\frac 12 (1+ \omega S)$. In general, however, it is difficult or impossible to give a similar expression with rational coefficients for an element of order $p$ when $n>2$, see~\cite[Rem.~5.3]{gms}. 
\end{rem}

Finally, we recall from \eqref{eq:gnexactsequnce} that the extended Morava stabilizer group $\G$ sits in a split extension
\[ 1 \to \S \to \G \to \Gal(\F_{p^n}/\F_p) \to 1, \]
and therefore, any finite subgroup $G$ of $\G$ sits in a (not necessarily split) extension
\begin{align}\label{eq:Gdecomposition}
 1 \to G_{\S} \to G \to G_{\Gal} \to 1,
 \end{align}
where $G_{\S} = G\cap \S$, and $G_{\Gal} $ is a subgroup of $\Gal(\F_{p^n}/\F_p) \cong C_{n}$. 

In the case when $G_{\S}$ is a maximal abelian subgroup of $\S $, Bujard \cite[Thm.~4.13]{bujard_finite} has determined that $G_{\Gal}$ can be maximally large, i.e. it is always possible that $G_{\Gal} \cong \Gal(\F_{p^n}/\F_p)$. This is also known to be true when $G_\S$ is a maximal finite subgroup of $\S$, cf. \cite[Prop.~20]{henn_finite}. 

The proof of \Cref{MainTheorem} for $G$ will consist of a reduction first from $G$ to its subgroup $G_{\S}$, and then further to the $p$-Sylow subgroup of $G_{\S}$. 

\subsection{The action on $E_*$}

The first explicit description of the action of $\G$ on $E_*$ appears in unpublished work of Hopkins and Miller~\cite{hopkins_miller}. The general situation for $\G$ acting on $E_*$ is determined by Devinatz and Hopkins~\cite{dev_hop_action}.\footnote{Both of these sources treat general $p$ and $n$.} 
We focus on the action of $\S$. For our purposes, a nice summary of what we need can be found in the work of Nave~\cite[Sec.~2]{nave_smith_toda}. 

Write an element $g \in \S$ as $g = \sum_{j=0}^{n-1}a_j S^j$, where $a_j \in
\W(\F_{p^n})$ and $a_0$ is a unit. Recall that $u \in \pi_{-2} E_n$ is a
generator. The next result follows from \cite[Prop.~3.3 and Thm.~4.4]{dev_hop_action}.

\begin{theorem}
Let $\phi$ denote the lift of the Frobenius to $\W( \mathbb{F}_{p^n})$. Then if $g
= \sum_{j=0}^{n-1} a_j S^j$ as above, we
have: 
\begin{gather}
g_*(u) \equiv a_0u + a_{n-1}^\phi u u_1 + \cdots + a_1^{\phi^{n-1}} uu_{n-1}
\mod (p,\frak{m}^2). \\
g_*(uu_i) \equiv a_0^{\phi^i} uu_i + \cdots + a_{i-1}^{\phi}uu_1 \mod (p,\frak{m}^2).
\end{gather}
\end{theorem}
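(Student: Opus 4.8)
The plan is to deduce both congruences from the explicit description of the $\S$-action on $E_*$ obtained by Devinatz and Hopkins in \cite{dev_hop_action}, by reducing their formulas modulo $(p,\frak{m}^2)$; since those formulas are written in somewhat different notation, the bulk of the work is a careful translation of conventions, for which I would use Nave's summary \cite[Sec.~2]{nave_smith_toda} as an intermediate reference. I would begin by recalling the conceptual source of the action. By Lubin--Tate theory, $(E_n)_0 = \W(\F_{p^n})[\![u_1,\dots,u_{n-1}]\!]$ pro-represents deformations of the Honda formal group $\Gamma_n$, an element $g \in \S = \operatorname{Aut}(\Gamma_n)$ acts on $(E_n)_0$ by composing with $g$ on the reduction datum of the universal deformation, and $\S$ acts on the periodicity class $u \in \pi_{-2}E_n$ through the induced action on the cotangent space of the universal deformation together with the accompanying change of coordinate. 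This already pins down the leading terms: writing $g = \sum_{j=0}^{n-1} a_j S^j$ relative to the presentation $\cal{O}_n = \W(\F_{p^n})\langle S\rangle / (S^n = p,\ Sw = w^\phi S)$, the unit $a_0$ is (the reduction mod $p$ of) the image of $g$ under the tangent-space character \eqref{homomorphism}, which forces $g_*(u) \equiv a_0 u$ and $g_*(uu_i) \equiv a_0^{\phi^i} uu_i$ to zeroth order in the $u_k$.

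Next I would read off the first-order corrections from \cite[Prop.~3.3 and Thm.~4.4]{dev_hop_action}: Proposition~3.3 there gives the action of $g$ on a convenient set of generators, and Theorem~4.4 computes it modulo $(p,\frak{m}^2)$ — with $\frak{m} = (u_1,\dots,u_{n-1})$ — in terms of the Frobenius twists $a_\ell^{\phi^k}$ of the coefficients of $g$. The presence of these twists is forced by the commutation relation $Sw = w^\phi S$ in $\cal{O}_n$ together with the fact that, to first order, $S$ acts on the universal deformation through a Frobenius-semilinear map, so that iterating $S^j$ produces the exponent $\phi^j$. Matching the indexing of the uniformizer and of the parameters $u_1,\dots,u_{n-1}$ with those of \cite{dev_hop_action}, and then truncating modulo $(p,\frak{m}^2)$ so that every monomial $u_ku_l$ and every multiple of $p$ is killed, exactly the displayed linear terms survive, and one obtains $g_*(u) \equiv a_0 u + a_{n-1}^\phi uu_1 + \cdots + a_1^{\phi^{n-1}} uu_{n-1}$ and $g_*(uu_i) \equiv a_0^{\phi^i} uu_i + \cdots + a_{i-1}^{\phi} uu_1$, as claimed.

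The step I expect to be the main obstacle is precisely this bookkeeping of conventions: \cite{dev_hop_action}, \cite{nave_smith_toda}, and the present paper normalize the generator $S$ of $\cal{O}_n$ and the deformation coordinates slightly differently, so one must check that the exponents on $\phi$ and the pairing of each $a_j$ with the correct $u_i$ come out exactly as stated, and that the truncation ``$\bmod (p,\frak{m}^2)$'' is applied only at the very end rather than partway through the computation. There is no genuinely new mathematical content beyond \cite{dev_hop_action}; the statement is recorded in this linearized form only because it is all that the later arguments require — indeed, as the text notes, only the formula for $g_*(uu_i)$, and only the case $n = p-1$, will actually be used in \Cref{sec:calcs}.
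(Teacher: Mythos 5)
Your proposal is correct and matches the paper's treatment: the paper offers no independent proof of this statement, but simply records it as following from \cite[Prop.~3.3 and Thm.~4.4]{dev_hop_action}, which is exactly the derivation (translate conventions, reduce modulo $(p,\frak{m}^2)$) that you outline. The additional Lubin--Tate context and the tangent-space explanation of the leading coefficient $a_0$ are consistent with \eqref{homomorphism} and with how the paper uses the result, but they go beyond what the paper itself writes down.
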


In the case of elements $g$ of finite order, we can be much more explicit. By the classification of finite subgroups of $\S$, such a $g$ either has order equal to $p$, or prime to $p$. 
\begin{enumerate}
\item 
If $g$ has order $p$, it is known~\cite[Lem.~2.2]{nave_smith_toda} that $a_0 \equiv 1 \mod (p)$ and $a_1$ is a unit. 
\item 
If $g$ has order prime to $p$, $g$ is a Teichm\"uller lift of $a \in \F_{p^n}^\times \subseteq \W^\times \subseteq \S$. Then it follows from~\cite[Eq.~8]{henn_finite} that the induced map of rings
\(
g_*\colon E_* \to E_*
\)
is given by
\begin{equation}\label{eq:actiona}
g_*(u) = au \quad \text{ and } g_*(u_i) = a^{p^i-1}u_i.
\end{equation}
\end{enumerate}



\subsection{The HFPSS}\label{sec:additivess}
Because the action of finite subgroups on $E_*$ is well understood in our case
of $n=p-1$ (which we implicitly assume in this subsection), so is the computation of the associated group cohomology. In fact, the differentials in the homotopy fixed point spectral sequence are also known. The calculations are due to Hopkins and Miller, although their work has not appeared in print. Published references include the thesis and subsequent paper of Nave~\cite{nave_thesis, nave_smith_toda}, or for a detailed computation at $n=2,p=3$, the work of Goerss--Henn--Mahowald and Goerss--Henn--Mahowald--Rezk~\cite{ghm_skye_paper,ghmr_paper}. 

We start by considering the prime-to-$p$ case. 
\begin{example} 
If $p$ does not divide the order of $G$, then the higher cohomology of $G$ with coefficients in $E_*$ vanishes, giving that
\[ (E^{hG})_* =  (E_*)^G.\]
Using \eqref{eq:actiona}, we can (at least in principle) determine these
invariants. In particular, we see that if $m$ is the order of $G_{\S}$, then
$u^m$ is $G_{\S}$-invariant and is a minimal periodicity element of $(E_*)^{G_{\S}}$. But this element is also fixed under the Galois quotient of $G$, so $E^{hG}$ has periodicity $2m=2|G_{\S}|$.
\end{example}

The most interesting case for us is when $G = C_p$. In this case, one has the
following calculation of the $E_2$-page modulo transfers. 
\begin{prop}[Hopkins--Miller]\label{prop:e2cals}
	There is an exact sequence
	\[
	\begin{split}
E_*\xr{\text{tr}} H^*(C_p,E_*) \to \F_{p^n}[\alpha,\beta,\delta^{\pm 1}]/(\alpha^2) \to 0,
	\end{split}
	\]
	of graded groups, where the $(s,t)$-bidegrees are $|\alpha| = (1,2n),|\beta| = (2,2pn),|\delta| = (0,2p)$. 
	
\end{prop}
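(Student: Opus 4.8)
The transfer $\operatorname{tr}\colon E_* = H^*(\{e\}, E_*) \to H^*(C_p, E_*)$ is concentrated in cohomological degree $0$, where it is the norm $\operatorname{Nm} = 1 + g + \cdots + g^{p-1}\colon E_* \to E_*^{C_p} = H^0(C_p, E_*)$ for a generator $g$ of $C_p$. So the asserted exact sequence amounts to two statements: that $H^0(C_p, E_*)/\operatorname{Nm}(E_*)$ is the part $\F_{p^n}[\delta^{\pm1}]$ of the target sitting in internal degrees $\equiv 0 \bmod 2p$, and that for $i \ge 1$ the group $H^i(C_p, E_*)$ is the cohomological degree-$i$ part of the target. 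Since $H^i(C_p, E_*) = \widehat H^i(C_p, E_*)$ for $i \ge 1$, since $H^0(C_p, E_*)/\operatorname{Nm}(E_*) = \widehat H^0(C_p, E_*)$, and since the Tate cohomology of $C_p$ is $2$-periodic, the plan is to reduce the whole statement to computing the two bigraded $\F_{p^n}$-modules $\widehat H^0(C_p, E_*)$ and $\widehat H^1(C_p, E_*)$ and checking that they equal $\F_{p^n}[\delta^{\pm1}]$ (with $\delta$ in internal degree $2p$) and $\F_{p^n}[\delta^{\pm1}]\alpha$ (with $\alpha$ in internal degree $2n$). The relation $\alpha^2 = 0$ is then automatic since $p$ is odd, and the degree-$(2,2pn)$ generator $\beta$ of $\widehat H^2$ over $\F_{p^n}[\delta^{\pm1}]$ records the $2$-periodicity.

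To carry out the computation I would filter $E_* = \W(\F_{p^n})[\![u_1,\dots,u_{n-1}]\!][u^{\pm1}]$ by the powers of $\mathfrak m = (p, u_1,\dots,u_{n-1})$ --- a $C_p$-stable, degreewise complete filtration --- and study the resulting spectral sequence $\widehat H^*(C_p, \operatorname{gr} E_*) \Rightarrow \widehat H^*(C_p, E_*)$. The crucial input is the Devinatz--Hopkins action formula recalled above, together with the fact that a generator $g$ of $C_p$ has $a_0 \equiv 1 \bmod p$ and $a_1$ a unit: modulo $\mathfrak m$ the action on $\F_{p^n}[u^{\pm1}]$ is trivial, so $\widehat H^*(C_p, E_*/\mathfrak m) \cong \F_{p^n}[u^{\pm1}] \otimes_{\F_p} \widehat H^*(C_p;\F_p)$, while the formulas for $g_*u$ and $g_*(uu_i)$ modulo $(p, \mathfrak m^2)$ pin down the leading term of the action on the next filtration quotients. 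One then shows that the induced differentials are as small as they can be: they wipe out all of this first approximation except for the periodicity class $\delta$ (detected by $u^{-p}$), an exterior class $\alpha$ (detected by $u^{-n}\alpha_0$), and a polynomial generator $\beta$ (detected by $u^{-pn}\beta_0$), with only the evident extensions in the abutment. Equivalently --- and this is essentially the original Hopkins--Miller argument, cf.~\cite{nave_smith_toda} --- one can bypass the spectral sequence and compute the four groups $E_*^{C_p}$, $\operatorname{Nm}(E_*)$, $\ker(\operatorname{Nm})$ and $(g-1)E_*$ directly from the explicit action.

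The main obstacle is precisely this collapse: one must rule out an answer as large as $\F_{p^n}[u^{\pm1}] \otimes \widehat H^*(C_p;\F_p)$ and pin down exactly which powers of $u$ survive in each cohomological degree. This requires knowing the $C_p$-action --- or choosing coordinates on $E_*$ in which one can read it off --- to higher $\mathfrak m$-adic order than the displayed formula provides, careful bookkeeping, and a check of convergence of the filtration on the (degreewise complete) power-series ring. Accordingly I would organize the proof as: (i) the reduction of $H^*(C_p,E_*)/\operatorname{im}(\operatorname{tr})$ to Tate cohomology; (ii) triviality of the action modulo $\mathfrak m$ and the resulting first approximation; (iii) the filtration spectral sequence and the identification of its differentials; (iv) the identification of $\alpha$, $\beta$, $\delta$ and the assembly of the exact sequence, the kernel of the surjection $H^*(C_p,E_*) \twoheadrightarrow \F_{p^n}[\alpha,\beta,\delta^{\pm1}]/(\alpha^2)$ being exactly $\operatorname{im}(\operatorname{tr})$.
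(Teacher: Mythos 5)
First, a point of comparison: the paper does not prove this proposition at all. It is quoted as an unpublished calculation of Hopkins--Miller, with the published accounts in Nave's thesis and paper and in Goerss--Henn--Mahowald(--Rezk) cited as references; indeed, in \Cref{sec:algPic} the authors go the other way and use the size of $H^*(C_p,(E_n)_0)$ asserted here to \emph{reverse-engineer} the differential $d_{p-1}(v')\dequal bu$ in their $\mathfrak{m}$-adic filtration spectral sequence, rather than deriving that differential independently. So there is no in-paper argument to measure your proposal against, and it has to stand on its own as a proof.

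As such, it has a genuine gap. Your framing is correct: the transfer is the norm concentrated in degree $0$, so the statement reduces to computing the $2$-periodic Tate cohomology $\widehat{H}^0$ and $\widehat{H}^1$; the $\mathfrak{m}$-adic filtration is the standard device; and the inputs you would feed it ($\mathfrak{m}/\mathfrak{m}^2\cong\overline{\rho}$ as a $C_p$-representation, hence $\mathrm{gr}$ of the degree-zero part is $\sym^*\overline{\rho}$, together with Almkvist--Fossum for the cohomology of those symmetric powers) are exactly the ones the paper assembles in \Cref{sec:algPic}. But the entire content of the proposition is the collapse you defer with ``one then shows that the induced differentials \dots wipe out all of this first approximation except for'' $\delta$, $\alpha$, $\beta$: pinning down those differentials requires the action to higher $\mathfrak{m}$-adic order than the displayed Devinatz--Hopkins congruences provide, as you yourself note, and no mechanism for obtaining that finer information is supplied. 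Moreover your ``first approximation'' $\F_{p^n}[u^{\pm1}]\otimes\widehat{H}^*(C_p;\F_p)$ is only the filtration-zero column of the $E_1$-page; the columns $j\equiv 0,1\bmod p$ contribute further nontrivial Tate cohomology through the $\mathbf{1}$ and $\overline{\rho}$ summands of $\sym^j\overline{\rho}$, so the module that must be cut down to $\F_{p^n}[\alpha,\beta,\delta^{\pm1}]/(\alpha^2)$ is larger, and the bookkeeping correspondingly harder, than the sketch suggests. A complete argument would either carry out this analysis in full (as Nave does) or simply cite Hopkins--Miller and the published references, which is what the paper in fact does.
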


	We note that because we have not taken Galois invariants anywhere, we see
	copies of $\mathbb{F}_{p^n}$ (rather than $\mathbb{F}_p$) in the above
	spectral sequence. The passage to $\F_p$ can be achieved by adding in the Galois group. For
	example, we also state the following result of Hopkins--Miller.  
	\begin{prop}[Hopkins--Miller]
When $G \subset \G$ is a maximal finite subgroup containing $p$-torsion, then there is an exact sequence
\[E_*\xr{\text{tr}} H^*(G,E_*) \to \F_{p}[\alpha,\beta,\Delta^{\pm 1}]/(\alpha^2),\]
with $|\Delta|=(0,2pn^2)$ and $\alpha, \beta$ as above.
\end{prop}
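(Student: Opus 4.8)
The plan is to bootstrap from the $C_p$-computation of \Cref{prop:e2cals} by passing to invariants under the prime-to-$p$ quotient $Q := G/C_p$. First I would note that $C_p$ is a \emph{normal} subgroup of $G$: by the classification recalled above, a maximal finite $G \subset \G$ with $p$-torsion sits in an extension $1 \to F \to G \to G_{\Gal} \to 1$ with $F := G \cap \S$, which is (conjugate to) $C_p \rtimes C_{(p-1)^2}$, and $G_{\Gal} \le \Gal(\F_{p^n}/\F_p)$; since $C_p$ is the unique, hence characteristic, $p$-Sylow of $F$ and $F \trianglelefteq G$, we get $C_p \trianglelefteq G$ with $|Q| = (p-1)^2\,|G_{\Gal}|$ prime to $p$. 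In the Lyndon--Hochschild--Serre spectral sequence for $1 \to C_p \to G \to Q \to 1$, every row $H^j(Q, H^i(C_p, E_*))$ with $j > 0$ vanishes: for $i > 0$ the group $H^i(C_p, E_*)$ is annihilated by $p$, hence is an $\F_p$-vector space, while $H^0(C_p, E_*) = (E_*)^{C_p}$ is a $\Z_p$-module, and in either case $|Q|$ acts invertibly. Hence restriction is an isomorphism $H^*(G, E_*) \xrightarrow{\,\sim\,} H^*(C_p, E_*)^{Q}$.

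Next I would check that this identification is compatible with transfers. Factoring $\mathrm{tr} = \mathrm{tr}^G_{\{e\}}$ as $\mathrm{tr}^G_{C_p} \circ \mathrm{tr}^{C_p}_{\{e\}}$ and setting $I := \mathrm{im}(\mathrm{tr}^{C_p}_{\{e\}}) \subseteq H^*(C_p, E_*)$ (a $Q$-submodule, by naturality of transfers), the double-coset formula identifies $\mathrm{tr}^G_{C_p}$, under the isomorphism above, with the averaging map $\sum_{q \in Q} q$. Since $|Q|$ is invertible on the modules in sight, this map carries $I$ isomorphically onto $I^Q$ and kills the complementary isotypic summand, so $\mathrm{im}(\mathrm{tr}) = I^Q$; as $M \mapsto M^Q$ is exact here (a direct summand of the identity via the averaging idempotent), \Cref{prop:e2cals} gives
\[
H^*(G, E_*)/\mathrm{im}(\mathrm{tr}) \;\cong\; \bigl(H^*(C_p, E_*)/I\bigr)^{Q} \;\cong\; \bigl(\F_{p^n}[\alpha,\beta,\delta^{\pm 1}]/(\alpha^2)\bigr)^{Q}.
\]

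It remains to compute this ring of invariants, and this is the step I expect to be the main obstacle, since it needs the $Q$-action not only on $E_*$ but on the $C_p$-cohomology classes $\alpha, \beta$. I would compute the invariants in stages. The subquotient $G_{\Gal}$ acts through its Frobenius action on the coefficient field and (possibly) its conjugation on $C_p$; its invariants cut $\F_{p^n}$ down to $\F_p$ in the maximal case $G_{\Gal} = \Gal(\F_{p^n}/\F_p)$, achievable by the discussion above, and to the appropriate subfield otherwise. For the $C_{(p-1)^2}$ part: a generator $g$ injects into $\F_{p^n}^\times$ under the tangent-space homomorphism \eqref{homomorphism} (its kernel being pro-$p$ while $|g|$ is prime to $p$), so $g_*$ scales $u$ by a primitive $(p-1)^2$-th root of unity $\zeta$, hence scales $\delta$ (a unit multiple of $u^{-p}$) by $\zeta^{-p}$, which still has exact order $(p-1)^2$ since $\gcd(p,(p-1)^2) = 1$. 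Thus the minimal invariant power of $\delta$ is $\Delta := \delta^{(p-1)^2} = \delta^{n^2}$, with $|\Delta| = (0, 2pn^2)$ as asserted. The delicate point is the $C_{(p-1)^2}$-action on $\alpha$ and $\beta$: one must combine the scaling of $u$ with the conjugation action of $g$ on $C_p$ — through the surjection $C_{(p-1)^2} \twoheadrightarrow C_{p-1} \cong \operatorname{Aut}(C_p)$ — on $H^*(C_p, E_*)$, and verify these scalings cancel, so that $\alpha$ and $\beta$ have $C_{(p-1)^2}$-invariant representatives of the \emph{same} bidegrees $(1,2n)$ and $(2,2pn)$, still with $\alpha^2 = 0$. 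Granting this, the invariant subring is $\F_p[\alpha,\beta,\Delta^{\pm 1}]/(\alpha^2)$, and the composite $H^*(G, E_*) \twoheadrightarrow H^*(G, E_*)/\mathrm{im}(\mathrm{tr}) \cong \F_p[\alpha,\beta,\Delta^{\pm 1}]/(\alpha^2)$ is the exact sequence in the statement. (Alternatively, as the references recalled in the text do, one computes $H^*(G, E_*)$ and $(E_*)^G$ outright from the explicit formulas for the $G$-action on $E_*$, bypassing \Cref{prop:e2cals}.)
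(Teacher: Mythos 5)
The paper does not actually prove this proposition: it is quoted as an unpublished result of Hopkins--Miller, with Nave and Goerss--Henn--Mahowald(--Rezk) cited for published accounts, and the only indication of method is the sentence immediately preceding the statement, that ``the passage to $\F_p$ can be achieved by adding in the Galois group.'' Your proposal is the natural expansion of that remark, and its reduction steps are sound: $C_p$ is indeed normal in $G$ (it is the unique, hence characteristic, $p$-Sylow of $G_{\S}\trianglelefteq G$), the Lyndon--Hochschild--Serre collapse for the prime-to-$p$ quotient $Q$ is standard, and the identification of transfer images via the double-coset formula is correct. The one step you flag as delicate --- that $\alpha$ and $\beta$ admit $Q$-invariant representatives in the same bidegrees --- can be closed without any hands-on computation with the conjugation and scaling actions: the paper's discussion just after these propositions exhibits $\alpha$ and $\beta$ (up to nonzero scalar) as images of $\alpha_1,\beta_1$ under $\Ext_{BP_*BP}(BP_*,BP_*)\to H^*(\G,E_*)\to H^*(G,E_*)\to H^*(C_p,E_*)$, and any class restricted from $H^*(G,E_*)$ to the normal subgroup $C_p$ is automatically $Q$-invariant; since these images are nonzero generators of the relevant $\F_{p^n}$-lines modulo transfers, the required invariance follows, and the semilinearity of the Galois action then cuts those lines down to $\F_p$ via $\F_{p^n}^{G_{\Gal}}=\F_p$ in the maximal case. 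Granting that, your computation of the invariant subring --- in particular $\Delta=\delta^{n^2}$ of bidegree $(0,2pn^2)$, since the tangent character of a generator of $C_{(p-1)^2}$ has exact order $n^2$ and $\gcd(p,n^2)=1$ --- yields the stated exact sequence; alternatively, as you note, one may simply defer to the cited references, which is all the paper itself does.
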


The above determines the $E_2$-page of the HFPSS for $\pi_*(E^{hG})$ in two important cases. 
The differentials in the HFPSS are also well known. For any finite subgroup $G \subset \G$, there are maps
\[
\Ext_{BP_*BP}(BP_*,BP_*) \to H^*(\G,E_*) \to H^*(G,E_*).
\]
If $G$ contains an element of order $p$, then, by~\cite{ravenel_arf}, the
composite sends $\alpha_1$ to $\alpha$ and $\beta_1$ to $\beta$, at least up to
a nonzero scalar. 

One can also show this directly using the functoriality of the Greek letter construction, as in~\cite[Prop.~7]{ghm_skye_paper}. For this, recall that there is a class $v_1 \in \Ext_{BP_*BP}^{0,2(p-1)}(BP_*,BP_*/(p))$ which defines a permanent cycle in the Adams--Novikov spectral sequence for the mod-$p$ Moore spectrum $V(0)$. Via the Greek letter construction, $v_1$ gives rise to an element $\alpha_1 \in \pi_{2p-3}S^0_{(p)} \simeq \Z/p$ detected by a class, which we also denote by $\alpha_1$, in $\Ext^{1,2p-2}_{BP_*BP}(BP_*,BP_*)$. Recalling that $v_1$ is sent to $u_1u^{1-p}$ under the orientation map $BP \to E$, one can check that the image of $v_1$ defines a non-trivial element in $H^0(G,E_*/(p))$. Using the short exact sequence
\[
0 \to E_* \to E_* \to E_*/(p) \to 0,
\]
one can see that this class has non-trivial image in $\delta^0(v_1) \in
H^1(G,E_{2p-2}) $ (which if $G$ is maximal is $\mathbb{F}_p$). The result follows since this latter group is generated by $\alpha$. One can make a similar argument for $\beta$, and since $\alpha_1$ and $\beta_1$ are permanent cycles, so are $\alpha$ and $\beta$. We also know via the calculations of Toda~\cite{toda67,toda68} that $\alpha_1\beta_1^p = 0$ and $\beta_1^{pn+1} = 0$ in $\pi_*S^0$. Sparsity in the homotopy fixed point spectral sequence, as well as the multiplicative structure, then allow the complete determination of the differentials.  

In the following we use the notation $\dequal$ to denote an equality that 
holds up to multiplication by a nonzero scalar.
\begin{lemma}\label{lem:differentials}
	In the spectral sequence for $\pi_*E^{hC_p}$, there are differentials
	\[
d_{2p-1}(\delta) \dequal \alpha \beta^{p-1}\delta^{1-n^2}   \quad \text{ and } \quad d_{2n^2+1}(\delta^{n^3} \alpha) \dequal \beta^{n^2+1}.
	\]
	If $G$ is a maximal finite subgroup, then in the spectral sequence for
	$\pi_*E^{hG}$, there are differentials  
	\[
d_{2p-1}(\Delta) \dequal \alpha \beta^{p-1} \quad \text{ and } \quad d_{2n^2+1}(\Delta^{n}\alpha) \dequal \beta^{n^2+1}. 
	\]
\end{lemma}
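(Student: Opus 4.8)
The plan is to reduce to $G=C_p$ and there run the spectral sequence using permanent cycles, multiplicativity, the nilpotence of $\beta$, and sparsity. For a maximal finite subgroup $G$ containing $p$-torsion, the inclusions $C_p\hookrightarrow G_\S\hookrightarrow G$ induce restriction maps of homotopy fixed point spectral sequences; by comparison of internal degrees, $\alpha$ and $\beta$ restrict to nonzero scalar multiples of $\alpha$ and $\beta$, while the periodicity class $\Delta$, of internal degree $2pn^2=n^2|\delta|$, is invertible and hence restricts to a nonzero scalar multiple of $\delta^{n^2}$. As restriction is injective on the subalgebra these classes generate, the Leibniz rule together with the $C_p$-differentials established below gives $d_{2p-1}(\Delta)\dequal d_{2p-1}(\delta^{n^2})=n^2\,\alpha\beta^{p-1}\dequal\alpha\beta^{p-1}$ and $d_{2n^2+1}(\Delta^{n}\alpha)\dequal d_{2n^2+1}(\delta^{n^3}\alpha)\dequal\beta^{n^2+1}$, using $n^2=(p-1)^2\not\equiv 0\bmod p$. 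So it suffices to treat $C_p$.

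For $C_p$, the shape of the spectral sequence is forced by three inputs: $\alpha$ and $\beta$ are permanent cycles, being images of $\alpha_1\in\pi_{2p-3}S^0$ and $\beta_1\in\pi_{2pn-2}S^0$ (permanent cycles in the Adams--Novikov spectral sequence); $\beta$ is nilpotent in $\pi_*E^{hC_p}$, as $\beta_1$ is nilpotent in $\pi_*S^0$; and, modulo the image of the transfer, the $E_2$-page is $\F_{p^n}[\alpha,\beta,\delta^{\pm 1}]/(\alpha^2)$, so by multiplicativity every differential is determined on $\delta^{\pm1}$ and on $\alpha\delta^{\pm k}$. Next I would use sparsity: every class has even internal degree $t$, so a nonzero $d_r$ forces $r$ odd, and a congruence modulo $2p$ on the internal degrees of $\alpha,\beta,\delta$ shows that $d_r(\delta^j)$ can be nonzero only for $r\equiv-1\bmod 2p$, with target a scalar multiple of $\alpha\beta^{(r-1)/2}\delta^{c}$, and that $d_r(\alpha\delta^j)$ can be nonzero only for $r\equiv 3\bmod 2p$, with target a scalar multiple of $\beta^{(r+1)/2}\delta^{c'}$. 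Granting that the first nonzero differential on $\delta$ is $d_{2p-1}$, bidegree bookkeeping pins down $d_{2p-1}(\delta)\dequal\alpha\beta^{p-1}\delta^{1-n^2}$; after running it, the $\beta$-power classes that survive are exactly those whose $\delta$-exponent is divisible by $p$, and the unique differential on $\alpha\delta^{n^3}$ whose target lies at $\beta^{n^2+1}$ is $d_{2n^2+1}$, with target in $\F_{p^n}\cdot\beta^{n^2+1}$, a surviving class.

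The main obstacle is to show these differentials are genuinely nonzero, and in particular to rule out the a priori possibility that $d_{2p-1}\equiv 0$ with the first action on $\delta$ occurring only at $d_{4p-1}$ or later (nilpotence of $\beta$ only forces \emph{some} $\delta^j$ to support \emph{some} $d_{(2k)p-1}$). For this one imports the actual computation of $H^*(\G,E_*)$ and its comparison with $\Ext_{BP_*BP}(BP_*,BP_*)$ — equivalently, the first nonzero Adams--Novikov-type differential in this setting — which is the unpublished calculation of Hopkins and Miller, with published accounts in Nave's thesis~\cite{nave_thesis} and~\cite{nave_smith_toda} (and at $n=2,p=3$ in~\cite{ghm_skye_paper,ghmr_paper}). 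Once $d_{2p-1}(\delta)\ne 0$ is known, the nonvanishing of $d_{2n^2+1}(\delta^{n^3}\alpha)$ and the survival of $\delta^{n^3}\alpha$ to the $E_{2n^2+1}$-page both follow from the nilpotence of $\beta$ and the sparsity bookkeeping, as $d_{2p-1}(\delta)$ and $d_{2n^2+1}(\delta^{n^3}\alpha)$ are then the only differentials, compatible with the degree constraints, capable of truncating the $\beta$-towers to finite height.
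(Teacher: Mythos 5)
Your skeleton — $\alpha,\beta$ permanent cycles detecting $\alpha_1,\beta_1$, sparsity forcing $r\equiv -1\bmod 2p$ for differentials off $\delta^j$ and $r\equiv 3\bmod 2p$ for differentials off $\alpha\delta^j$, multiplicativity, and a restriction argument to pass from $C_p$ to a maximal $G$ — is exactly the shape of the paper's argument, and your degree bookkeeping is correct. But at the step you yourself flag as the crux (ruling out $d_{2p-1}\equiv 0$ with the first differential occurring at $d_{4p-1}$ or later), you resolve the issue by importing the Hopkins--Miller/Nave computation wholesale. Since that computation \emph{is} the content of the lemma, this leaves the decisive nonvanishing unproved rather than proved.

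The missing idea is that one should use Toda's \emph{specific} relations in $\pi_*S^0$, not just abstract nilpotence of $\beta_1$: namely $\alpha_1\beta_1^p=0$ and $\beta_1^{pn+1}=0$. The class $\alpha\beta^p$ is a product of permanent cycles, hence a permanent cycle; if it survived to $E_\infty$ it would detect the image of $\alpha_1\beta_1^p$ under the unit map $\pi_*S^0\to\pi_*E^{hC_p}$, which vanishes. So $\alpha\beta^p$ must be \emph{killed}, and your own sparsity analysis shows the only admissible differential hitting it is a $d_{2p-1}$ whose source is a multiple of $\beta\delta^j$ for the unique $j$ allowed by internal degrees; Leibniz then yields $d_{2p-1}(\delta)\dequal\alpha\beta^{p-1}\delta^{1-n^2}$ with no appeal to $H^*(\G,E_*)$. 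The relation $\beta_1^{pn+1}=0$ plays the analogous role for the second family: after running $d_{2p-1}$, the tower $\{\beta^b\}$ survives, and the forced death of $\beta^{pn+1}$ together with your congruence $r\equiv 3\bmod 2p$ and the list of surviving classes $\alpha\beta^b\delta^d$ (those with $d\equiv -1\bmod p$) pins down $d_{2n^2+1}(\delta^{n^3}\alpha)\dequal\beta^{n^2+1}$. Your reduction of the maximal-$G$ case to $C_p$ via restriction (injective on the $p$-primary, non-transfer part since $[G:C_p]$ is prime to $p$, with $\Delta$ restricting to a unit multiple of $\delta^{n^2}$ modulo transfers and $n^2\not\equiv 0\bmod p$) is fine and is a reasonable substitute for the paper simply recording both cases.
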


These differentials determine all others by multiplicativity. More precisely, in the $C_p$-HFPSS, the shorter differential $d_{2p-1}$ is non-zero on all $\beta^b \delta^d$ with $d\not\equiv 0 \mod p$, and classes of form $\alpha\beta^b \delta^d$ are hit whenever $b\geq p-1$ and $d\not\equiv -1 \mod p$. Now consider a monomial $\alpha\beta^b\delta^d$ that survives to the $E_{2p} = E_{2n^2+1}$-page; in particular, $d\equiv -1 \mod p$ or $b<p-1$. If $d\not\equiv -1 \mod p$ (so we would have that $b<p-1$), the potential elements for $d_{2n^2+1}(\alpha\beta^b \delta^d)$ (which are multiples of $\beta^{n^2+b+1}\delta^{d-n^3}$) have already been wiped out by $d_{2p-1}$.  When $d\equiv -1 \mod p$, the potential targets are still there, and \Cref{lem:differentials} implies that indeed $d_{2n^2+1}(\alpha\beta^b \delta^d) \dequal \beta^{n^2+b+1}\delta^{d-n^3}$.

As a specific example, note that $\alpha\delta$ is a permanent cycle, but $\alpha\delta^{-1}$ is not.

\begin{rem}\label{rem:periodicity}
This implies the following:
\begin{itemize}
    \item The homotopy groups $\pi_*E_{p-1}^{hC_p}$ are $2p^2$-periodic, with periodicity generator $\delta^p$, and, 
    \item The homotopy groups $\pi_*E_{p-1}^{hG}$ are $2p^2n^2$-periodic, with periodicity generator $\Delta^p$.
  \end{itemize}  
\end{rem}

We demonstrate the computation (modulo some elements on the 0-line) of $\pi_* E^{hC_p}$ for $n=2,p=3$  in~\Cref{fig:eo2}. 
\begin{figure}[!htbp]
\centering
\includegraphics{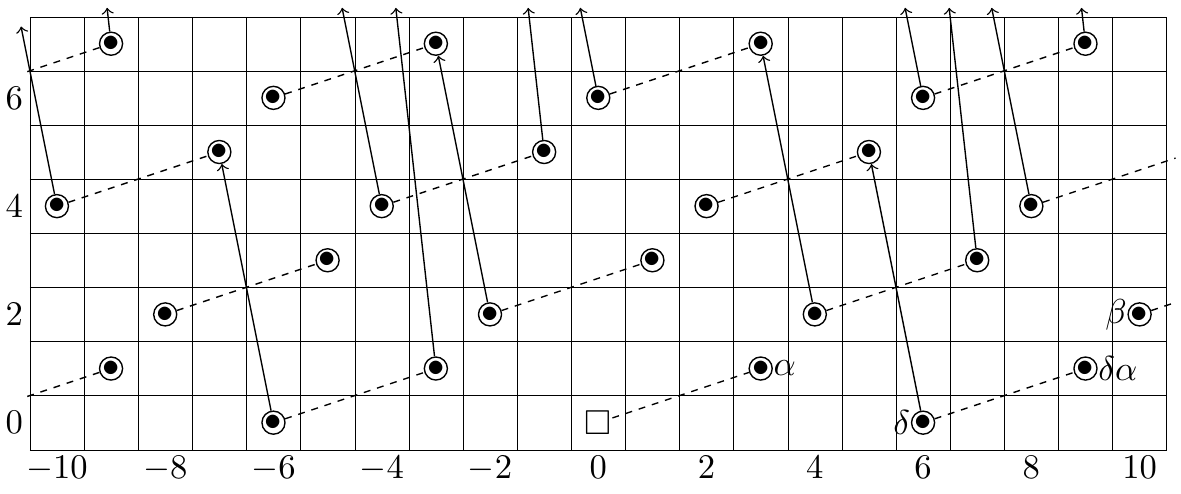}
\caption{The HFPSS for $\pi_*E^{hC_p}$ in the case $n=2,p=3$. Circled dots
refer to copies of $\F_9$, and the square represents a copy of $\W(\F_9)$.
The dotted lines represent multiplication by $\alpha$.}\label{fig:eo2}
\end{figure}


\section{General reduction steps}\label{sec:lemmas}

In this section, we will prove some general results about the action of finite
subgroups $G \subset \G_n$ on $E_n$ that will be needed for the computation
of the Picard group of $E_n^{hG}$ (i.e., the proof that it is cyclic). 
First, we will give a detailed proof of the
Devinatz--Hopkins--Rognes result (\Cref{galoisproperty}) that 
$E_n^{hG} \to E_n$ is a faithful $G$-Galois extension in the sense of Rognes
\cite{rognes_galois}, based on the affineness machinery of \cite{MMaffine}
(where the proof is sketched). 
In particular, we obtain the descent spectral sequence for Picard groups that
is our main tool. 

We then make two basic reductions for proving the cyclicity
of the Picard group, which show that it suffices to do the calculation in a
single case: when $G$ is a finite $p$-subgroup of $\S_n$. The analysis of the descent
spectral sequence in this case (in our special case of interest, $n=p-1$, $G$ will then be $C_p$) will be carried out in the following section, using the
technology developed in the second part of this paper. 

\emph{We emphasize that $n$ and $p$ are arbitrary in this section.}

\subsection{The Galois property of the Hopkins--Miller spectra}\label{sec:hmgalois}

In this subsection, we will show that  for any finite subgroup
$G \subset \G_n$, the extension $E_n^{hG} \to E_n$ is a faithful $G$-Galois
extension of ring spectra. 
This result is essentially due (at least in the $K(n)$-local case) to
Devinatz--Hopkins \cite{dev_hop_04} and Rognes \cite{rognes_galois}. 
We note also that, for $n = p-1$ (the case in which we are primarily
interested here), the first author in \cite{heard_tate} has given a
computational proof of the vanishing of the Tate construction, which is a key
component of the result. 

Using the results of \cite{MMaffine}, it will suffice to show that the stack
$(\spec \pi_0 E_n)/G $ is affine over the moduli stack $M_{FG}$ of formal
groups. This is easy to check at the closed point, and follows in general by a formal
argument that we describe here, involving ``free actions.''

We need to recall the following classical definition.

\begin{definition} 
Let $A$ be a commmutative ring with an action of a finite group $G$. Then $G$
is said to \emph{act freely} on $A$ if for every commutative ring $A'$ (it
suffices to take $A'$ to be a field), the $G$-action on the set $\hom(A, A')$
of ring-homomorphisms $A \to A'$ is free. 
Equivalently, for every prime ideal $\mathfrak{p} \subset A$, the subgroup
$G_{\mathfrak{p}} \subset G$ stabilizing $\mathfrak{p}$ acts faithfully on the
residue field $k(\mathfrak{p})$. In this case, the inclusion of rings $A^G \to
A$ induces a $G$-torsor $\spec A \to \spec A^G$, and $\spec A^G \simeq (\spec
A)/G$ in the category of stacks. 
\end{definition} 

We refer to \cite[Exp.~V]{SGA1} for a detailed treatment and to \cite[Sec.~4]{DAGXI} for a discussion with a view towards ring spectra. 
We will need the following important example of a free action in the future.

\begin{example}
\label{wittvectorfree}
We let $k$ be a perfect field of characteristic $p$. 
We recall (cf.~\cite[Ch.~II, Sec.~5--6]{Serreloc}) some properties of the functorial (in $k$) construction of the $p$-complete ring $\W(k)$ of $p$-typical Witt vectors, which is uniquely determined by the condition
that $\W(k)$ should be torsion-free and $\W(k)/p \simeq  k$. 
The ring $\W(k)$ has the following universal property for $p$-complete rings:
given any $p$-complete ring $A_0$, to give a map $\W(k) \to A_0$ is equivalent
to giving a map of $\mathbb{F}_p$-algebras $k \to A_0 /p$. 

We will need the following basic fact about Witt vectors. Let $k$ be a
perfect field of characteristic $p$ and let $G$ be any finite
subgroup of $\mathrm{Aut}(k)$. 
Then, the $G$-action on $\W(k)$ is free. 
In particular, the extension $\W(k^{G}) \to \W(k)$ is a $G$-Galois extension of
commutative rings (cf. \cite[Exp. I, Theorem 6.1]{SGA1}). 
\end{example}

We now specialize to the case of interest in this subsection. 

\begin{cons}
We consider the category consisting of pairs $(B, \Gamma)$ where $B$ is a
commutative ring and $\Gamma$ is a formal group over $B$. 
A morphism $(B, \Gamma) \to (B', \Gamma')$ consists of a morphism of
commutative rings $f\colon B \to B'$ and an isomorphism of formal groups $f_* \Gamma
\simeq \Gamma'$. 
This category 
is cofibered in groupoids over the category of commutative rings, and is
encoded in the \emph{moduli stack  of formal groups} $M_{FG}$: that is, it is (by
definition) the category of maps $\spec B \to M_{FG}$ from affine schemes to
$M_{FG}$.
\end{cons}

We will consider group actions in this category, or equivalently morphisms of algebraic stacks of the form $(\spec B)/G \to
M_{FG}$. In particular, giving a $G$-action on a pair $(B, \Gamma)$ yields for
each $\sigma \in B$ the data of an automorphism $\sigma\colon  B  \simeq B$ and an
isomorphism of formal groups $f_\sigma\colon  \sigma_* \Gamma \simeq \Gamma$ which satisfy a
homomorphism and cocycle condition, respectively. 
We will represent $\Gamma$ by a formal group law over $B$ given by a power
series $F(X, Y) \in B[\![X, Y]\!]$. In this case, 
$f_\sigma = f_\sigma(X)$ is a power series in $B[\![X]\!]$ such that
\[ F (f_\sigma (X), f_\sigma(Y)) = f_\sigma( \sigma_* F(X, Y)).   \]

We will need a general criterion to show that a map of the form $(\spec B)/G \to M_{FG}$ is
affine. 
For this, it will suffice to consider (for various $n$) the faithfully flat cover $M_{FG}^{(n)}
\to M_{FG}$ where 
$M_{FG}^{(n)}$ is the moduli stack of formal groups together with a coordinate
modulo degrees $\geq n+1$. 

\begin{prop} 
\label{freeafternstructure}
Suppose $G$ acts on the pair $(B, \Gamma)$ where $\Gamma$ is a 
a formal group over the commutative ring $B$ with the above notation. 
For any prime ideal $\mathfrak{p} \subset B$, we let 
$I_{\mathfrak{p}} \subset G$ be the subgroup of elements $\sigma \in G$ that
$\sigma( \mathfrak{p}) = \mathfrak{p}$ and $\sigma$ induces the identity
automorphism of $k( \mathfrak{p})$.

Fix $n$, and suppose that if $\mathfrak{p}$ is any prime ideal and $\sigma \in
I_{\mathfrak{p}}$, then at least one of the first $n$ coefficients of $f_\sigma(X) - X  \in B[\![X]\!]$ 
has nonzero image in $k( \mathfrak{p})$.
Then the $G$-action on the scheme $\spec B \times_{M_{FG}} M_{FG}^{(n)}$ is
free, so that $(\spec B/G) \times_{M_{FG}} M_{FG}^{(n)}$ is representable by an
affine scheme.
\end{prop}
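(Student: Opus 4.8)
The plan is to make the morphism explicit and then verify freeness point-by-point using the residue-field criterion recorded in the Definition above, after which affineness is automatic.

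First I would identify the scheme $\spec B\times_{M_{FG}} M_{FG}^{(n)}$ concretely. Representing $\Gamma$ by a formal group law $F(X,Y)\in B[\![X,Y]\!]$ rigidifies the pair $(\Gamma,\text{tautological coordinate }X)$, so the fiber product is the affine scheme $\spec\tilde B$ of $n$-truncated coordinate changes of $F$, where $\tilde B=B[c_1^{\pm 1},c_2,\dots,c_n]$ carries the universal truncated coordinate $t=c_1X+c_2X^2+\cdots+c_nX^n\in\tilde B[\![X]\!]/X^{n+1}$; affineness here can be checked after the fppf base change $\spec L\to M_{FG}$ by the Lazard ring. Under this description the $G$-action restricts on $B\subset\tilde B$ to the given one, and it acts on $t$ by transport along the isomorphisms $f_\sigma$: a $k$-point of $\spec\tilde B$ is a pair $(\phi\colon B\to k,\bar t)$ with $\bar t$ a truncated coordinate on $\phi_*\Gamma$, and $\sigma\cdot(\phi,\bar t)=(\phi\circ\sigma^{-1},\,\bar t\circ\phi(f_\sigma))$ as truncated power series (the precise side of the composition does not matter for what follows). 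In particular the structure map $\spec\tilde B\to M_{FG}^{(n)}$ is $G$-invariant, so it suffices to show that the $G$-action on $\spec\tilde B$ is free: then by the Definition $(\spec\tilde B)/G\simeq\spec(\tilde B^G)$ is affine, and since the action is free and lies over $M_{FG}^{(n)}$, base change gives $(\spec\tilde B)/G\simeq(\spec B/G)\times_{M_{FG}}M_{FG}^{(n)}$.

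To check freeness I would use the residue-field reformulation: the action on $\spec\tilde B$ is free iff for every prime $\tilde{\mathfrak p}\subset\tilde B$ the inertia subgroup $I_{\tilde{\mathfrak p}}$ — the $\sigma\in G$ fixing $\tilde{\mathfrak p}$ and acting trivially on $k(\tilde{\mathfrak p})$ — is trivial. Fix $\tilde{\mathfrak p}$, let $\mathfrak p=\tilde{\mathfrak p}\cap B$, and take $\sigma\in I_{\tilde{\mathfrak p}}$; since $\sigma$ preserves $B\subset\tilde B$ it fixes $\mathfrak p$, and since it acts trivially on $k(\tilde{\mathfrak p})\supseteq k(\mathfrak p)$ it acts trivially on $k(\mathfrak p)$, so $\sigma\in I_{\mathfrak p}$ in the sense of the Proposition. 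Now $\tilde{\mathfrak p}$ determines a $k$-point $x=(\bar\phi\colon B\to k,\bar t)$ of $\spec\tilde B$ with $k=k(\tilde{\mathfrak p})$ and $\bar t=\bar c_1X+\cdots+\bar c_nX^n$, $\bar c_1\in k^\times$; because $\sigma$ fixes $\tilde{\mathfrak p}$ and acts trivially on $k$ it fixes $x$ on the nose, i.e. $\sigma\cdot x=x$. Comparing coordinate parts gives $\bar t\circ\bar f_\sigma\equiv\bar t\pmod{X^{n+1}}$, where $\bar f_\sigma$ is the reduction of $f_\sigma$ (an automorphism of $\bar\phi_*\Gamma$, since $\sigma\in I_{\mathfrak p}$). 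As $\bar t$ is invertible in the group of truncated power series modulo $X^{n+1}$ with invertible linear term, cancelling it forces $\bar f_\sigma(X)\equiv X\pmod{X^{n+1}}$, i.e. each of the first $n$ coefficients of $f_\sigma(X)-X$ vanishes in $k(\mathfrak p)$. If $\sigma\neq e$ this contradicts the hypothesis, so $\sigma=e$; hence $I_{\tilde{\mathfrak p}}$ is trivial, the $G$-action on $\spec\tilde B$ is free, and the Proposition follows from the last sentence of the previous paragraph.

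The main obstacle is the bookkeeping of the second paragraph: identifying the fiber product with the scheme of truncated coordinate changes, pinning down the $G$-action on it (the variance in $\sigma\mapsto f_\sigma$ and the semidirect-product-type transport formula), and invoking the standard-but-fiddly fact that quotient by a free action commutes with base change along $M_{FG}^{(n)}\to M_{FG}$. Once these are in place the freeness argument is short and robust to sign conventions. I would also note explicitly that the hypothesis is only meaningful for non-identity $\sigma$, which is why the contradiction is drawn under the assumption $\sigma\neq e$.
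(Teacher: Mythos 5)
Your proposal is correct and follows essentially the same route as the paper: identify $\spec B\times_{M_{FG}}M_{FG}^{(n)}$ with $\spec B[c_1^{\pm1},c_2,\dots,c_n]$ classifying $n$-truncated coordinates, observe that an inertial $\sigma$ at a prime upstairs restricts to an element of $I_{\mathfrak p}$ downstairs, and derive a contradiction from the fact that composition with a nontrivial $\bar f_\sigma$ acts freely on the torsor of truncated coordinates. Your explicit cancellation of $\bar t$ in the group of truncated power series is just a spelled-out version of the paper's ``acts freely on the set of coordinates'' step, so the two arguments coincide.
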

\begin{proof} 
Let $B'$ be the $B$-algebra which classifies coordinates modulo degrees $\geq
n+1$ on the formal
group $\Gamma$. Abstractly, $B' \simeq B[x_1^{\pm 1}, x_2, \dots, x_{n}]$. 
The $G$-action on $(B, \Gamma)$ induces a $G$-action on $(B', \Gamma')$, where
$\Gamma'$ is the extension of scalars of $\Gamma$  to $B'$. 

We need to show that $G$ acts freely on $\hom(B', k)$ for any field $k$. 
Let $\sigma \in G \setminus \left\{1\right\}$ fix an element of $\hom(B', k)$ given by a map $\phi\colon  B' \to k$. 
We get that $\sigma$ fixes the restriction $\phi|_B\colon  B \to k$, which means that 
the prime ideal $\mathfrak{p} = \mathrm{ker}( \phi|_B)$ is fixed by $\sigma$,
and that $\sigma$ acts trivially on the residue field $k( \mathfrak{p})$, so
that $\sigma \in I_{\mathfrak{p}}$. 
Note that the fiber above $\phi|_B$ of the map (which is preserved by $\sigma$)
\begin{equation} \label{maptotakefiber} \hom(B', k) \to \hom(B, k) \end{equation} 
is given by the set of coordinates to degree $n$ on the formal group $(\phi|_B)_* (\Gamma)$,	
and the action of $\sigma$ on this set of coordinates is given by composing
with the image of $f_{\sigma}$. 
However, since the image of $f_\sigma \in k[\![X]\!]$ is not the identity, 
it acts freely on the above set of coordinates up to degree $n$. In particular, the 
action of $\sigma$ on the fiber of \eqref{maptotakefiber} above $\phi|_B$ has
no fixed points, which is a contradiction and 	which completes the proof. 
\end{proof} 

\begin{lemma} 
\label{keyaffinenesslemma}
Suppose $(R, \mathfrak{m})$ is a complete local ring with perfect residue field
$k$ of characteristic $p > 0$, and let $\Gamma$ be a formal group over
$R$. 
Let a finite group $G$ act on the pair $(R, \Gamma)$. Let $S \subset G$ be the
subgroup that acts trivially on the residue field. Suppose that every
element $\sigma \in S \setminus \left\{1\right\}$ induces a nontrivial
automorphism of the reduction of $\Gamma$ modulo $\mathfrak{m}$. 
Then the morphism $(\spec R)/G \to M_{FG}$ is affine.
\end{lemma}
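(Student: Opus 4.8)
The plan is to deduce this from \Cref{freeafternstructure} by base-changing along the faithfully flat cover $M_{FG}^{(n)} \to M_{FG}$ parametrizing formal groups equipped with a coordinate modulo degrees $\ge n+1$, for a suitably large $n$. Since affineness of a morphism of stacks may be checked after a faithfully flat base change, and since base change along the affine morphism $M_{FG}^{(n)} \to M_{FG}$ commutes with the quotient by $G$, it suffices to produce an integer $N$ for which the hypothesis of \Cref{freeafternstructure} holds for $(R,\Gamma)$: then the $G$-action on $\spec R \times_{M_{FG}} M_{FG}^{(N)}$ is free, the quotient $(\spec R/G) \times_{M_{FG}} M_{FG}^{(N)}$ is an affine scheme, and we are done by descent.

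First I would record that $R$ being local forces $I_{\mathfrak{p}} \subseteq S$ for every prime $\mathfrak{p} \subset R$, where $I_{\mathfrak{p}}$ is as in \Cref{freeafternstructure}: indeed $\mathfrak{p} \subseteq \mathfrak{m}$, and if $\sigma$ fixes $\mathfrak{p}$ and acts trivially on $k(\mathfrak{p})$ then $\sigma(r) - r \in \mathfrak{p} \subseteq \mathfrak{m}$ for all $r \in R$, so $\sigma$ acts trivially on $k = R/\mathfrak{m}$, i.e.\ $\sigma \in S$. Next, for each nonidentity $\sigma \in S$ the standing hypothesis says that $\overline{f_\sigma}(X) \ne X$ in $k[\![X]\!]$, so there is a smallest degree $n(\sigma) \ge 1$ in which the coefficient of $f_\sigma(X) - X \in R[\![X]\!]$ does not lie in $\mathfrak{m}$. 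Because $S$ is finite I may set $N = \max_{1 \ne \sigma \in S} n(\sigma) < \infty$.

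With this $N$ in hand, for any prime $\mathfrak{p}$ and any nonidentity $\sigma \in I_{\mathfrak{p}} \subseteq S$, the degree-$n(\sigma)$ coefficient of $f_\sigma(X) - X$ does not lie in $\mathfrak{m} \supseteq \mathfrak{p}$, hence has nonzero image in $R/\mathfrak{p}$ and therefore in $k(\mathfrak{p})$; since $n(\sigma) \le N$, this is exactly the hypothesis of \Cref{freeafternstructure} with $n = N$. Applying that proposition and descending along $M_{FG}^{(N)} \to M_{FG}$ as indicated finishes the proof. The only point that genuinely has to be checked — and the sole potential obstacle — is that a \emph{single} bound $N$ works simultaneously for all primes $\mathfrak{p}$; this is precisely where finiteness of $G$ (hence of $S$) enters, and once the inclusion $I_{\mathfrak{p}} \subseteq S$ is observed the uniform bound is immediate. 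Note that only locality of $R$, not completeness, is used in this step.
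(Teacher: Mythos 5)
Your proof is correct, and it follows the paper's overall strategy: reduce to \Cref{freeafternstructure} by producing a single bound $N$ that works for all primes, using finiteness of $S$ together with the observation that a coefficient of $f_\sigma(X)-X$ lying outside $\mathfrak{m}$ is a unit and hence has nonzero image in every $k(\mathfrak{p})$. The one place you genuinely diverge is the key inclusion $I_{\mathfrak{p}} \subseteq S$. The paper proves this by showing that any $\sigma \notin S$ acts without fixed points on $\hom(R,C)$ for \emph{every} ring $C$: it maps $\W(k)$ equivariantly into $R$ via the universal property of the Witt vectors and invokes the freeness of the $\mathrm{Aut}(k)$-action on $\W(k)$ (\Cref{wittvectorfree}), an argument that uses completeness of $R$ and perfectness of $k$. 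Your observation that locality alone suffices --- triviality of $\sigma$ on $k(\mathfrak{p})$ forces $\sigma(r)-r \in \mathfrak{p} \subseteq \mathfrak{m}$ for all $r$, hence triviality on $k=R/\mathfrak{m}$ --- is a correct and genuinely more elementary route to the same inclusion, and it shows that the completeness and perfectness hypotheses are not actually needed for this particular step (the Witt-vector freeness is still used elsewhere, e.g.\ in \Cref{prop:galoisreduction}). The only minor point worth flagging is that \Cref{freeafternstructure} as stated quantifies over all $\sigma \in I_{\mathfrak{p}}$, including the identity; you correctly restrict to $\sigma \neq 1$, which is the intended reading.
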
 
\begin{proof} 
Fix a prime ideal $\mathfrak{p} \subset R$ and $\sigma \in I_{\mathfrak{p}}$. The first claim is that 
$\sigma \in S$. 
In fact, if $C$ is any ring, we claim that any $\sigma \notin S$ acts without
fixed points on $\hom(R, C)$. 
To see this, we observe that we have a $G$-equivariant map $\W(k) \to R$ using
the universal property of the Witt vectors.  
In particular, we have a map
\[ \hom(R, C) \to \hom(\W(k), C),  \]
compatible with the $\sigma$-action on both sides, and it suffices to show that
$\sigma$ acts freely on the target; this follows from \Cref{wittvectorfree},
since $G/S \subset \mathrm{Aut}(k)$
acts freely on $\W(k)$. 

Therefore, it suffices to consider the case when $G$ itself
acts trivially on the residue field. 
Choose $n$ large enough such that each power series $f_{\sigma}(X) \in R[\![X]\!]$ for $\sigma \neq 1$ has a
coefficient in degrees $\leq n $ different modulo $\mathfrak{m}$ from $X$. 
The claim is that $(\spec R)/G \times_{M_{FG}} M_{FG}^{(n)}$ is representable
by an affine scheme. This now follows from \Cref{freeafternstructure} because
each $f_{\sigma}(X) - X$ for $\sigma \neq 1$ has a coefficient which is a
\emph{unit}
in some degree $\leq n$. 
\end{proof}

\begin{prop} 
\label{galoisproperty}
For any finite subgroup $G \subset \G_n$, the
natural map $E_n^{hG} \to E_n$ exhibits the target as a faithful
$G$-Galois extension of the source. \end{prop}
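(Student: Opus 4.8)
The plan is to deduce \Cref{galoisproperty} from the affineness criterion of \cite{MMaffine}: it suffices to show that the morphism of stacks $(\spec \pi_0 E_n)/G \to M_{FG}$ classifying the universal deformation of the Honda formal group is affine, and then invoke the formalism relating affine morphisms to $M_{FG}$ with Galois extensions of the associated ring spectra (the $K(n)$-local even periodic ring spectrum attached to $(\spec \pi_0 E_n)/G$ being $E_n^{hG}$, with $\Gal$-action accounted for via the split extension \eqref{eq:gnexactsequnce}). Faithfulness will follow formally from the affineness together with the fact that the cover $\spec \pi_0 E_n \to (\spec \pi_0 E_n)/G$ is faithfully flat (indeed a $G$-torsor after suitable base change), which is exactly what the ``free action'' language is set up to provide.

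The heart of the argument is the verification of the hypotheses of \Cref{keyaffinenesslemma} with $(R,\mathfrak m) = (\pi_0 E_n, \mathfrak m)$, $k = \F_{p^n}$, and $\Gamma$ the universal deformation of the Honda formal group $\Gamma_n$. First I would record that $R$ is a complete local ring with perfect residue field $\F_{p^n}$ of characteristic $p$, so the standing hypotheses are met. The subgroup $S \subset G$ acting trivially on the residue field is $G \cap \S_n$ intersected with the kernel of $\S_n \to \F_{p^n}^\times$ of \eqref{homomorphism} — more precisely, $S$ consists of those $\sigma \in G$ whose image in $\Gal(\F_{p^n}/\F_p)$ is trivial and which moreover reduce to an automorphism of $\Gamma_n$ over $\F_{p^n}$; but every nontrivial automorphism of $\Gamma_n$ over $\F_{p^n}$ is nontrivial essentially by definition (an automorphism of $\Gamma_n$ over $\F_{p^n}$ that became trivial would be the identity power series), so the condition ``every $\sigma \in S \setminus\{1\}$ induces a nontrivial automorphism of $\Gamma_n$'' is automatic. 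This is the key point: the rigidity of the Lubin--Tate tower means the $G$-action on the closed point already sees all of $G \cap \S_n$ faithfully, and the $\Gal$-part is handled by \Cref{wittvectorfree} via the $G$-equivariant map $\W(\F_{p^n}) \to \pi_0 E_n$.

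Having checked the hypotheses, \Cref{keyaffinenesslemma} gives that $(\spec \pi_0 E_n)/G \to M_{FG}$ is affine. I would then translate this back: by the main theorem of \cite{MMaffine}, affineness of this morphism (together with the evenly periodic, Landweber-exact structure making $E_n$ the ring spectrum attached to $\spec \pi_0 E_n \to M_{FG}$) implies that $E_n$ is a perfect $E_n^{hG}$-module and that the natural map $E_n \otimes_{E_n^{hG}} E_n \to F(G_+, E_n)$ is an equivalence, which together with $E_n^{hG} \simeq E_n^{hG}$ (the Devinatz--Hopkins construction agreeing with the genuine homotopy fixed points for finite $G$) is precisely Rognes's definition of a $G$-Galois extension. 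Faithfulness is then the statement that base change along $E_n^{hG} \to E_n$ is conservative, which follows because $\spec \pi_0 E_n \to (\spec \pi_0 E_n)/G$ is a faithfully flat (indeed finite, after the free-action reduction) cover of stacks, hence descent is effective.

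I expect the main obstacle to be purely expository rather than mathematical: making precise the passage between ``affine morphism to $M_{FG}$'' and ``faithful Galois extension of ring spectra'' in a way that correctly incorporates the Galois twist $\Gal(\F_{p^n}/\F_p)$ and the $K(n)$-localization — i.e., being careful that the stack $(\spec \pi_0 E_n)/G$ really does have $E_n^{hG}$ as its spectrum when $G$ is not contained in $\S_n$. The free-action input for that twist is exactly \Cref{wittvectorfree}, so no new ideas are needed, but the bookkeeping (and the reduction in \Cref{keyaffinenesslemma} to the case $G = S$, then further to coordinates modulo degree $n$ via \Cref{freeafternstructure}) is where the care lies. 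A secondary point to address is that \cite{MMaffine} may only sketch this deduction, so I would spell out the faithfulness claim explicitly, using that a $G$-torsor of affine schemes over a base gives, after passing to the associated ring spectra, a faithful Galois extension — which is where the ``$\spec A^G \simeq (\spec A)/G$'' clause of the free-action definition does its work.
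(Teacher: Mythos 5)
Your proposal is correct and follows essentially the same route as the paper: reduce via \cite[Thm.~5.8]{MMaffine} to affineness of $(\spec \pi_0 E_n)/G \to M_{FG}$, then verify the hypotheses of \Cref{keyaffinenesslemma} with $S = G \cap \S_n$, the key hypothesis being automatic because $\S_n$ is by definition a group of automorphisms of the Honda formal group over $\F_{p^n}$. One small correction: the subgroup acting trivially on the residue field is all of $G \cap \S_n$, not its further intersection with the kernel of the tangent-space homomorphism \eqref{homomorphism} (which records the action on the Lie algebra, not on the residue field) --- your ``more precisely'' rephrasing already lands on the right group, so nothing in the argument is affected.
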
 
\begin{proof} 
By \cite[Thm.~5.8]{MMaffine}, it suffices to show that $(\spec \pi_0 E_n)/G \to
M_{FG}$ is an affine morphism of stacks. 
However, this follows 
from \Cref{keyaffinenesslemma} (with $S = G_\S = G \cap \S $)
because $\S_n$ is defined as a subgroup of automorphisms
of the Honda formal group over $\mathbb{F}_{p^n}$. 
The above reasoning shows that there exists $m$ such that 
$(\spec \pi_0 E_n)/G \times_{ 
M_{FG}} M_{FG}^{(m)}$ is affine. 
\end{proof} 

It follows in particular that there is a descent spectral sequence for
computing $\pic(E_n^{hG})$ (for any finite subgroup $ G \subset \G_n$) based on the equivalence of spectra
\[ \pics(E_n^{hG}) \simeq \tau_{\geq 0}  \pics(E_n)^{hG}.  \]
We know that $\pic(E_n)$ is cyclic (by regularity, cf. \cite{baker_richter_invertible}), and we want to
prove that $\pic(E_n^{hG})$ is also cyclic. 	
In the rest of this section, we will reduce the analysis of this spectral
sequence to the case where $G$ is a $p$-subgroup of $\S_n$.

\begin{rem} 
As a consequence, we can prove that the two possible notions of
Picard group that one might be interested in here are equivalent. Recall that
if $R$ is a $K(n)$-local $\einfty$-ring, 
one can consider two different  Picard groups (cf. \cite{hms_94}): 
\begin{enumerate}
\item The (usual) Picard group $\pic(R)$ of all $R$-modules.  
\item The Picard group $\pic_{K(n)}(R)$ of the $\infty$-category $L_{K(n)} \Mod(R)$ of
$K(n)$-local $R$-modules (with the $K(n)$-localized smash product).
\end{enumerate}
The first group $\pic(R)$ is always a subgroup of $\pic_{K(n)}(R)$, because any invertible
$R$-module is automatically perfect, hence $K(n)$-local itself and is easily
seen to be $K(n)$-locally invertible too. 
Given an element of $\pic_{K(n)}(R)$, represented by a $K(n)$-local $R$-module
$M$,
it belongs to the first group if and only if $M$ is perfect as an $R$-module.  

We claim that if $R = E_n^{hG}$ for any finite $G  \subset \G_n$, the two notions
coincide. 
Equivalently, if $M$ is a $K(n)$-local $R$-module which is 
$K(n)$-locally invertible, then $M$ is invertible as an $R$-module to begin with. 

To see this, we consider the faithful $G$-Galois extension $R \to E_n$. 
Then $E_n$ is a perfect $R$-module by \cite[Prop. 6.2.1]{rognes_galois}. 
Observe that the $E_n$-module $E_n \otimes_R M$ 
is still $K(n)$-local as a result, and it is $K(n)$-locally invertible. 
This implies that $E_n \otimes_R M$ belongs to $\pic_{K(n)}(E_n)$. However,
$\pic_{K(n)}(E_n) = \pic(E_n) = \mathbb{Z}/2$ (by \cite{baker_richter_invertible}); in particular,
$E_n \otimes_R M$ is a perfect $E_n$-module, and by descent (cf.~\cite[Prop. 3.27]{galois}) $M$
is a perfect $R$-module. 
\end{rem} 

\subsection{Elimination of the Galois piece}

In this subsection, we make a basic reduction that enables us to eliminate the
Galois piece. In other words, we reduce to proving cyclicity of the Picard
group of $E_n^{hG}$ when $G \subset \S_n$.

We will need a few basic properties of $\W(k)$. Most of them can be encapsulated
appropriately in the notion of ``formal \'etaleness,''
but we will spell out some details for the convenience of the reader. 

Let $f\colon  A_0 \to A_0'$ be  a morphism of $p$-local commutative rings. 
We say that $f$ is a \emph{uniform $\mathcal{F}_p$-isomorphism} if there
exists $N > 0$ such that: 
\begin{enumerate}
\item $\mathrm{ker}(f)^N = 0$. 
\item Given $x \in A_0'$, $x^{p^N} \in \mathrm{im}(f)$.
\end{enumerate}

\begin{lemma}
Let $f\colon  A_0 \to A_0'$ be a uniform $\mathcal{F}_p$-isomorphism of $p$-complete rings.
Then given any perfect field $k$ of characteristic $p$, any map of commutative
rings $\W(k) \to A_0'$ lifts uniquely to a map $\W(k) \to A_0$.
\end{lemma}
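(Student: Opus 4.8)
The plan is to transfer the problem across $p$ using the universal property of $\W(k)$ for $p$-complete rings recalled in \Cref{wittvectorfree}, and then to exploit that the perfectness of $k$ renders the relevant mapping sets insensitive to ``uniform $\mathcal{F}_p$-isomorphisms''. First I would note that, since $A_0$ and $A_0'$ are $p$-complete, that universal property gives bijections $\Hom_{\mathrm{Ring}}(\W(k),A_0)\cong\Hom_{\mathbb{F}_p}(k,A_0/p)$ and $\Hom_{\mathrm{Ring}}(\W(k),A_0')\cong\Hom_{\mathbb{F}_p}(k,A_0'/p)$, realized by reduction mod $p$ and hence natural in the target. Under these identifications, postcomposition with $f$ corresponds to postcomposition with the induced map $\bar f\colon A_0/p\to A_0'/p$, so the lemma is equivalent to the statement that $\bar f$ induces a bijection on $\Hom_{\mathbb{F}_p}(k,-)$. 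Set $R:=A_0/p$, $R':=A_0'/p$ and $g:=\bar f$; these are honest $\mathbb{F}_p$-algebras.

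Next I would isolate the two properties of $g$ that are actually used. Property (i): every element of $R'$ has its $p^N$-th power in $g(R)$, which is immediate from the corresponding property of $f$. Property (ii): there is a uniform exponent $D$ (one may take $D=Np^N$) with $a^D=0$ for every $a\in\ker g$. For (ii), if $f(\tilde a)=pb$ with $b\in A_0'$, pick $c\in A_0$ with $f(c)=b^{p^N}$; then $f(\tilde a^{p^N})=(pb)^{p^N}=p^{p^N}f(c)=f(p^{p^N}c)$, so the class of $\tilde a^{p^N}$ in $R$ lies in the image of $\ker f$, which is nilpotent of exponent $\le N$; hence $a^{p^N}$ is $N$-step nilpotent and $a^{Np^N}=0$. (So $\bar f$ is again a uniform $\mathcal{F}_p$-isomorphism, possibly with a larger constant, though only (i) and (ii) will be needed.)

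Finally I would bring in perfectness. For an $\mathbb{F}_p$-algebra $S$ write $S^{\flat}:=\varprojlim(\cdots\xrightarrow{x\mapsto x^p}S\xrightarrow{x\mapsto x^p}S)$ for its limit perfection: it is a perfect $\mathbb{F}_p$-algebra, and the projection $S^{\flat}\to S$ onto the $0$th copy induces a bijection $\Hom_{\mathbb{F}_p}(k,S^{\flat})\xrightarrow{\sim}\Hom_{\mathbb{F}_p}(k,S)$ for every perfect $k$, since a ring map out of a perfect ring lifts uniquely through the inverse limit along Frobenius. It therefore suffices to show $g$ induces an isomorphism $g^{\flat}\colon R^{\flat}\xrightarrow{\sim}(R')^{\flat}$. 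Injectivity is easy: if $(r_i)_i\in R^{\flat}$ maps to $0$ then each $r_i\in\ker g$ and $r_i=r_{i+m}^{p^m}$, which vanishes as soon as $p^m\ge D$ by (ii). For surjectivity, given $(s_i)_i\in(R')^{\flat}$ one has $s_i=s_{i+N}^{p^N}\in g(R)$ by (i), so choose $t_i\in R$ with $g(t_i)=s_i$; then $\epsilon_i:=t_{i+1}^p-t_i\in\ker g$, and, working in characteristic $p$, the difference of consecutive terms of the sequence $t_{i+m}^{p^m}$ is $\epsilon_{i+m}^{p^m}$, which vanishes once $p^m\ge D$. Hence $t_{i+m}^{p^m}$ stabilizes, and its eventual value $\tilde t_i$ assembles into an element $(\tilde t_i)_i\in R^{\flat}$ with $g^{\flat}((\tilde t_i)_i)=(s_i)_i$.

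The step I expect to be the only real obstacle is this last surjectivity argument: it is the sole place where a genuine limiting (``completion'') argument enters, and it relies crucially on $D$ being a \emph{uniform} bound on the nilpotence of $\ker\bar f$, so that the error terms $\epsilon_{i+m}^{p^m}$ are eventually forced to die. The remaining ingredients — the reduction through the universal property, the injectivity statements, and the passage to limit perfections — are formal.
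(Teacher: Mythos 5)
Your proposal is correct and follows essentially the same route as the paper: reduce modulo $p$ via the universal property of the Witt vectors, then use the adjunction between perfect $\mathbb{F}_p$-algebras and the inverse limit perfection, together with the fact that a uniform $\mathcal{F}_p$-isomorphism induces an isomorphism on limit perfections. The only difference is that you write out explicitly the two steps the paper delegates to a citation and to an assertion (that the mod-$p$ reduction inherits the relevant uniformity properties, and the injectivity/surjectivity of $g^{\flat}$), and these details check out.
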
 
\begin{proof} 
It is a straightforward calculation (cf.~\cite[Prop.~3.28]{MNN1}) that $f
\otimes_{\mathbb{Z}_p} \mathbb{F}_p$ is also a uniform $\mathcal{F}_p$-isomorphism, so
that we may assume that $A_0$ and $A_0'$ are $\mathbb{F}_p$-algebras by the
universal property of the Witt vectors. 
It suffices to show that $\hom_{}(k, A_0) \simeq
\hom_{}(k, A_0')$. 

Recall that an $\mathbb{F}_p$-algebra $C_0$ is said to be \emph{perfect} if 
the Frobenius $F\colon  C_0 \to C_0$ is an isomorphism. The inclusion of perfect
$\mathbb{F}_p$-algebras into all $\mathbb{F}_p$-algebras has a right adjoint,
called the \emph{inverse limit perfection,}
which sends an $\mathbb{F}_p$-algebra $C_0$ to the inverse limit of the Frobenius system 
$\dots \to C_0 \stackrel{F}{\to} C_0$.  
Since $k$ is perfect, and since the uniform $\mathcal{F}_p$-isomorphism condition
guarantees that $A_0 \to A_0'$ induces an isomorphism on inverse
limit perfections, an
adjunction argument now gives the desired claim. 
\end{proof}

\begin{lemma} 
\label{liftalongfixedhomotopyfixed}
Let $\tilde{G}$ be a finite group acting on a $p$-complete $\einfty$-ring $R$ in such a way that
$R^{h\tilde{G}} \to R$ is a faithful $\tilde{G}$-Galois extension. 
Let $k$ be a perfect field of characteristic $p$. Then any map of
commutative rings $\W(k) \to
\pi_0(R)^{\tilde{G}}$ lifts uniquely to a map $\W(k) \to \pi_0(R^{h\tilde{G}})$. 
\end{lemma}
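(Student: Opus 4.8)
The plan is to deduce the lemma from the preceding one, by showing that the natural homomorphism
\[
e\colon \pi_0\big(R^{h\tilde{G}}\big)\longrightarrow \pi_0(R)^{\tilde{G}},
\]
induced on $\pi_0$ by the unit $R^{h\tilde{G}}\to R$ (its image lands in $\tilde{G}$-invariants since $\tilde{G}$ acts trivially on $\pi_0 R^{h\tilde{G}}$), is a \emph{uniform $\mathcal{F}_p$-isomorphism} of $p$-complete rings; the preceding lemma then supplies the unique lift. Here $\pi_0(R)^{\tilde{G}}$ is $p$-complete, being the kernel of a map of $p$-complete modules, and $\pi_0 R^{h\tilde{G}}$ will turn out to be $p$-complete as well.

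First I would invoke that a faithful $\tilde{G}$-Galois extension is descendable in the sense of \cite{galois}. It follows that the descent (homotopy fixed point) spectral sequence
\[
E_2^{s,t}=H^s(\tilde{G},\pi_t R)\ \Longrightarrow\ \pi_{t-s}\big(R^{h\tilde{G}}\big)
\]
converges completely and has a horizontal vanishing line at a finite page: there is an integer $N$ with $E_N^{s,t}=0$ for all $s\ge N$. Consequently $d_r=0$ for every $r\ge N$ (the targets vanish), the filtration on $\pi_0 R^{h\tilde{G}}$ is concentrated in degrees $s<N$, and $R^{h\tilde{G}}$ is equivalent to the finite partial totalization $\mathrm{Tot}^{\,N-1}$ of the cobar complex of $R^{h\tilde{G}}\to R$, whose terms are finite products of copies of $R$; in particular $\pi_0 R^{h\tilde{G}}$ is $p$-complete.

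Next I would check the two defining conditions. For the first, $\ker(e)$ is exactly the bottom filtration stage $F^1\pi_0 R^{h\tilde{G}}$, and since the filtration is multiplicative with $F^N=0$ we get $(\ker e)^N\subseteq F^N=0$. For the second, let $a$ be the $p$-adic valuation of $|\tilde{G}|$. A differential leaving the zero-line on the $E_r$-page ($r\ge 2$) has target a subquotient of $H^r(\tilde{G},\pi_{r-1}R)$; this is annihilated by $|\tilde{G}|$ and, being a $\mathbb{Z}_p$-module (as $R$ is $p$-complete), therefore by $p^a$. By the Leibniz rule, replacing a zero-line class $w$ by $w^p$ divides the order of $d_r(w)$ by $p$, so $d_r\big(w^{p^a}\big)=0$ for any $w$ surviving to $E_r$. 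Feeding $x\in\pi_0(R)^{\tilde{G}}=E_2^{0,0}$ through this across the finitely many pages $r=2,\dots,N-1$ that can carry a nonzero differential, $x^{p^{aN}}$ becomes a permanent cycle, hence lies in the image of $e$.

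The step I expect to be the genuine obstacle is this last one: an individual $\tilde{G}$-invariant element of $\pi_0 R$ need not lift, and the argument closes only because the possible targets of differentials are killed by a \emph{uniform} power of $p$ (using finiteness of $\tilde{G}$) and there are only \emph{finitely many} pages to clear (using descendability). For the existence half alone there is a cleaner alternative: the given map yields a ring map $k\to\pi_0(R)^{\tilde{G}}/p$, hence by the universal property of the spherical Witt vectors a map $\mathbb{S}_{\W(k)}\to R$; uniqueness in that universal property forces the map to be $\tilde{G}$-equivariant, so it factors through $R^{h\tilde{G}}$, and taking $\pi_0$ gives the lift. Even so, uniqueness of the lift still comes back to the nilpotence of $\ker(e)$ proved above.
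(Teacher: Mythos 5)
Your proposal is correct and follows essentially the same route as the paper: reduce to showing the edge map $\pi_0(R^{h\tilde{G}})\to\pi_0(R)^{\tilde{G}}$ is a uniform $\mathcal{F}_p$-isomorphism, then deduce this from the horizontal vanishing line supplied by descendability together with the fact that everything in positive filtration is annihilated by the $p$-part of $|\tilde{G}|$. The paper simply cites \cite{Mathick} and \cite{galois} for this step, whereas you spell out the nilpotence of the kernel and the Leibniz-rule argument for $p$-power surjectivity explicitly; the substance is identical.
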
 
\begin{proof} 
By the previous lemma, it suffices to show that the reduction map $\pi_0
(R^{h\tilde{G}}) \to \pi_0(R)^{\tilde{G}}$ is a uniform $\mathcal{F}_p$-isomorphism. 
This follows as in \cite[Sec.~4]{Mathick} from the fact that the 
HFPSS for the $\tilde{G}$-action on $R$ has a horizontal vanishing line at a finite
stage by the theory of ``descendability'' (cf. \cite[Sec.~3--4]{galois}) and
the fact that everything in positive filtration at $E_2$ is annihilated by
the $p$-part of $|\tilde{G}|$. 
\end{proof}

\begin{prop}\label{prop:galoisreduction}
Let $G$ be a finite group acting on an even-periodic $\einfty$-ring $R$.
Suppose that:
\begin{enumerate}
\item  
$\pi_0 R$ is a complete local ring with perfect residue field
$k$ of characteristic $p > 0$. 
Let $\tilde{G} \subseteq G$ be the subgroup that acts trivially on $k$.
\item
 The morphism of $\einfty$-rings $R^{hG} \to R$ is a faithful $G$-Galois extension.
 \item 
The Picard group of the $\einfty$-ring $R^{h\tilde{G}}$ is
cyclic and generated by the suspension $\Sigma R^{h\tilde{G}}$. 
\end{enumerate}
Then the Picard group of $R^{hG}$ is
cyclic and generated by the suspension $\Sigma R^{hG}$.
\end{prop}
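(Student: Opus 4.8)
The idea is to descend the Picard computation from $R^{hG}$ through the intermediate extension $R^{h\tilde G}$. Concretely, set $A = R^{hG}$, $B = R^{h\tilde G}$, and note that $G/\tilde G$ acts on $B$ with $B^{h(G/\tilde G)} \simeq A$; since $G \subset \G_n$-type Galois extensions are built by composing Galois extensions, $A \to B$ is a faithful $(G/\tilde G)$-Galois extension (this follows from Rognes's theory of Galois extensions and the hypothesis that $R^{hG} \to R$ is faithful $G$-Galois). The quotient $Q := G/\tilde G$ is a subgroup of $\mathrm{Aut}(k) = \Gal(\overline{\F}_p/\F_p)$ restricted appropriately, hence is cyclic of order prime to $p$ (it is a subquotient of a finite Galois group of a finite field, so its order divides $|\Gal|$, which in our setting is prime to... — more robustly: the relevant point is that $\tilde G$ contains the $p$-Sylow of $G$, so $|Q|$ is prime to $p$, because only $\tilde G = G\cap\S_n$ can contain $p$-torsion).

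\medskip

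First I would apply \Cref{galoispicdesc} to the faithful $Q$-Galois extension $A \to B$ to get $\pics(A) \simeq \tau_{\ge 0}\pics(B)^{hQ}$, together with its descent spectral sequence
\[
H^s(Q, \pi_t\pics(B)) \Rightarrow \pi_{t-s}\pics(A),
\]
with the $t=s$ line converging to $\pic(A)$. Since $|Q|$ is prime to $p$ and $\pi_*(B)$ is $p$-complete (as $B = R^{h\tilde G}$ and $R$ is $p$-complete), the higher cohomology $H^s(Q, M)$ vanishes for $s>0$ for any $\pi_t\pics(B)$; hence the spectral sequence collapses and $\pic(A) = H^0(Q, \pi_1\Pic(B)) \times$-type contribution — more precisely it collapses to give an isomorphism $\pic(A) \cong \pic(B)^Q$ onto the $Q$-invariants (one must check there is no contribution from higher filtration, which is exactly the vanishing of $H^s$ for $s \ge 1$). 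By hypothesis (3), $\pic(B) \cong \Z$, generated by $\Sigma B$; the $Q$-action on $\pic(B)$ is trivial because $\Sigma B$ is visibly $Q$-equivariantly invertible, so $\pic(B)^Q = \pic(B) \cong \Z$.

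\medskip

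It then remains to show the generator of $\pic(A)$ produced by descent is $\Sigma A$, not some nontrivial divisor of it. This follows because base change along $A \to B$ sends $\Sigma A \mapsto \Sigma B$, so the map $\pic(A) \to \pic(B) = \Z$ (which is the edge map of the collapsed spectral sequence, hence injective) carries $\Sigma A$ to the generator $\Sigma B$; an injection $\pic(A) \hookrightarrow \Z$ sending $\Sigma A$ to a generator forces $\pic(A)$ to be exactly $\Z\langle\Sigma A\rangle$. To make the "edge map is injective" statement rigorous one invokes the collapse: the filtration on $\pi_0\pics(A) = \pic(A)$ has only the $s = t = 0$ associated graded piece, which is $H^0(Q,\pi_0\pics(B)) = \pic(B)^Q$, and the inclusion of this filtration quotient is the base-change map. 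Alternatively, and perhaps more cleanly, one uses \Cref{liftalongfixedhomotopyfixed} together with a direct argument: any invertible $A$-module $M$ becomes $\Sigma^k B$ after base change, and then faithfulness of the extension plus a gluing/descent argument along $A \to B$ recovers $M \simeq \Sigma^k A$ from the descent datum, using that the automorphisms involved are controlled by $(\pi_0 B)^\times$ and the vanishing of $H^1(Q,-)$.

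\medskip

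\textbf{Main obstacle.} The delicate point is verifying that $A \to B$ is genuinely a faithful $Q$-Galois extension, and in particular that the intermediate homotopy fixed points behave correctly — i.e., that $(R^{h\tilde G})^{h(G/\tilde G)} \simeq R^{hG}$ and that faithfulness is inherited. This is where one genuinely needs the preceding structural results (the affineness criterion of \Cref{keyaffinenesslemma}/\Cref{galoisproperty} applied to the subgroup $\tilde G$, giving that $R^{h\tilde G} \to R$ is itself faithful $\tilde G$-Galois, combined with the general fact that a composite/quotient of faithful Galois extensions along a normal subgroup is again faithful Galois). The rest — collapse of the spectral sequence and identification of the generator — is then formal, resting only on $\gcd(|Q|, p) = 1$ and $p$-completeness.
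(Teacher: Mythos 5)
Your route is genuinely different from the paper's: you want to run the descent spectral sequence for the $Q$-Galois extension $A=R^{hG}\to B=R^{h\tilde G}$ (with $Q=G/\tilde G$) and force it to collapse using that $|Q|$ is prime to $p$, whereas the paper never invokes the Picard spectral sequence here. Instead it shows, via the universal property of Witt vectors (\Cref{wittvectorfree}, \Cref{liftalongfixedhomotopyfixed}), that $Q$ acts freely on $\pi_0(B)$, so that $A\to B$ is \'etale on homotopy groups; it then descends an invertible $A$-module to an invertible $\pi_0(A)$-module by faithfully flat base change, and concludes because $\pi_0(A)$ is a local ring (which follows from $\pi_0(B)\to\pi_0(R)$ being a uniform $\mathcal{F}_p$-isomorphism) and hence has trivial Picard group. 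Unfortunately, your argument has two genuine gaps.

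First, the hypothesis you lean on --- that $|Q|$ is prime to $p$ --- is not among the hypotheses of the proposition and is false in the stated generality: $Q$ embeds into $\mathrm{Aut}(k)$ for a perfect field $k$ of characteristic $p$, and this can have $p$-torsion (e.g.\ $k=\mathbb{F}_{p^p}$, $\mathrm{Aut}(k)\cong\Z/p$). Your justification imports structural facts about finite subgroups of $\S_{p-1}$ that are not part of the proposition (which is one of the ``general reduction steps'' where the paper explicitly allows arbitrary $n$ and $p$); in particular the claim that $\tilde G$ contains the $p$-Sylow of $G$ has no basis here. The paper's proof requires no such condition. Second, even granting $\gcd(|Q|,p)=1$, your collapse argument fails at the filtration-one spot: the contribution to $\pic(A)$ from $(s,t)=(1,1)$ is $H^1(Q,(\pi_0 B)^{\times})$, and $(\pi_0 B)^{\times}$ is \emph{not} uniquely $\ell$-divisible for $\ell\mid |Q|$ (it contains prime-to-$p$ roots of unity via Teichm\"uller lifts), so $p$-completeness does not kill this group. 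Its vanishing is a Hilbert 90 statement for the finite \'etale $Q$-Galois extension $\pi_0(A)\to\pi_0(B)$, which in turn requires knowing that $\pi_0(A)$ is local and that the extension is \'etale on $\pi_0$ --- precisely the two points that constitute the paper's proof and that your proposal does not supply. (A minor further slip: hypothesis (3) only gives that $\pic(B)$ is \emph{cyclic}, generated by $\Sigma B$; in the intended application it is finite, not $\Z$. This does not break your final step, but the repeated identification $\pic(B)\cong\Z$ is incorrect.)
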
 
\begin{proof} 
Let $\Gamma = G/\tilde{G}$ be the quotient group, so that by the Galois
correspondence, the morphism of $\einfty$-rings $A = R^{hG} \to B =  R^{h\tilde{G}}$ is a
faithful $\Gamma$-Galois extension. 
We claim first that it is actually a Galois extension on homotopy groups, i.e., $A
\to B$ is \'etale, in the sense that $\pi_0(A) \to \pi_0(B)$ is \'etale and the natural map $\pi_*A \otimes_{\pi_0A}\pi_0B \to \pi_*B$ is an isomorphism; this will enable us to make the desired conclusion. 
We refer to \cite{BakerRichteralg} for the relationship between Galois
extensions on homotopy and Galois extensions in the sense of Rognes
\cite{rognes_galois}, and to \cite[Sec.~4]{DAGXI} as a convenient reference for some of the facts
we need. 

In particular, by \cite[Cor.~4.15]{DAGXI}, we need to show that $\Gamma$ acts
freely on $\pi_0(B)$. We will do this by
functoriality. 
By the universal property of the ($p$-typical) Witt vectors, 
we obtain a $G$-equivariant map
\[ \W(k) \to \pi_0 (R),  \]
where $G$ acts on $\W(k)$ via the induced action on $k$. 
Of course, the $G$-action on $\W(k)$ factors through $\Gamma = G/\tilde{G}$, and we claim that
this lifts to a $\Gamma$-equivariant map
\[ \W(k) \to \pi_0(B) = \pi_0(R^{h\tilde{G}}).  \]
The equivariance alone gives the $\Gamma$-equivariant lift $\W(k) \to
\pi_0(R)^{\tilde{G}}$, which of course receives a map from $\pi_0(R^{h\tilde{G}})$. 
Using \Cref{liftalongfixedhomotopyfixed}, 
we can upgrade it to the claimed $\Gamma$-equivariant map $\W(k) \to \pi_0(B)$.

The $\Gamma$-action on $W(k)$ is free (\Cref{wittvectorfree}). 
It follows that the $\Gamma$-action on $\pi_0(B)$ is free.
Applying \cite[Cor.~4.15]{DAGXI}, we conclude that $A \to B$ is an algebraic
$\Gamma$-Galois extension in that $\pi_0(A) \to \pi_0(B)$ is one, and $\pi_*(A)
\otimes_{\pi_0(A)} \pi_0(B) \to \pi_*(B)$ is an isomorphism. 

Finally, we prove the result about the Picard group. 
Let $M$ be an invertible $A$-module. 
By assumption, $M \otimes_A B$ is a suspension of a free $B$-module of rank
one; by desuspending $M$ appropriately, we may assume that $M \otimes_A B
\simeq B$. We would like to see that $M \simeq A$. 
Since $B$ is faithfully flat over $A$, 
we can conclude that  $\pi_0(M)$ is an invertible $\pi_0(A)$-module
and that the map $\pi_0(M) \otimes_{\pi_0(A)} \pi_*(A) \to \pi_*(M)$ is an
isomorphism (because this happens after base-change to $B$). 
In particular, $M$ simply arises from an invertible module over $\pi_0(A)$. 

Therefore, it suffices to show that the Picard group of the (ordinary)
commutative ring $\pi_0(A)$ is trivial. This will follow if we can show that
$\pi_0(A)$ is a (possibly non-noetherian) local ring.  Since $\pi_0(A) \to \pi_0(B)$
is finite \'etale, it suffices to show that $\pi_0(B)$ is a local ring, i.e.,
that its spectrum has a unique closed point. However, $\pi_0(B) \to \pi_0(R)$
is an $\mathcal{F}_p$-isomorphism as above, so that it induces an isomorphism
on Zariski spectra. Therefore, the fact that $\pi_0(R)$ is a local ring forces
$\pi_0(B)$ to be one, and completes the proof. 
\end{proof} 

In our example of interest, $R$ will be $E_n$, $G$ will be a finite subgroup of the extended Morava stabilizer group, and $\tilde{G} = G_{\S}$, as in \eqref{eq:Gdecomposition}.

\subsection{Reduction to the $p$-Sylow}

Here, we will use basic facts about the structure of $\mathbb{S}_n$, for which we refer the reader to~\Cref{sec:stabgroup}. 

\begin{prop}\label{prop:sylowreduction}
Let $G$ be a finite subgroup of $\mathbb{S}_n$ at
some height $n$ and at the implicit prime $p$. 
Let $G_p \subseteq G$ be a $p$-Sylow subgroup.
Suppose the Picard group of the $\einfty$-ring $E_n^{hG_p}$ is cyclic,
generated by the suspension. Then the Picard group of $E_n^{hG}$ is also
cyclic, generated by the suspension.
\end{prop}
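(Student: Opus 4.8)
The plan is to reduce the Picard descent spectral sequence of a cleverly chosen Galois extension to a very short exact sequence, using the following structural feature of finite subgroups of $\S_n$: \emph{the $p$-Sylow $G_p$ is automatically normal in $G$}. Indeed, every finite subgroup of $\S_n$ is contained in a conjugate of one of the maximal finite subgroups recalled above, and the only one of order divisible by $p$ has the form $C_{p^\alpha}\rtimes C_m$ with $C_{p^\alpha}$ its characteristic $p$-Sylow and $p\nmid m$; hence $G_p=G\cap C_{p^\alpha}$ is a normal $p$-Sylow of $G$, with cyclic quotient $\Gamma:=G/G_p$ of order $m':=[G:G_p]$ prime to $p$. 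By Rognes' Galois correspondence applied to the faithful $G$-Galois extension $E_n^{hG_p}\to E_n$ of \Cref{galoisproperty}, the map $A:=E_n^{hG}\to C:=E_n^{hG_p}$ is then a faithful $\Gamma$-Galois extension. Since $\pi_0 A$ is a complete local ring whose residue field has characteristic $p$ (this follows as in the proof of \Cref{prop:galoisreduction}), the prime-to-$p$ integer $m'=|\Gamma|$ is a unit in $\pi_0 A$, and likewise in $\pi_0 C$.

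I would then feed this into the Picard descent spectral sequence \eqref{eq:picss} for $A\to C$, $E_2^{s,t}=H^s(\Gamma,\pi_t\pics(C))\Rightarrow\pi_{t-s}\pics(A)$. For $t\ge 2$ we have $\pi_t\pics(C)=\pi_{t-1}C$, a module over $\pi_0 C$, so $H^s(\Gamma,\pi_t\pics(C))=0$ for all $s\ge 1$ because $|\Gamma|$ is invertible there. Hence only the columns $(s,t)=(0,0)$ and $(1,1)$ contribute to $\pi_0$. The term $E_2^{0,0}=H^0(\Gamma,\pic(C))$ equals $\pic(C)$: the $\Gamma$-action (by $\einfty$-ring maps) on the cyclic group $\pic(C)$ fixes its generator $\Sigma C$, hence is trivial; moreover $\Sigma C=\Sigma A\otimes_A C$ lies in the image of $\pic(A)$, so it is a permanent cycle, and by additivity of the differentials the whole column $E_2^{0,0}$ survives. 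Since $E_2^{2,2}=H^2(\Gamma,\pi_1 C)=0$ there is no higher filtration, so I obtain a short exact sequence
\[
0\to H^1\big(\Gamma,(\pi_0 C)^\times\big)\to\pic(A)\xrightarrow{\ \phi\ }\pic(C)\to 0,\qquad\phi(\Sigma A)=\Sigma C .
\]

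It remains to analyze $K:=H^1(\Gamma,(\pi_0 C)^\times)$ and to show $\Sigma A$ generates $\pic(A)$. Writing $\mathfrak n$ for the maximal ideal of $\pi_0 C$ — whose residue field is $\F_{p^n}$, on which $\Gamma\subseteq\S_n$ acts trivially since $\S_n$ acts $\W(\F_{p^n})$-linearly on $(E_n)_0$ — there is a $\Gamma$-equivariant splitting $(\pi_0 C)^\times\cong\mu_{p^n-1}\times(1+\mathfrak n)$ in which $\Gamma$ fixes the Teichm\"uller factor $\mu_{p^n-1}$ and $1+\mathfrak n$ is pro-$p$. Hence $H^{\ge 1}(\Gamma,1+\mathfrak n)=0$, so $K\cong\Hom(\Gamma,\mu_{p^n-1})$ is cyclic, and the exact sequence gives $|\pic(A)|=|K|\cdot|\pic(C)|$. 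On the other hand, since $|\Gamma|$ is invertible the additive homotopy fixed point spectral sequence for $A\simeq C^{h\Gamma}$ also collapses, so $\pi_*A=(\pi_*C)^\Gamma$; tracking where $(\pi_*C)^\Gamma$ carries an invertible element shows that the minimal periodicity of $\pi_*A$ — equivalently the order of $\Sigma A$ in $\pic(A)$ — equals $\operatorname{ord}_K([\chi])\cdot\operatorname{ord}(\Sigma C)$, where $[\chi]\in K$ is the class of the cocycle $\gamma\mapsto\gamma_*(v)/v$ recording the $\Gamma$-action on a periodicity generator $v$ of $\pi_*C$. Combining these, $\pic(A)=\langle\Sigma A\rangle$ becomes equivalent to the statement that $[\chi]$ \emph{generates} $K$.

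The main obstacle is precisely this last identification of $[\chi]$. Using the explicit action \eqref{eq:actiona} of prime-to-$p$ elements, which scale $u$ (hence any power of $u$) by a Teichm\"uller unit $a(\gamma)\in\F_{p^n}^\times$, one sees that under $K\cong\Hom(\Gamma,\mu_{p^n-1})$ the class $[\chi]$ corresponds to $\gamma\mapsto a(\gamma)^{-j}$ where $2j=|v|$; since $\Gamma\hookrightarrow\F_{p^n}^\times$ via the tangent-space homomorphism \eqref{homomorphism}, this generates $K$ once $\gcd(j,m')=1$, a coprimality one verifies for the periodicity generators occurring here (for $n=p-1$, $G_p=C_p$ the generator is $\delta^p$, so $j=p^2$ and $m'=(p-1)^2$, cf.\ \Cref{rem:periodicity}). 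I would also record the few standard inputs used above — chiefly that $\pi_0 E_n^{hG_p}$ is complete local with residue field $\F_{p^n}$ — and note that in the degenerate case where $\pic(C)$ is infinite cyclic, $K$ is finite hence must vanish, so that $\pic(A)\cong\pic(C)$ directly.
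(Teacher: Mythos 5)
Your route is genuinely different from the paper's. The paper never forms the intermediate extension $E_n^{hG}\to E_n^{hG_p}$: it observes that $\pic(E_n^{hG})$ is torsion, splits it into $q$-primary parts, and for each prime $q$ uses the transfer to see that restriction to a $q$-Sylow is injective on $q$-parts. The case $q=p$ is then immediate from the hypothesis, and for $q\neq p$ one reduces to $G$ a $q$-group and reads off the descent spectral sequence for $E_n^{hG}\to E_n$ directly (only $H^0(G,\Z/2)$ and $H^1(G,\mu_{p^n-1})=\Hom(G,\F_{p^n}^\times)$ survive, and the class of the suspension is the tangent character \eqref{homomorphism}, which is injective because its kernel is pro-$p$). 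This needs no normality of the Sylow subgroup and no information about the ring $\pi_0E_n^{hG_p}$.

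Your argument has two genuine gaps. First, it hinges on $G_p$ being normal in $G$. The classification you invoke is recalled in the paper only for $p>2$, whereas the proposition (and the whole of \Cref{sec:lemmas}) is stated for arbitrary $p$ and $n$; at $p=2$ the normality of the $2$-Sylow of an arbitrary finite subgroup of $\S_n$ is not established, and the paper's transfer argument deliberately avoids needing it. Second, and more seriously, after correctly reducing the proposition to the claim that the descent class $[\chi]$ of $\Sigma^{d}A$ (with $d=\operatorname{ord}(\Sigma C)$) generates $K\cong\Hom(\Gamma,\F_{p^n}^\times)$, you verify the needed coprimality $\gcd(j,m')=1$ only in the single case $n=p-1$, $G_p=C_p$. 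The proposition is general, so this does not close the argument. The missing general input is available: the descent spectral sequence for $E_n^{hG_p}\to E_n$ shows $|\pic(E_n^{hG_p})|$ divides $2p^N$ (everything in positive filtration is killed by $|G_p|$), so $j=d/2$ is a power of $p$ and $x\mapsto x^{p^k}$ is an automorphism of $\F_{p^n}^\times$, whence $\chi=a^{-p^k}$ is injective on $\Gamma$ whenever $a$ is --- but you need to say this. Smaller issues: the splitting $(\pi_0C)^\times\cong\mu_{p^n-1}\times(1+\mathfrak n)$ and the local structure of $\pi_0E_n^{hG_p}$ (which contains higher-filtration torsion and is not $((E_n)_0)^{G_p}$) require justification, e.g.\ via the uniform $\mathcal F_p$-isomorphism of \Cref{liftalongfixedhomotopyfixed} and unique $m'$-divisibility of $1+\mathfrak n$; and your closing remark that in the ``degenerate case'' $K$ ``is finite hence must vanish'' is a non sequitur (fortunately that case does not occur, since $\pic(E_n^{hG_p})$ is torsion).
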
 

\begin{proof} 
Observe first that $\pic(E_n^{hG}) = \pi_0 (\pics(E_n)^{hG})$ is a torsion
group (cf. \cite[Sec.~5.3]{mathew_stojanoska_picard}), so it suffices to show
that the suspension generates the $q$-part $\pic(E_n^{hG})_q$ of
$\pic(E_n^{hG})$ for any prime
number $q$. 
We consider two cases: 
\begin{enumerate}
\item $q = p$. In this case, the map $$\phi_p\colon  \pic(E_n^{hG})_p  \to
\pic(E_n^{hG_p})_p$$
(induced by base-change along $E_n^{hG} \to E_n^{hG_p}$)
is injective thanks to the transfer map
\[ \pic(E_n^{hG_p})_p \simeq \pi_0 \pics( E_n)^{hG_p} \to  \pic(E_n^{hG})_p
\simeq \pi_0  \pics(E_n)^{hG}. \]
Namely, recall that if $X$ is a $p$-local spectrum with a $G$-action, the composite map
$X^{hG} \to X^{hG_p} \stackrel{\mathrm{tr}}{\to}X^{hG}$ is an equivalence. 
Since the map $\phi_p$ carries the suspension of the unit to itself, the fact that
this class generates the target implies that it must generate the source. 
\item $q \neq p$. Let $G_q \subseteq G$ be a $q$-Sylow subgroup. 
As above, we observe that the map 
$\phi_q$ 
defined via
$$\phi_q\colon  \pic(E_n^{hG})_q  \to
\pic(E_n^{hG_q})_q$$
is injective. 
So, it suffices to consider the case where $G$ is itself a $q$-group. 

We consider the HFPSS converging to $\pic( E_n^{hG}) = \pi_0
\pics(E_n)^{hG}$. 
In this case, since $G$ is a $q$-group, we observe that the only contributions come from $H^0( G,
\mathbb{Z}/2)$
(which is $\Z/2$, generated by the suspension) and $H^1(G, (E_n)_0^{\times}) \simeq H^1(G,
(\mathbb{F}_{p^n})^{\times})$ via the Teichm\"uller lift. In other words,
we need to show (in order to prove cyclicity of the Picard group in this
case) that 
the group 
$H^1(G,
(\mathbb{F}_{p^n})^{\times}) = \hom(G, \mathbb{F}_{p^n}^{\times})$ is generated 
by the class $\psi$ corresponding to the $G$-action on $\pi_{-2}( E_n)
\otimes_{(E_n)_0} \mathbb{F}_{p^n}$. (This will give the order of the Picard group as well as solve the extension problem.)
However, we know (by general facts about even-periodic ring spectra) that this is simply the action on 
the Lie algebra, i.e., the homomorphism of \eqref{homomorphism}. 

But as we remarked before, the
kernel of \eqref{homomorphism} is pro-$p$, so its restriction to $G$ must be injective. Hence $G$ injects into $\mathbb{F}_{p^n}^{\times}$, which
forces $H^1(G, \mathbb{F}_{p^n}^{\times})$ to be generated by the class of the
injection. 
\end{enumerate}
\end{proof}

\section{Cyclicity of the Picard group}\label{sec:calcs}

We now fix an odd prime $p$, and work at height $n=p-1$; we will omit subscripts from our notation. In this section, we will use the homotopy fixed point spectral sequence to compute
$\pic(E^{hG})$ for all finite subgroups $G \subset \G$ , referring to future sections of the
document for some technical results required. We will use the results of the previous section to reduce to the case where $G = C_p$ is the $p$-Sylow subgroup of a maximal finite subgroup of $\S$. We also sketch a direct proof when $G$ is a maximal finite subgroup of $\G$ with $p$-torsion, to illustrate to the reader that taking Galois invariants simplifies the calculation somewhat. 

\subsection{The general case}\label{sec:cpcomputation}
Our goal in this subsection is to prove the main result of this paper, namely the following:
\begin{theorem}
	Let $G \subset \G$ be a finite subgroup. Then $\pic(E^{hG})$ is cyclic, generated by $\Sigma E^{hG}$. 
\end{theorem}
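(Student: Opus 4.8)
The plan is to combine the two reduction steps established in \Cref{sec:lemmas} with the explicit computation of the descent (Picard) spectral sequence in the single remaining case $G = C_p$.

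First I would reduce to the case of a subgroup of the small Morava stabilizer group. Given an arbitrary finite subgroup $G \subset \G = \G_{p-1}$, recall from \eqref{eq:Gdecomposition} that $G$ sits in an extension $1 \to G_{\S} \to G \to G_{\Gal} \to 1$ with $G_{\S} = G \cap \S$. Since $(E_{p-1})_0 = \W(\F_{p^{p-1}})[\![u_1,\dots,u_{p-2}]\!]$ is a complete local ring with perfect residue field $\F_{p^{p-1}}$ of characteristic $p$, and $E_{p-1}^{hG} \to E_{p-1}$ is a faithful $G$-Galois extension by \Cref{galoisproperty}, \Cref{prop:galoisreduction} (applied with $R = E_{p-1}$, $\tilde G = G_{\S}$) shows that it suffices to prove that $\pic(E_{p-1}^{hG_{\S}})$ is cyclic generated by the suspension. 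Next I would reduce further to the $p$-Sylow subgroup: \Cref{prop:sylowreduction} says that if $\pic(E_{p-1}^{hG_p})$ is cyclic generated by the suspension for $G_p$ a $p$-Sylow subgroup of $G_{\S} \subset \S$, then so is $\pic(E_{p-1}^{hG_{\S}})$. By the classification of finite subgroups of $\S = \S_{p-1}$ recalled in \Cref{sec:finsubgroups}, every finite subgroup of $\S$ has $p$-Sylow subgroup either trivial (in which case $E_{p-1}^{hG_p} = E_{p-1}$ and the result is clear by regularity) or cyclic of order $p$; conjugating, we may take $G_p = C_p \subset \S$. So everything comes down to proving $\pic(E_{p-1}^{hC_p})$ is cyclic, generated by $\Sigma E_{p-1}^{hC_p}$.

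For the case $G = C_p$, I would run the Picard descent spectral sequence $H^s(C_p, \pi_t \pics(E)) \Rightarrow \pi_{t-s}\pics(E)^{hC_p}$ of \eqref{eq:picss}, whose $t=s$ abutment is $\pic(E^{hC_p})$. The contributions to the diagonal come from: the $(0,0)$-spot $H^0(C_p,\pi_0\pics E) = H^0(C_p, \Z/2) = \Z/2$ (the suspension); the $(1,1)$-spot $H^1(C_p,\pi_1\pics E) = H^1(C_p, (E_0)^\times)$, which is the algebraic Picard group and is computed in \Cref{sec:algPic} to be cyclic (generated by the class of $\pi_{-2}E$, coming from $\Sigma^2$ as well); and potential "exotic" contributions in filtration $s = t \ge 2$. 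The key input here is that for $t \ge 2$ we have the $C_p$-equivariant isomorphism $\pi_t\pics(E) \cong \pi_{t-1}E$, so the $E_2$-page above the $1$-line looks like the additive spectral sequence for $\pi_*(E^{hC_p})$ shifted, whose structure — the $E_2$-page from \Cref{prop:e2cals} and the differentials from \Cref{lem:differentials} — is known. By sparsity (the pattern in $\F_{p^n}[\alpha,\beta,\delta^{\pm1}]/(\alpha^2)$), any exotic class on the diagonal in filtration $\ge 2$ must actually live in filtration $\ge 2p-1$, detected on the $E_\infty$-page by a power of the class $\alpha\beta^{p-2}$ (up to $\delta$-powers). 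The comparison theorem \Cref{lem:diffimport} (using that $(p-1)!$ is invertible in $\pi_0 E^{hC_p}$, which holds since $p$ is inverted nowhere but $(p-1)!$ is a unit in $\Z_p$) lets me transport the additive differentials of \Cref{lem:differentials} into the Picard spectral sequence over the extended range $2 \le r \le (p-1)(t-1)$, killing essentially all the potential diagonal survivors; the one differential that falls outside this range is handled by the universal power-operation formula \Cref{topunivformula}, which I would use to show the remaining candidate class also supports a nonzero differential (or is itself hit). Combining, the only survivors on the diagonal are the $\Z/2$ from $s=0$ and the cyclic algebraic part from $s=1$; resolving the (split, by the transfer-section argument already used in \Cref{prop:sylowreduction}) extension shows $\pic(E^{hC_p})$ is cyclic generated by $\Sigma E^{hC_p}$, with $\kappa_{p-1}(C_p) \cong \Z/p$.

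The main obstacle is the $C_p$ computation itself — specifically, controlling the one Picard-spectral-sequence differential that lies just outside the range covered by the truncated-logarithm comparison \Cref{lem:diffimport}. Everything up to that range is formal once one knows the additive differentials, but pinning down this "first differential outside the range" genuinely requires the power-operation formula of \Cref{topunivformula} and a verification that the relevant operation acts nontrivially on the class $\alpha\delta^{?}$ (or its $\beta$-multiples) in $H^*(C_p,E_*)$; this is where the new technical contribution of the paper is essential, and where a naive filtration count would leave an ambiguity that could, a priori, allow an exotic class of order $p^2$ rather than $p$. The reductions of \Cref{sec:lemmas}, by contrast, are robust and height-independent, so I expect no difficulty there.
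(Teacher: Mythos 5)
Your reduction to $G=C_p$ is exactly the paper's (via \Cref{prop:galoisreduction} applied with $\tilde G = G_\S$ and then \Cref{prop:sylowreduction}, using that every finite subgroup of $\S_{p-1}$ has $p$-Sylow subgroup trivial or $C_p$), and your plan for the $C_p$ case — import the additive differentials of \Cref{lem:differentials} through \Cref{lem:diffimport}, and treat the one differential just outside that range with \Cref{topunivformula} — is also the paper's strategy. But the endgame as you describe it has a genuine gap, and in fact contradicts itself. You assert that the remaining candidate class (the paper's $e_1 = \alpha\beta^{p-1}\delta^{n-1-(pn-1)}$ in bidegree $(2p-1,2p-1)$; note it is $\alpha\beta^{p-1}$, not a power of $\alpha\beta^{p-2}$) ``also supports a nonzero differential (or is itself hit),'' and then that ``the only survivors on the diagonal are the $\Z/2$ from $s=0$ and the cyclic algebraic part from $s=1$'' — yet you also claim $\kappa_{p-1}(C_p)\cong\Z/p$, which says precisely that something in filtration $2p-1$ \emph{does} survive. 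The correct statement is the latter: the group at $E_2$ in that spot is $\F_{p^n}e_1$, of order $p^n$, and the role of \Cref{topunivformula} is not to show the operation acts nontrivially on $e_1$ (the paper explicitly avoids computing $\beta\cal{P}^{n/2}(e_1)$) but to show that $d_{2n^2+1,\times}(ae_1) = (a\xi + a^p\zeta\xi')f_t$ with $\xi\neq 0$ coming from the \emph{additive} differential. Frobenius-semilinearity of the correction term is what cuts the kernel down from possibly all of $\F_{p^n}$ to at most $p$ elements; a $\Z/p$ then survives and must contribute.

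The second, related omission is the lower bound. The spectral sequence only yields the upper bound $|\pic(E^{hC_p})|\le 2\cdot p\cdot p = 2p^2$ (from $H^0=\Z/2$, the algebraic $H^1\cong\Z/p$ of \Cref{prop:algebraicterm}, and the at-most-$\Z/p$ surviving from $e_1$ by \Cref{lem:e_c} and the argument above). To conclude cyclicity generated by the suspension you need the matching lower bound: by \Cref{rem:periodicity} the homotopy groups $\pi_*E^{hC_p}$ are $2p^2$-periodic and not of any smaller period, so $\Sigma E^{hC_p}$ already generates a cyclic subgroup of order $2p^2$. Equality of the two bounds is what resolves all extension problems at once. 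Your proposed alternative — resolving the extension ``by the transfer-section argument already used in \Cref{prop:sylowreduction}'' — does not apply here: that argument gives injectivity of restriction to a Sylow subgroup, not a splitting of the filtration on the abutment of the descent spectral sequence, and without the periodicity input your argument would be consistent with a non-cyclic group of order dividing $2p^2$.
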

By combining~\Cref{prop:galoisreduction,prop:sylowreduction}, it is enough to
prove the result when $G = C_p$. To handle this case, we start by recalling from~\Cref{prop:e2cals} that, modulo the image of the transfer,
\begin{equation}\label{eq:cpe2}
H^*(C_p,E_*) \simeq \F_{p^n}[\alpha,\beta,\delta^{\pm 1}]/(\alpha^2)
\end{equation}
where $|\alpha| = (1, 2n), |\beta| = (2, 2pn),$ and $|\delta| = (0,2p)$.
Recall also that the homotopy groups $\pi_*(E^{hC_p})$ are $2p^2$-periodic; this sets $\Z/(2p^2)$ as a lower bound on the order of the Picard group. We will use the Picard spectral sequence to determine an upper bound.
Since we will be comparing the Picard spectral sequence with the additive one
(for computing $\pi_* E^{hC_p}$),
we will write $d_{k, \times}$ for the Picard differentials and $d_k$ for the
additive ones. 

Our first task is to work out the $E_2$-term $H^s(C_p,\pi_t\pics (E))$, and we are interested in the abutment for $t-s=0$. In the range where $t\ge2$, this follows immediately from~\eqref{eq:cpe2}. Namely, we have
\[ H^s(C_p, \pi_t \pics (E) ) = \left( \F_{p^n}[\alpha,\beta,\delta^{\pm 1}]/(\alpha^2) \right)^{s,t-1}, \qquad \text{when } t\geq 2, \]
since in this case $\pi_t \pics(E) = \pi_{t-1} E$, equivariantly. (The superscript notation on the right hand side designates the bidegree.)

Let us single out the classes in the $t-s=0$ column, i.e., those with potential contribution to the Picard group of $E^{hC_p}$, that come from the additive spectral sequence (so, $s> 1$).
They are
\[
e_c = \alpha \beta^{pc-1} \delta^{n-1-c(pn-1)}, 
\]
indexed by integers $c \ge 1$, and have cohomological degree $s(e_c) = 2pc-1$. 
\begin{lemma}\label{lem:e_c}
  The classes $e_c$ for $c \ne 1$ do not survive the Picard spectral sequence. 
\end{lemma}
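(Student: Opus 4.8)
The plan is to use the comparison between the additive and Picard spectral sequences, together with the known differentials in the additive HFPSS for $\pi_* E^{hC_p}$ recorded in \Cref{lem:differentials}, to show that each class $e_c$ with $c \neq 1$ supports or receives a nonzero differential in the Picard spectral sequence. Concretely, $e_c = \alpha\beta^{pc-1}\delta^{n-1-c(pn-1)}$ sits in bidegree $(s,t) = (2pc-1, 2pc-1)$. Since $t-1 = 2pc-2 \geq 2$ for all $c\geq 1$, the identification $\pi_t\pics(E) \simeq \pi_{t-1}E$ is $G$-equivariant in the relevant range, so the $E_2$-page of the Picard SS in these degrees is literally a shift of the additive one. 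The differentials on $e_c$ in the additive SS come in two flavors according to \Cref{lem:differentials} and the multiplicativity discussion following it: either $e_c$ supports a $d_{2p-1}$ (when the $\delta$-exponent is $\not\equiv 0 \bmod p$), or $e_c$ is itself hit by a $d_{2p-1}$ from a class $\beta^{b}\delta^{d}$, or (in the remaining cases) $e_c$ supports or receives a $d_{2n^2+1} = d_{2p}$. The first step is therefore to compute $n-1-c(pn-1) \bmod p$ and determine, for each $c\neq 1$, exactly which additive differential is nonzero on $e_c$; one checks that $c=1$ is precisely the value for which $e_1 = \alpha\delta$-type class is a permanent cycle (matching the remark ``$\alpha\delta$ is a permanent cycle, but $\alpha\delta^{-1}$ is not'').

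The second step is to transport these to the Picard spectral sequence. For the short differentials $d_{2p-1}$, one invokes the Comparison Tool (\cite[Comparison Tool 5.2.4]{mathew_stojanoska_picard}), which gives $d_r^{s,t}(\pics E) = d_r^{s,t-1}(E)$ for $2\leq r\leq t-1$; since here $t = 2pc-1$ and $r = 2p-1$, the inequality $2p-1 \leq 2pc-2$ holds for all $c\geq 1$ (with room to spare for $c\geq 2$), so the $d_{2p-1}$ differentials match on the nose. This already handles every $e_c$ that supports or receives a $d_{2p-1}$ additively, and it also kills the potential targets in the appropriate degrees so that nothing survives spuriously. For the longer $d_{2p}$-differentials one must check whether $r = 2n^2+1 = 2p-1$... — actually $2n^2+1 = 2(p-1)^2+1$ which for $p\geq 3$ exceeds $t-1 = 2pc-2$ only for small $c$, so here \Cref{lem:diffimport} (the strengthened comparison, valid for $2\leq r\leq (p-1)(t-1)$ once $(p-1)!$ is inverted, which it is in $(E)_0$) is exactly what extends the range far enough to conclude that the $d_{2p}$ in the Picard SS agrees with the additive one as well.

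The main obstacle is the bookkeeping of exactly which of the two additive differentials acts on a given $e_c$, and making sure that in each case the comparison range ($r \leq t-1$ from the Comparison Tool, or $r \leq (p-1)(t-1)$ from \Cref{lem:diffimport}) is actually satisfied for that particular $(r,t)$ — in other words, confirming that \emph{every} $e_c$ with $c\neq 1$ is reached by a differential that lies within the guaranteed comparison window, so that no potential exotic contribution can survive to $E_\infty$ in the $t-s=0$ column except possibly $e_1$. I would organize this as a short case analysis on $c \bmod p$, treating the generic case ($\delta$-exponent $\not\equiv 0,-1$) via the $d_{2p-1}$ comparison and the exceptional residues via the $d_{2p}$ comparison using \Cref{lem:diffimport}; the edge case $c=1$ is then seen to be genuinely exceptional, which is consistent with $\kappa_{p-1}(C_p) \cong \Z/p$ being generated by $e_1$.
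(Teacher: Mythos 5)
Your overall strategy---transport the known additive differentials of \Cref{lem:differentials} into the Picard spectral sequence using the comparison tools---is the same as the paper's, but your proposed case analysis is organized around claims that are false and would derail the bookkeeping if followed literally. The central error: $e_c = \alpha\beta^{pc-1}\delta^{n-1-c(pn-1)}$ can \emph{never} support a $d_{2p-1}$, because $d_{2p-1}$ vanishes on $\alpha$ and $\beta$ and the Leibniz rule then makes $d_{2p-1}(e_c)$ a multiple of $\alpha^2=0$. Thus your ``generic case'' (handled by $e_c$ supporting a $d_{2p-1}$ when its $\delta$-exponent is $\not\equiv 0$) is empty. The actual mechanism for $c\not\equiv 1\bmod p$ is that $e_c$ is the \emph{target} of $d_{2p-1}(\beta^a\delta^b)\dequal b\,\alpha\beta^{a+p-1}\delta^{b-n^2}$ with $b=n^2+n-1-c(pn-1)\equiv c-1\bmod p$, which is nonzero exactly when $c\not\equiv 1\bmod p$. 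Note that the governing residue is $c-1$ (the source's $\delta$-exponent), not $c-2$ (the $\delta$-exponent of $e_c$ itself, which is what your trichotomy keys on), and that the comparison hypothesis must be checked at the \emph{source}, which sits at $(s,t)=(2p(c-1),\,2p(c-1))$---so the relevant inequality is $2p-1\le 2p(c-1)-1$, i.e.\ $c\ge 2$, not ``$2p-1\le 2pc-2$ for all $c\ge1$'' as you wrote.

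In the remaining case $c\equiv 1\bmod p$ with $c\neq1$, the class $e_c$ survives $d_{2p-1}$ and instead \emph{supports} a $d_{2n^2+1}$ (note $2n^2+1\neq 2p$; the paper's ``$E_{2p}=E_{2n^2+1}$'' identifies pages, not indices). Since such $c$ satisfies $c\ge p+1$, even the basic bound $2n^2+1\le t-1=2pc-2$ holds, so this differential imports and kills $e_c$; your appeal to \Cref{lem:diffimport} also works here but is not needed. Finally, your parenthetical that $e_1$ is an additive permanent cycle is wrong: $e_1=\alpha\beta^{p-1}\delta^{-n^2}$ supports an additive $d_{2n^2+1}$ whose length exceeds the improved comparison range $(p-1)(t-1)=2n^2$ by exactly one---this is precisely why \Cref{topunivformula} and the power-operation correction are needed later, and why the lemma is stated only for $c\neq1$.
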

\begin{proof}
In the \emph{additive} spectral sequence,  by \Cref{lem:differentials}, the classes $e_c$ are never the source of a $d_{2p-1}$ differential, but can be the target. In particular, there are additive differentials
\[d_{2p-1}(\beta^a \delta^b) \dequal  b \alpha \beta^{a+p-1} \delta^{b-n^2}.  \]
For this image to be (a multiple of) $e_c$, we must have $b = n^2+n-1 - c(pn-1) \equiv c -1 \mod p$. 
\begin{itemize}
\item If $c\not \equiv 1 \mod p$, then $e_c$ is the target of $d_{2p-1}$ on an element with $t=2p(c-1)$. Since such $c$ is $\geq 2$, we have that  $t\geq 2p>2p-1$, so that~\Cref{lem:diffimport} implies that this differential can be imported in the Picard spectral sequence. Hence $e_c$ itself is killed. 
\item If $c\equiv 1 \mod p$, this differential is zero (as then $b\equiv 0 \mod p$), so $e_c$ survives the $d_{2p-1}$. 
\end{itemize}

The next and only possible additive differential is a $d_{2n^2+1}$. We have 
\[d_{2n^2+1}(\alpha \delta^{n^3}) \dequal \beta^{n^2+1}, \]
which determines the rest. Namely, we have, for all $a,b\in \Z$, $a\geq 0$,
\[d_{2n^2+1}(\alpha\beta^a \delta^{bp-1}) \dequal \beta^{a+n^2+1} \delta^{pb-n^3-1}. \]
When $c\equiv 1 \mod (p)$, we have that $n-1-c(pn-1) \equiv -1 \mod p$, hence
\[ d_{2n^2+1}(e_c) \dequal \beta^{pc+n^2} \delta^{c(1-pn) + n-n^3-1}. \]
To import this differential to the Picard spectral sequence, we need to have that 
\[ 2n^2+1 \leq t(e_c) = 2pc-2. \]
This holds whenever 
\[ p-2 +\frac{5}{2p} \leq c \quad \Leftrightarrow \quad c\geq p-1. \]
Since $c \equiv 1 \mod (p)$, we see we can import all differentials except that involving $e_1$, and the result follows.
\end{proof}
\begin{rem}
	This lemma says nothing about the fate of $e_1$; it does not survive in the additive spectral sequence, but we shall see later that it must survive in the multiplicative case. 
\end{rem}
We can now prove our main result. 
\begin{proof}[Proof of Theorem 4.1]
We have seen that in the range $t \ge 2$ of the homotopy fixed point spectral sequence for $\pi_{t-s}\pics(E^{hC_p})$, we only have a single possible contribution to the $0$-stem, namely a group of order at most $p^n$, arising from the class $e_1$. This class has bidegree $(2p-1,2p-1)$, and it follows from \Cref{topunivformula} that there is a differential
\begin{equation}\label{eq:diff}
d_{2n^2+1,\times}(e_1) \dequal d_{2n^2+1}(e_1) + \zeta \beta \cal{P}^{n/2}(e_1),
\end{equation}
 for $\zeta \in \F^\times_p$. Let $f_t = \beta^{n^2+n+1}\delta^{-n^2(n+1)}$ so that we have
 (in the additive spectral sequence)
 \[
d_{2n^2+1}(e_1) \dequal f_t. 
 \]

 While it is possible to work out exactly what $\beta \cal{P}^{n/2}(e_1)$ is,
 we will not need to be so precise. Instead we claim that the group
 $\mathbb{F}_{p^n}e_1$ on the $E_2$-page  can contribute at most an element of order $p$ to Picard group. 
 
Choose $\xi' \in \mathbb{F}_{p^n}$ such that $\beta \mathcal{P}^{n/2}(e_1) =
\xi' f_t$. 
  Then we have 
 \[
d_{2n^2+1}(e_1) = \xi f_t \quad \text{and} \quad \beta\cal{P}^{n/2}(e_1) = \xi'f_t,
 \]
 for $\xi,\xi' \in \F_{p^n}$ and $\xi \neq 0$. It follows from \eqref{eq:diff} that, for any $a \in \F_{p^n}$, we have
 \[
\begin{split}
	d_{2n^2+1,\times}(ae_1) &= d_{2n^2+1}(ae_1) + \zeta \beta \cal{P}^{n/2}(a e_1) \\
	& = a\xi f_t + a^p \zeta \beta \cal{P}^{n/2}(e_1) \\
	&= a \xi f_t + a^p \zeta \xi' f_t \\
	&=(a\xi + a^{p}\zeta \xi') f_t. 
\end{split}
 \]
Since $\xi \ne 0$, there are at most $p$ choices of $a$ that make the above vanish, and so we do indeed have an upper bound of $\Z/p$ for the contribution of $e_1$ in $\pic(E^{hC_p})$. 

Finally, we know from~\Cref{prop:algebraicterm} that $H^1(C_p,\pi_1\pics(E))
\simeq \Z/p$, while as usual it follows from~\cite{baker_richter_invertible} that the invariants $H^0(C_p,\pi_0\pics(E))$ are $\Z/2$. Thus we have the following in the 0-stem of the Picard spectral sequence:

\begin{itemize}
  \item a group of order at most $2$ in $t = s = 0$, 
  \item a group of order at most $p$ in $t = s =1 $, and
  \item a group of order at most $p$ in $t = s =2p-1$.
\end{itemize}
This gives an upper bound of $2p^2$ for the order of the Picard group, but as noted previously this is also a lower bound, and the result follows. 
A picture of the Picard spectral sequence for $n=2,p=3$ and $G = C_3$ is shown in~\Cref{fig:eo2pic}.
\begin{figure}[!thbp]
\centering
\includegraphics{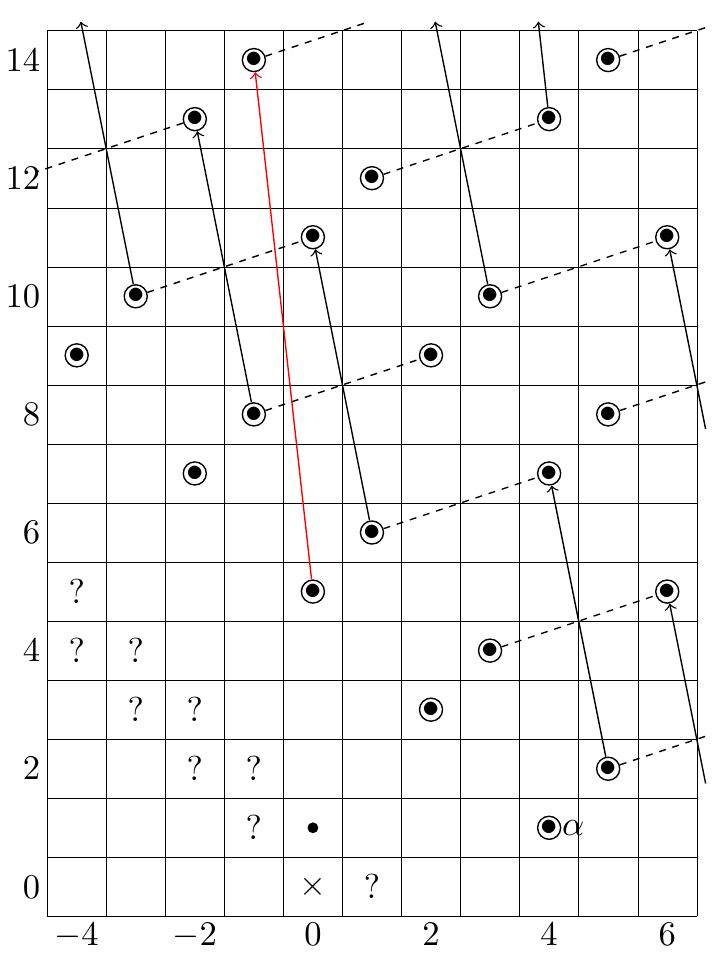}
\caption{The HFPSS for $\pi_*\pics E_{p-1}^{hC_p}$ in the case $n=2,p=3$. The differential shown in red has kernel $\Z/3$.}\vspace{-4mm} 
\caption*{  ($\bullet$ denotes $\Z/3$, $\cbull$ denotes $\F_9$ and $\times$ denotes $\Z/2$) }\label{fig:eo2pic}
\end{figure}

\end{proof}

\subsection{The maximal finite subgroup}\label{sec:maxfinite}
In this short section we sketch a direct proof in the case that $G$ is a maximal finite subgroup of $\G_n$ with $p$-torsion. We include this to show that after taking Galois invariants the calculation is slightly easier; in particular, we do not need to use \Cref{topunivformula}. The calculation of $\pic(E_n^{hG})$ in this case was previously known to Hopkins. 

We start by recalling that, modulo the image of the transfer, we have
\begin{equation}\label{eq:maxfinite}
H^*(G,E_*) \simeq \F_{p}[\alpha,\beta,\Delta^{\pm 1}]/(\alpha^2)
\end{equation}
where $|\Delta| = (0,2pn^2)$, and that the homotopy groups $\pi_*(E^{hG})$ are $2p^2n^2$ periodic; this sets $\Z/(2p^2n^2)$ as a lower bound on the order of the Picard group. As usual, we will use the Picard spectral sequence to determine an upper bound.

In the range $t \ge 2$ we have classes
\[
e_c = \alpha \beta^{pc-1}\Delta^{(c(1-pn)+n-1)/n^2}, 
\]
in the 0-stem, indexed by integers $c \ge 1$, which have cohomological degree $s(e_c) = 2pc-1$. Once again, one can check that the classes $e_c$ for $c \ne 1$ do not survive the Picard spectral sequence. One key difference to the case of $C_p$ is that we see immediately that $e_1$ can contribute a group of at most order $p$ to $\pic(E^{hG})$, thus avoiding the use of~\Cref{topunivformula}. 

We are then left with calculating the contributions from $t=0$ and $t=1$. It follows
from~\cite{baker_richter_invertible} that $H^0(G,\pi_0\pics (E)) \simeq H^0(G,\Z/2) \simeq \Z/2$, and so we are left to determine $H^1(G,\pi_1\pics(E)) \simeq H^1(G,E_0^\times)$. This can be done in at least a couple of different ways; for example, as in~\Cref{prop:algebraicterm}, or using an exponential map and several Bockstein spectral sequences as in \cite{karamanov_pic}. In any case, the result is a group of order $pn^2$.

Putting all of this together, we obtain an upper bound $2 \cdot pn^2\cdot p$ for the abutment, i.e. for the Picard group of $E^{hG}$. However, by~\Cref{rem:periodicity} we know the the Picard group contains a cyclic group of order $2p^2n^2$, and so it follows that $\pic(E^{hG})$ is indeed cyclic of order $2p^2n^2$. We illustrate this for $n=2,p=3$ in \Cref{fig:eo2picmax}. 
\begin{figure}[!htbp]
\centering
\includegraphics{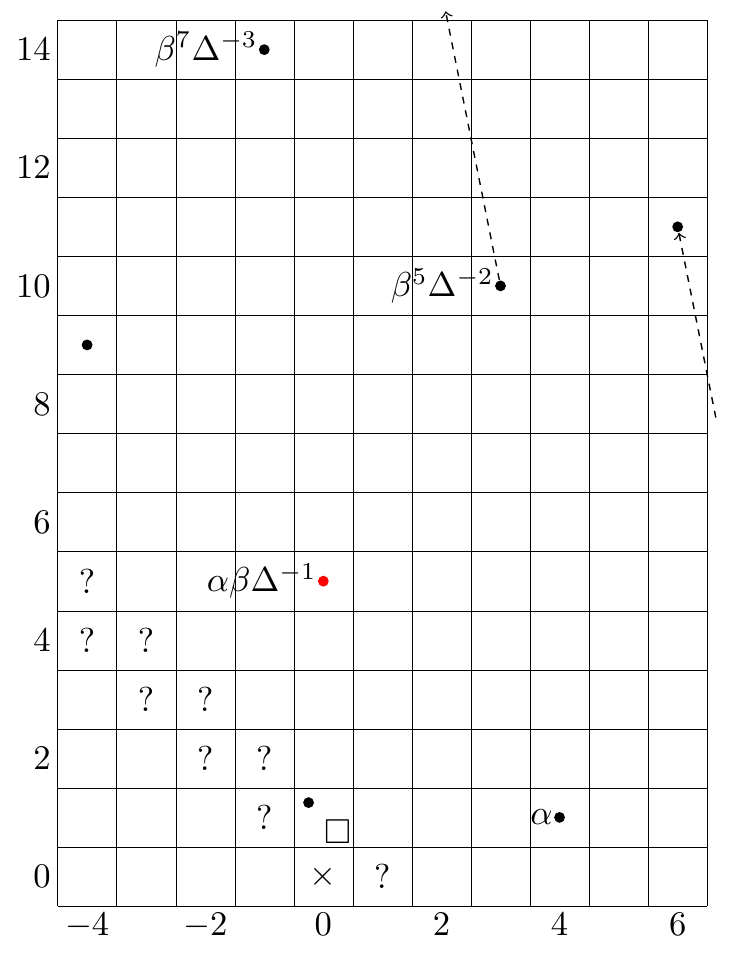}
\caption{The HFPSS for $\pi_*\pics E^{hG}$ in the case $n=2,p=3$ for $G \subset \G$ a maximal finite subgroup whose order is divisible by $p$.}\vspace{-4mm} 
\caption*{  ($\square$ denotes $\Z/4$, $\bullet$ denotes $\Z/3$, and $\times$ denotes $\Z/2$) }\label{fig:eo2picmax}
\end{figure}

\part{Computational tools}

\section{Truncated logarithms and differentials in the algebraic setting}
\label{algTruncLogsec}
\subsection{Overview}
In this section, we describe a tool for working with the following general
situation. 
Let $R$ be a complete noetherian local ring with maximal ideal $\mathfrak{m} \subset R$ such that
the residue field $R/\mathfrak{m}$ has characteristic $p$. Suppose $G$ is a finite group acting
on $R$. 
Our goal is to calculate the cohomology groups
$ H^i(G, R^{\times}).  $
We assume that the additive analog, the cohomology groups $H^i(G, R)$, are
understood by other means. 

One approach towards $H^i(G, R^{\times})$ is to consider the
$\mathfrak{m}$-adic filtration on $R$. It induces a $G$-stable filtration on $R^{\times}$,
\begin{equation} \label{multfilt} R^{\times} \supset 1 + \mathfrak{m} \supset 1 + \mathfrak{m}^2 \supset
\dots,  \end{equation}
which leads to a spectral sequence
\begin{equation} \label{multss} E_1^{i,j} =  H^i( G, (1 + \mathfrak{m}^j)/(1 + \mathfrak{m}^{j+1})) \implies H^i( G,
R^{\times}) . \end{equation}
Here the differentials run $d_k \colon E_{k}^{i,j} \to E_k^{i+1, j+k}$.
For $j > 0$, there is a $G$-equivariant isomorphism (sending $1 + x
\mapsto x$)
\[  (1 + \mathfrak{m}^j)/(1 + \mathfrak{m}^{j+1}) \simeq
\mathfrak{m}^j/\mathfrak{m}^{j+1} \]
between the associated graded
of the filtration \eqref{multfilt} and the $\mathfrak{m}$-adic filtration on
$R$ itself. 
The latter, of course, leads to a spectral sequence
(indexed similarly)
\begin{equation} 
\label{addss}
E_{1}^{i,j} = H^i(G, \mathfrak{m}^j/\mathfrak{m}^{j+1}) \implies H^i ( G, R).
\end{equation} 
In practice, one may (and is likely to) understand the additive spectral sequence \eqref{addss} concretely in specific
instances, for instance, because of the additional tools provided by
multiplicativity.
The additive and the multiplicative spectral sequences have almost the same $E_1$
pages, and in this section we will give a tool for comparing differentials in a
range of dimensions. 

\subsection{The $p$-truncated exponential}
Let $R$ be a commutative ring and let $I \subset R$
be an ideal such that $I^2 = 0$. 
Then there is an isomorphism of groups
\[ I \simeq 1 + I \subset R^{\times}, \quad x \mapsto 1 + x.  \]
We now review a crucial and elementary piece of algebra that will enable
generalizations of this fact when certain primes are inverted in $R$. 

\begin{definition} 
Let $R$ be a $\mathbb{Z}[1/(p-1)!]$-algebra. We define a function
\begin{equation} \label{def:truncexp} \exp_p \colon R \to R, \quad \exp_p(x)  =1 + x + \frac{x^2}{2!} + \dots +
\frac{x^{p-1}}{(p-1)!} ,\end{equation}
called the \emph{$p$-truncated exponential,}
and a function 
\begin{equation} 
\label{def:trunclog}
\log_p \colon R \to R, \quad \log_p(x) = \sum_{n= 1}^{p-1} (-1)^{n-1}\frac{(x-1)^n}{n},
\end{equation} 
called the \emph{$p$-truncated logarithm.}
\end{definition} 

The significance arises from the following well-known application.
\begin{lemma} 
If $I \subset R$ is an ideal with $I^p = 0$ and $R$ is a
$\mathbb{Z}[1/(p-1)!]$-algebra, then 
the map
\[ \exp_p \colon I \to 1 + I \subset R^{\times}  \]
is a group isomorphism (whose inverse is given by the truncated logarithm). 
\end{lemma}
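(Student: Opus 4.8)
The plan is to verify directly that $\exp_p$ and $\log_p$ restrict to mutually inverse maps on $I$ and $1+I$ respectively, and that each is a homomorphism. Since $I^p=0$, the defining truncated series for $\exp_p$ and $\log_p$ agree (after restriction to $I$, resp.\ $1+I$) with the honest power series $\exp$ and $\log$, because every omitted term lies in $I^p=0$. So the content is the classical identities $\exp(\log(1+x))=1+x$ and $\log(\exp(x))=x$ and the functional equation $\exp(x+y)=\exp(x)\exp(y)$, but worked out purely formally in the nilpotent setting where no convergence issues arise; the hypothesis that $R$ is a $\mathbb{Z}[1/(p-1)!]$-algebra is exactly what guarantees the coefficients $1/n!$ and $1/n$ for $1\le n\le p-1$ make sense in $R$.

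First I would observe that for $x\in I$ one has $\exp_p(x)\in 1+I$ (each higher term is in $I^2\subset I$), and for $1+x$ with $x\in I$ one has $\log_p(1+x)=\sum_{n=1}^{p-1}(-1)^{n-1}x^n/n\in I$. Next, the key algebraic input: consider the polynomial identity in $\mathbb{Q}[t]$ (or $\mathbb{Z}[1/(p-1)!][t]$) asserting that $\log_p(\exp_p(t))\equiv t \pmod{t^p}$ and $\exp_p(\log_p(1+t))\equiv 1+t\pmod{t^p}$; these hold because $\exp$ and $\log$ are inverse power series and truncating at degree $p-1$ is compatible with composition modulo $t^p$. Substituting $x\in I$ for $t$ and using $I^p=0$ then gives the inversion on the nose. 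For the homomorphism property, use the analogous identity $\exp_p(s+t)\equiv\exp_p(s)\exp_p(t)\pmod{(s,t)^p}$ in $\mathbb{Z}[1/(p-1)!][s,t]$, which follows from the classical $\exp(s+t)=\exp(s)\exp(t)$ together with the fact that the product of the degree-$\le p-1$ truncations agrees with the full product modulo terms of total degree $\ge p$; substituting $x,y\in I$ and invoking $I^{p}=0$ (so that any monomial $x^ay^b$ with $a+b\ge p$ vanishes) yields $\exp_p(x+y)=\exp_p(x)\exp_p(y)$. This shows $\exp_p\colon I\to 1+I$ is a group homomorphism, and bijectivity follows from the inversion identities above.

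The step I expect to be the genuine (if mild) obstacle is making the "truncation is compatible with composition/multiplication modulo $t^p$" argument airtight: one must be slightly careful that the extra terms discarded when passing from $\exp$ to $\exp_p$, or when multiplying two truncated polynomials, all have total degree $\ge p$ and hence die under the substitution because $I^p=0$. For the functional equation this is immediate since $\exp_p(s)\exp_p(t)$ and $\exp_p(s+t)$ are both polynomials whose total-degree-$<p$ parts must coincide (they agree with the corresponding parts of $\exp(s)\exp(t)=\exp(s+t)$, which do match). For the composition $\log_p\circ\exp_p$ one similarly notes that substituting $\exp_p(t)-1$ (which has no constant term) into $\log_p$ and then truncating, versus substituting into the full $\log$, differ only in degrees $\ge p$. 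Once these formal identities over $\mathbb{Z}[1/(p-1)!]$ are in hand, the lemma for $(R,I)$ follows by functoriality of these universal polynomial identities under the ring map $\mathbb{Z}[1/(p-1)!][t]\to R$ (resp.\ in two variables) sending $t\mapsto x$, together with the vanishing $I^p=0$. No deep input is needed beyond the classical inverse-function and exponential identities for formal power series.
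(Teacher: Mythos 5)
Your proof is correct. The paper in fact gives no proof of this lemma at all --- it is stated as a ``well-known application'' of the truncated exponential and logarithm --- so there is nothing to compare against; your argument (reduce to universal polynomial congruences $\log_p(\exp_p(t))\equiv t$, $\exp_p(\log_p(1+t))\equiv 1+t \pmod{t^p}$ and $\exp_p(s+t)\equiv \exp_p(s)\exp_p(t)\pmod{(s,t)^p}$ over $\mathbb{Z}[1/(p-1)!]$, verified inside $\mathbb{Q}[t]$ resp.\ $\mathbb{Q}[s,t]$ by comparison with the full series, then specialize using $I^p=0$) is exactly the standard one and is airtight. The only phrasing to be careful with is the opening claim that $\exp_p$ ``agrees with the honest power series $\exp$'' over $R$ --- the full series does not make sense over $R$ since $1/p!$ need not exist there --- but your actual argument avoids this by performing the comparison only at the level of universal identities over $\mathbb{Z}[1/(p-1)!]\subset\mathbb{Q}$, where both truncated sides have coefficients in $\mathbb{Z}[1/(p-1)!]$ and the congruence can be checked rationally.
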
 

We note that the inverse isomorphisms $\exp_p, \log_p$ between $I$ and $1 + I$
are compatible with the $I$-adic filtration, and with the isomorphisms on associated
graded modules $ I^k/I^{k+1} \simeq (1 + I^k)/(1 + I^{k+1}) $ given by $ 1 + x \mapsto
x$.

\begin{example} 
Suppose $(A, \mathfrak{m})$ is a local ring whose residue field is of
characteristic $p$, a prime larger than $p$, or zero. Then for each $k>0$, we have a functorial isomorphism 
of groups
\[  (1  + \mathfrak{m}^k)/(1 + \mathfrak{m}^{pk}) \simeq
\mathfrak{m}^k/\mathfrak{m}^{pk},  \]
given by the truncated logarithm. 
This leads to an identification of the spectral sequences \eqref{addss} and
\eqref{multss} in a ``stable'' range of dimensions (which we will make more
precise below). 
\end{example}

We will actually need a slight generalization of this fact (which measures
the first-order failure of $\exp_p$ to be a homomorphism in general).

\begin{definition} 
Given $x_1, \dots, x_k \in R$, we define
\begin{equation} \label{def:sigma} \sigma_p(x_1, \dots, x_k)
\stackrel{\mathrm{def}}{=} \frac{( \sum_i
x_i )^p - \sum_i x_i^p}{p!} =  
\sum_{\substack{j_1, \dots j_k \in [0, p-1] \\
j_1 + \dots + j_k = p
}}^{p-1} 
\frac{x_1^{j_1} x_2^{j_2} \dots x_k^{j_k}}{j_1!\dots j_k!}
 , \end{equation}
 so that we obtain a function $\sigma_p( \cdot, \dots, \cdot)\colon R^k \to R$. 
 If $M$ is a $\mathbb{Z}[1/(p-1)!]$-module, we will also use $\sigma_p$ to
 denote the induced function
 \[ \sigma_p \colon M^k \to \sym^p M,  \]
 defined by the same formula. 
\end{definition}

\begin{prop} 
Let $R$ be a $\mathbb{Z}[1/(p-1)!]$-algebra. Suppose $x_1, \dots, x_k \in R$ and let $I = (x_1, \dots, x_k) \subset R$. Then 
\begin{equation} \label{expsumformula} \prod \exp_p(x_i) - \exp_p(\sum x_i) \equiv \sigma_p(x_1,
\dots, x_k) \mod I^{p+1}. \end{equation} 
\end{prop}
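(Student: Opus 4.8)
\textbf{Plan for proving \eqref{expsumformula}.} The statement is a congruence of elements of $R$ modulo $I^{p+1}$, where $I = (x_1, \dots, x_k)$, so the plan is to expand both sides explicitly as polynomials in the $x_i$ and compare them degree by degree, discarding all terms of total degree $\geq p+1$ (which lie in $I^{p+1}$). First I would write $\exp_p(x_i) = \sum_{a=0}^{p-1} x_i^a/a!$ and multiply these out: $\prod_i \exp_p(x_i) = \sum_{(a_1,\dots,a_k)} \frac{x_1^{a_1}\cdots x_k^{a_k}}{a_1!\cdots a_k!}$, where each $a_i$ ranges in $[0,p-1]$. Grouping by the total degree $d = a_1 + \dots + a_k$ and truncating modulo $I^{p+1}$, only $d \leq p$ contributes. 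For $d \leq p-1$ there is no truncation constraint coming from the individual caps $a_i \leq p-1$ (since each $a_i \leq d \leq p-1$ automatically), so the degree-$d$ part is exactly $\frac{1}{d!}(\sum_i x_i)^d$ by the multinomial theorem — which is precisely the degree-$d$ part of $\exp_p(\sum_i x_i)$.

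\textbf{The degree-$p$ part.} The only discrepancy is in total degree $p$. On the left, the degree-$p$ part of $\prod_i \exp_p(x_i)$ is $\sum_{a_1+\dots+a_k=p,\ 0\leq a_i\leq p-1} \frac{x_1^{a_1}\cdots x_k^{a_k}}{a_1!\cdots a_k!}$; this sum omits the $k$ monomials $x_i^p$ (for which some $a_i=p$, forbidden). The degree-$p$ part of $\exp_p(\sum_i x_i)$ is $0$, since $\exp_p$ is truncated at degree $p-1$. Hence, modulo $I^{p+1}$,
\[
\prod_i \exp_p(x_i) - \exp_p\Bigl(\textstyle\sum_i x_i\Bigr) \equiv \sum_{\substack{a_1+\dots+a_k=p\\ 0\leq a_i\leq p-1}} \frac{x_1^{a_1}\cdots x_k^{a_k}}{a_1!\cdots a_k!},
\]
which is exactly the middle expression in \eqref{def:sigma} defining $\sigma_p(x_1,\dots,x_k)$. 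Finally I would reconcile this with the closed form $\frac{(\sum_i x_i)^p - \sum_i x_i^p}{p!}$: by the multinomial theorem $(\sum_i x_i)^p = \sum_{a_1+\dots+a_k=p} \frac{p!}{a_1!\cdots a_k!} x_1^{a_1}\cdots x_k^{a_k}$, and the terms with some $a_i = p$ are precisely the $\sum_i x_i^p$, so subtracting and dividing by $p!$ gives the stated sum over $0\leq a_i\leq p-1$. (One should note that $p!$ is invertible in $R$ here only because we are working in the quotient where the expression $\sigma_p$ is defined via the explicit sum with denominators $a_1!\cdots a_k! \mid (p-1)!$, which are units in a $\Z[1/(p-1)!]$-algebra; the form $(\cdots)/p!$ is understood as shorthand for that integral combination, exactly as in \eqref{def:sigma}.)

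\textbf{Expected main obstacle.} There is no real obstacle — the argument is a bookkeeping exercise in the multinomial theorem. The one point requiring a little care is the bookkeeping of \emph{which} multi-indices are excluded by the truncation $a_i \leq p-1$: in total degree $\leq p-1$ none are excluded (so the two sides agree), and in total degree exactly $p$ the excluded ones are precisely those with a single $a_i = p$ and the rest zero, accounting for the $-\sum_i x_i^p$ correction. Making sure that the denominators appearing are all of the form $a_1!\cdots a_k!$ with each $a_i \leq p-1$ — hence dividing $(p-1)!$ and so invertible in $R$ — is what makes every term meaningful over a $\Z[1/(p-1)!]$-algebra, and this should be remarked on explicitly.
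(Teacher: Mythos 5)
Your proof is correct and is exactly the ``straightforward computation'' that the paper's proof leaves to the reader: a degree-by-degree multinomial expansion, with the only discrepancy in total degree $p$ accounting precisely for $\sigma_p$. The remark about the denominators $a_1!\cdots a_k!$ dividing $(p-1)!$ is a worthwhile point of care, but there is nothing here that diverges from the intended argument.
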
 
\begin{proof} 
This follows by a straightforward computation. 
\end{proof} 
With notation as above, observe that $\sigma_p(x_1,\dots, x_k) \in I^p$ and
that $\exp_p(x) \in 1 + I$ for $x \in I$. As a result, we also find that, if $I$ is
contained in the Jacobson radical of $R$, then we have the formula
\begin{equation} \label{quotexpformula1}\frac{\exp_p(\sum x_i)}{\prod \exp_p(x_i)} \equiv 1 - \sigma_p(x_1, \dots,
x_k) \mod I^{p+1}.  \end{equation}
For example, taking $k = 2$ and taking $x_1 = y, x_2 = -y$, we find
\begin{equation} 
\label{invequation}
\exp_p( y) \exp_p(-y) \equiv 1 + \sigma_p( y, -y) \mod I^{p+1}.
\end{equation} 
Note in addition that $\sigma_p(y, -y) = y^p \sigma_p( 1, -1)$. 

\begin{cor}
If $y_0, \dots, y_n \in I$ and $I$ is contained in the radical of $R$,
then
\begin{equation} 
\label{alternatingsumthm}
\frac{\exp_p( \sum_{i = 0}^n (-1)^i y_i)}{\prod_{i=0}^n \exp_p(y_i)^{(-1)^i}} \equiv 
1 - \sigma_p( y_0, -y_1, \dots , (-1)^n y_n) + \sum_{i \ \mathrm{odd}}
\sigma_p(y_i, -y_i)  \quad \mod I^{p+1}.
\end{equation} 
\end{cor}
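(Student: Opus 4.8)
The plan is to prove \eqref{alternatingsumthm} by an inductive telescoping argument, reducing everything to repeated applications of the two-term formula \eqref{quotexpformula1} and the involution formula \eqref{invequation}. First I would observe that it suffices to compute the ratio $\exp_p(\sum_i (-1)^i y_i) / \prod_i \exp_p((-1)^i y_i)$ modulo $I^{p+1}$, since each factor $\exp_p(y_i)^{(-1)^i}$ differs from $\exp_p((-1)^i y_i)$ only by the correction recorded in \eqref{invequation}: for $i$ odd, $\exp_p(y_i)^{-1} \equiv \exp_p(-y_i)(1 + \sigma_p(y_i,-y_i))^{-1} \equiv \exp_p(-y_i)(1 - \sigma_p(y_i,-y_i)) \bmod I^{p+1}$, using that $\sigma_p(y_i,-y_i) \in I^p$ so its square vanishes mod $I^{p+1}$ and higher corrections are absorbed. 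Collecting these corrections over the odd indices produces exactly the summand $\sum_{i\ \mathrm{odd}} \sigma_p(y_i,-y_i)$, again because products of two elements of $I^p$ lie in $I^{2p} \subseteq I^{p+1}$, so the corrections simply add.

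Next I would handle the ``clean'' ratio $\exp_p(\sum_i z_i)/\prod_i \exp_p(z_i)$ with $z_i = (-1)^i y_i$. Setting $k = n+1$ and applying \eqref{quotexpformula1} directly gives this ratio as $1 - \sigma_p(z_0, \dots, z_n) = 1 - \sigma_p(y_0, -y_1, \dots, (-1)^n y_n) \bmod I^{p+1}$, which is precisely the remaining term in \eqref{alternatingsumthm}. Finally I would multiply the two contributions together: the product of $\big(1 - \sigma_p(y_0,-y_1,\dots,(-1)^n y_n)\big)$ with $\big(1 + \sum_{i\ \mathrm{odd}}\sigma_p(y_i,-y_i)\big)$ is, modulo $I^{p+1}$, just $1 - \sigma_p(y_0,-y_1,\dots,(-1)^n y_n) + \sum_{i\ \mathrm{odd}}\sigma_p(y_i,-y_i)$, since the cross term is a product of two elements of $I^p$ and hence lies in $I^{2p} \subseteq I^{p+1}$ (here $p \ge 2$).

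The only genuinely delicate point is bookkeeping the congruences: one must be careful that $\exp_p(z) \in 1 + I$ is a \emph{unit} (which requires $I$ to lie in the radical, as hypothesized), so that inverses make sense, and that all error terms genuinely live in $I^{p+1}$ rather than merely $I^p$. Since $\sigma_p$ is $I^p$-valued and $I^p \cdot I^p \subseteq I^{2p} \subseteq I^{p+1}$, every product of two correction terms is negligible, which is what makes the ``additive'' collection of corrections legitimate. I would therefore organize the proof as: (1) reduce $\exp_p(y_i)^{(-1)^i}$ to $\exp_p((-1)^i y_i)$ up to the involution correction via \eqref{invequation}; (2) apply \eqref{quotexpformula1} to the reduced expression; (3) multiply, discarding $I^{2p}$ terms. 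No step presents a real obstacle; the main care is simply in tracking signs in the $\sigma_p$ argument list and confirming the error bookkeeping, which is routine given the formulas already established.
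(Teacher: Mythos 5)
Your proposal is correct and follows essentially the same route as the paper: apply \eqref{quotexpformula1} with $x_i = (-1)^i y_i$ to get the term $1 - \sigma_p(y_0, -y_1, \dots, (-1)^n y_n)$, then use \eqref{invequation} to convert each $\exp_p(y_i)^{-1}$ (for $i$ odd) into $\exp_p(-y_i)$ up to a correction in $I^p$, and discard all cross terms since they lie in $I^{2p} \subseteq I^{p+1}$. The bookkeeping of signs and error terms is exactly what the paper's two-line proof leaves implicit.
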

\begin{proof} 
This follows from \eqref{quotexpformula1} taking $x_i = (-1)^i y_i$. 
We multiply it by 
$\prod_{i \ \mathrm{odd}}\left( \exp_p( -y_i) / \exp_p(y_i)^{-1}\right)$ 
and invoke \eqref{invequation}. 
\end{proof} 

We give a name to the discrepancy term; given $y_0, \dots, y_n$, we set
\begin{equation} \mu_p( y_0, \dots, y_n) \stackrel{\mathrm{def}}{=}  
\sigma_p( y_0, -y_1, \dots , (-1)^n y_n) - \sum_{i \ \mathrm{odd}}
\sigma_p(y_i, -y_i).
\end{equation}

\subsection{The stable range and first differential}

We begin by describing a version of the general setup of comparison of spectral
sequences from the introduction to this section.

\begin{prop} 
\label{comparediffs}
Let $G$ be a finite group acting on a local noetherian ring $(R,
\mathfrak{n})$ with $(p-1)!$ invertible. 
Consider the two \emph{truncated} spectral sequences
\begin{align*}
 E_1^{s,t} &= H^s(G, \mathfrak{n}^t/\mathfrak{n}^{t+1}) \implies H^i(G,
\mathfrak{n}^k/\mathfrak{n}^{pk}), \\
E_{1 }^{s,t} &= H^i(G, \mathfrak{n}^t/\mathfrak{n}^{t+1}) \implies H^i(G,
(1 + \mathfrak{n}^k)/(1  +\mathfrak{n}^{pk}))
,
\end{align*}
where $t \in [k, pk-1]$. Then the natural identification of $E_1$-terms extends
to an isomorphism of spectral sequences given by the $p$-truncated exponential. 
\end{prop}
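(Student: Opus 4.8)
The plan is to show that the $p$-truncated exponential $\exp_p$ induces, for each $j \in [k, pk-1]$, a $G$-equivariant isomorphism
\[
(1 + \mathfrak{n}^j)/(1 + \mathfrak{n}^{pk}) \xrightarrow{\ \sim\ } \text{(something additive)},
\]
compatible with the two filtrations, and then deduce that the entire filtered objects are identified in such a way that the filtrations, the associated graded pieces, and hence the spectral sequences all match up. More precisely: $\exp_p$ is a homomorphism on any ideal $I$ with $I^p = 0$ (the Lemma quoted just above in the excerpt), and we will apply this with $I = \mathfrak{n}^k$ working modulo $\mathfrak{n}^{pk}$, since $(\mathfrak{n}^k)^p = \mathfrak{n}^{pk}$. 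So $\exp_p$ gives a genuine group isomorphism $\mathfrak{n}^k/\mathfrak{n}^{pk} \simeq (1+\mathfrak{n}^k)/(1+\mathfrak{n}^{pk})$, and it is $G$-equivariant because the defining formula \eqref{def:truncexp} is natural in the ring with $G$-action. This isomorphism carries the submodule $\mathfrak{n}^t/\mathfrak{n}^{pk}$ onto the subgroup $(1+\mathfrak{n}^t)/(1+\mathfrak{n}^{pk})$ for every $t\in[k,pk-1]$, because $\exp_p(x) \in 1 + \mathfrak{n}^t$ whenever $x \in \mathfrak{n}^t$ and conversely $\log_p$ sends $1+\mathfrak{n}^t$ back into $\mathfrak{n}^t$; this is exactly the filtration-compatibility remark made after the Lemma in the text.

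From there I would argue that a $G$-equivariant isomorphism of filtered abelian groups which respects the filtration steps and induces the ``identity'' on associated graded pieces (via the standard $1 + x \mapsto x$ identification) gives an isomorphism of the resulting spectral sequences. Concretely: take the filtered cochain complexes $C^\bullet(G, \mathfrak{n}^t/\mathfrak{n}^{pk})$ and $C^\bullet(G,(1+\mathfrak{n}^t)/(1+\mathfrak{n}^{pk}))$ indexed by $t$; applying $C^\bullet(G,-)$ to the filtered isomorphism $\exp_p$ gives an isomorphism of filtered complexes, hence of the associated spectral sequences, including all differentials $d_k$. On the $E_1$-page this isomorphism is precisely the natural identification coming from $(1+\mathfrak{n}^t)/(1+\mathfrak{n}^{t+1}) \simeq \mathfrak{n}^t/\mathfrak{n}^{t+1}$, $1+x\mapsto x$, because $\exp_p(x) = 1 + x \bmod \mathfrak{n}^{2t} \subseteq \mathfrak{n}^{t+1}$ (as $2t \ge 2k > k \ge $ — wait, we need $2t \ge t+1$, i.e.\ $t \ge 1$, which holds since $k \ge 1$). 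So the $E_1$-identification of the statement is the one induced by $\exp_p$, and the isomorphism of spectral sequences extends it, as claimed.

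The main point requiring care — the only genuine obstacle — is that $\exp_p$ is \emph{not} in general a group homomorphism on an arbitrary ideal; it is one precisely when the relevant power of the ideal vanishes. The whole construction is therefore forced to work modulo $\mathfrak{n}^{pk}$ rather than in $R^\times$ itself, which is exactly why the two spectral sequences in the statement are the \emph{truncated} ones abutting to $H^*(G,\mathfrak{n}^k/\mathfrak{n}^{pk})$ and $H^*(G,(1+\mathfrak{n}^k)/(1+\mathfrak{n}^{pk}))$, and not the full \eqref{addss} and \eqref{multss}. Checking that $\exp_p$ restricted to $I = \mathfrak{n}^k$, read modulo $I^p = \mathfrak{n}^{pk}$, is a homomorphism is immediate from the quoted Lemma; the remaining verifications (equivariance, filtration-preservation, effect on associated graded) are all routine naturality checks on the explicit polynomial formulas, and the passage ``filtered isomorphism of complexes $\Rightarrow$ isomorphism of spectral sequences'' is standard homological algebra. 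The comparison of differentials in an \emph{unstable} range, and the first differential outside it, is then handled separately using the discrepancy terms $\sigma_p$ and $\mu_p$ introduced above — but that is the content of the subsequent results, not of this proposition.
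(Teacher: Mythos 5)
Your proposal is correct and follows essentially the same route the paper takes (the paper leaves this proposition without a written proof precisely because it is the combination of the lemma on $\exp_p\colon I \to 1+I$ for $I^p=0$, applied to $I=\mathfrak{n}^k$ modulo $\mathfrak{n}^{pk}$, with the remark that $\exp_p$ and $\log_p$ respect the $I$-adic filtrations and induce the standard identification $1+x \mapsto x$ on associated graded pieces). Your identification of the key points — $G$-equivariance by naturality of the polynomial formula, filtration-preservation in the range $t\in[k,pk-1]$, and the standard passage from a filtered isomorphism of cochain complexes to an isomorphism of spectral sequences — matches the intended argument exactly.
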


We now describe the setup that will enable us to ultimately give a formula for
the first unequal differential. 
For convenience, we will work with \emph{nonunital} commutative rings here. 
If $A$ is a nonunital commutative ring, we define an abelian monoid $1 + A$
consisting of formal elements $1 + x, x \in A$ with the obvious
multiplication rule. 
\begin{cons}
Suppose $\mathfrak{m}^\bullet$ is a \emph{nonunital} cosimplicial
commutative $\mathbb{Z}[1/(p-1)!]$-algebra such that $\mathfrak{m}^{\bullet,
p+1} = 0$.\footnote{The notation $\mathfrak{m}^{\bullet,k}$ means the $k$-th power of the cosimplicial ring $\mathfrak{m}^\bullet$.}
We have a descending multiplicative filtration by powers
$$\mathfrak{m}^{\bullet} \supset \mathfrak{m}^{\bullet, 2} \supset \dots .$$
Consider also the cosimplicial abelian group $1 +
\mathfrak{m}^\bullet$.\footnote{Since $\mathfrak{m}^{\bullet}$ is nilpotent,
this is a cosimplicial abelian group  and not only a cosimplicial abelian monoid.} 
We also have a filtration of cosimplicial abelian groups
\[ 1 + \mathfrak{m}^\bullet  \supset 1 + \mathfrak{m}^{\bullet,2} \supset \dots,  \]
and the successive quotients are isomorphic to those of the additive one. 

Both lead to spectral sequences starting from the associated graded
$\mathfrak{m}^{\bullet, j}/ \mathfrak{m}^{\bullet , j+1}$ (for $1 \leq j \leq
p$) 
converging to the cohomology of $\mathfrak{m}^\bullet$ (resp. $1 +
\mathfrak{m}^\bullet$), i.e., the $E_1$-pages are identified.  
We have spectral sequences
\begin{gather} E_1^{i,j} = H^i( \mathfrak{m}^{\bullet, j}/\mathfrak{m}^{\bullet , j+1})
\implies    H^i( \mathfrak{m}^{\bullet}) \\
E_1^{i,j ,\times} = H^i( \mathfrak{m}^{\bullet, j}/\mathfrak{m}^{\bullet , j+1})
\implies    H^i( 1 + \mathfrak{m}^{\bullet}).
\end{gather}
The $E_1$-terms are identified. We will denote the differentials at the $k$th page
by $d_k$ and $d_{k}^{\times}$, respectively. 

Using the truncated exponential, we obtain isomorphisms
of filtered cosimplicial abelian groups
\[ \mathfrak{m}^{\bullet}/\mathfrak{m}^{\bullet, p} \simeq (1 +
\mathfrak{m}^\bullet)/(1 + \mathfrak{m}^{\bullet, p}), \quad
\mathfrak{m}^{\bullet, 2} \simeq 1 + \mathfrak{m}^{\bullet, 2}.
\]
inducing the above isomorphism on the associated graded. 
\end{cons} 
The above construction yields the following result.
\begin{prop}[Spectral sequence comparison result, stable range] 
In the spectral sequences $E_k^{\ast, \ast}$ and $E_k^{\ast, \ast, \times}$
above, we have isomorphisms $E_k^{\ast, \ast} \simeq E_k^{\ast, \ast,
\times}$  for $ k \leq
p-1$.
This identification is
compatible with the above identification at $k = 1$
 and is compatible with the differentials $d_k$ for $k < p-1$. 
\end{prop}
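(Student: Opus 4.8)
The plan is to run the comparison through the two \emph{honest} isomorphisms of filtered cosimplicial abelian groups supplied above by the $p$-truncated exponential. Namely, $\exp_p$ restricts to an isomorphism
\[
\phi_1 \colon \mathfrak{m}^\bullet/\mathfrak{m}^{\bullet,p} \xrightarrow{\sim} (1+\mathfrak{m}^\bullet)/(1+\mathfrak{m}^{\bullet,p}),
\]
since the $p$-th power of $\mathfrak{m}^\bullet/\mathfrak{m}^{\bullet,p}$ vanishes at each cosimplicial level, and to an isomorphism $\phi_2 \colon \mathfrak{m}^{\bullet,2} \xrightarrow{\sim} 1+\mathfrak{m}^{\bullet,2}$, since $\mathfrak{m}^{\bullet,2p}=0$; both are compatible with the filtrations by powers and induce on associated gradeds the identification $1+x\mapsto x$ used to match $E_1$-pages. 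Because a filtered isomorphism of cosimplicial abelian groups induces an isomorphism of associated spectral sequences compatible with every page and every differential, $\phi_1$ identifies the spectral sequence of $\mathfrak{m}^\bullet/\mathfrak{m}^{\bullet,p}$ (filtered by the images of $\mathfrak{m}^{\bullet,j}$, $1\le j\le p-1$) with that of $(1+\mathfrak{m}^\bullet)/(1+\mathfrak{m}^{\bullet,p})$, while $\phi_2$ identifies the spectral sequence of $\mathfrak{m}^{\bullet,2}$ (whose $E_1$-page is supported in filtration degrees $2\le j\le p$) with that of $1+\mathfrak{m}^{\bullet,2}$. These two identifications are both given by $\exp_p$, hence agree on their common domain $\mathfrak{m}^{\bullet,2}/\mathfrak{m}^{\bullet,p}$.

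The key step is to verify that, for $k\le p-2$, every differential $d_k\colon E_k^{i,j}\to E_k^{i+1,j+k}$ of the full additive spectral sequence is detected by one of $\phi_1$, $\phi_2$, and that the resulting identifications are mutually consistent. If $j\ge 2$, then $d_k$ is computed by lifting to a cochain in $\mathfrak{m}^{\bullet,j}\subseteq\mathfrak{m}^{\bullet,2}$ and applying the cosimplicial differential, which is the same computation as in the standalone spectral sequence of $\mathfrak{m}^{\bullet,2}$, so $\phi_2$ carries it to $d_k^\times$. If $j=1$, then the target sits in filtration degree $1+k\le p-1$, and since truncating at $\mathfrak{m}^{\bullet,p}$ alters only filtration degrees $\ge p\ge k+2$, the lift-and-differentiate recipe gives the same class in $\mathfrak{m}^\bullet$ and in $\mathfrak{m}^\bullet/\mathfrak{m}^{\bullet,p}$, so $\phi_1$ carries it to $d_k^\times$. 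As the two standalone spectral sequences agree on filtration degrees $2\le j\le p-1$, between them they account for all the cycles, cycle conditions, and boundaries feeding the pages $E_k^{\ast,\ast}$ with $k\le p-1$; an induction on the page number, starting from the identification of $E_1$-pages, then yields $E_k^{\ast,\ast}\simeq E_k^{\ast,\ast,\times}$ for all $k\le p-1$, compatibly with $d_k$ for $k<p-1$.

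The scheme breaks at exactly $k=p-1$: the one differential left uncovered is $d_{p-1}\colon E_{p-1}^{i,1}\to E_{p-1}^{i+1,p}$, whose source lies where $\phi_1$ applies but whose target lies in filtration degree $p$, outside the truncation, while the source is not in $\mathfrak{m}^{\bullet,2}$, so $\phi_2$ does not help either. Quantitatively, the failure of $\exp_p$ to be a homomorphism, as measured by \eqref{expsumformula} and \eqref{alternatingsumthm}, places the discrepancy between the additive and multiplicative differentials on a cochain of filtration degree $j$ in filtration degree $\ge jp$; since $jp>j+k$ precisely when $(p-1)j>k$, this discrepancy is invisible at stage $k\le p-2$ for every $j\ge 1$ but becomes visible first at $j=1$, $k=p-1$. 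The resulting $d_{p-1}$ is the first unequal differential, whose explicit expression in terms of $\sigma_p$ is taken up below. The main technical obstacle is this bookkeeping: checking carefully that the spectral sequences of $\mathfrak{m}^{\bullet,2}$ and of $\mathfrak{m}^\bullet/\mathfrak{m}^{\bullet,p}$ genuinely cover all of the data entering $E_k^{\ast,\ast}$ for $k\le p-1$, and that they patch along their overlap, together with the elementary inequality $(p-1)j>k$ for $j\ge 1$ and $k\le p-2$ that underlies the whole argument.
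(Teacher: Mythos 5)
Your proposal is correct and follows essentially the same route as the paper: the paper derives this proposition directly from the two truncated-exponential isomorphisms of filtered cosimplicial abelian groups $\mathfrak{m}^{\bullet}/\mathfrak{m}^{\bullet, p} \simeq (1 + \mathfrak{m}^\bullet)/(1 + \mathfrak{m}^{\bullet, p})$ and $\mathfrak{m}^{\bullet, 2} \simeq 1 + \mathfrak{m}^{\bullet, 2}$, which is precisely your $\phi_1$ and $\phi_2$. Your explicit bookkeeping (that these two cover every $d_k$ with $k\le p-2$, leaving only $d_{p-1}\colon E^{i,1}\to E^{i+1,p}$ uncovered) is exactly the argument the paper leaves implicit when it says the proposition ``follows from the construction.''
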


The most important goal is to obtain a description of the first
``unstable''
differential $d_{p-1}$ (in the present setup of nonunital algebras with
$(p+1)$-fold products zero, it is also the last differential
in the spectral sequence). 

Let $P^\bullet$
be a cosimplicial $\mathbb{Z}[1/(p-1)!]$-module. Then we can consider 
the cosimplicial $\mathbb{Z}[1/(p-1)!]$-module
$\sym^p P^\bullet$ obtained by applying the $p$th symmetric power levelwise. 
We construct an operation
\[ H^i(P^\bullet) \to H^{i+1}( \sym^p P^\bullet).  \]

\begin{cons} 
\label{constructionofphi}
Let $x \in P^i$. 
We define
\[ \phi(x) \stackrel{\mathrm{def}}{=}\mu_p( \partial^0 x , \partial^1 x, \partial^2 x, \dots,  \partial^i x) \in \sym^p
P^{i+1}.   \]
Here the $\partial^k$ are the cosimplicial operators. 
We claim that: 
\begin{enumerate}
\item If $x$ is a cocycle in $P^i$, then $ \phi(x)$ is a cocycle in $\sym^p P^{i+1}$. 
\item If $x,y \in P^{i}$ are cohomologous cocycles, then $\phi(x)$
and $\phi(y)$ are cohomologous in $\sym^p P^{i+1}$. 
\end{enumerate}
The proofs of both these claims 
will be contained in \Cref{firstalgdiff} below. 
As a result, we will also abuse notation and use $\phi$ to denote the induced
map in cohomology. 
\end{cons}

Return to the previous spectral sequences for computing the cohomology of the
cosimplicial abelian groups $\mathfrak{m}^\bullet$ and $1 +
\mathfrak{m}^\bullet$. 
The $(p-1)$th differential in each spectral sequence, namely $d_{p-1}$ or $ d_{p-1, \times}$, runs from a sub-object of $H^i(
\mathfrak{m}^\bullet/\mathfrak{m}^{ \bullet, 2})$ to a quotient of $H^{i+1}(
\mathfrak{m}^{\bullet, p})$. 
\begin{prop}[First unstable differential]
\label{firstalgdiff}
The operator $\phi$ satisfies the claimed properties (1) and (2) above, and 
we have $d_{p-1, \times}= d_{p-1} + \phi$. 
\end{prop}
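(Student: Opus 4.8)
The strategy is to unwind everything to the cochain level. Both spectral sequences arise from the same filtered cosimplicial abelian group, with the $p$-truncated exponential $\exp_p$ providing a \emph{filtered} isomorphism $\mathfrak{m}^{\bullet}/\mathfrak{m}^{\bullet,p} \simeq (1+\mathfrak{m}^\bullet)/(1+\mathfrak{m}^{\bullet,p})$ that is the identity on the associated graded. The only place $\exp_p$ fails to be a cosimplicial map (equivalently, a chain map on the normalized/unnormalized cochains of the associated graded) is recorded by \eqref{alternatingsumthm}: applying the coboundary $\partial = \sum_i (-1)^i \partial^i$ multiplicatively to $\exp_p(x)$ for $x \in \mathfrak{m}^i$ produces $\exp_p(\partial x)$ times $\bigl(1 - \mu_p(\partial^0 x, \dots, \partial^i x)\bigr)$ modulo $\mathfrak{m}^{\bullet, p+1} = 0$. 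Since $\mu_p(\dots) \in \mathfrak{m}^{\bullet, p}$ and $\mathfrak{m}^{\bullet, p+1} = 0$, the group $1 + \mathfrak{m}^{\bullet, p}$ is canonically identified (additively) with $\mathfrak{m}^{\bullet, p}$ via $1 + y \mapsto y$, so this discrepancy term \emph{is} exactly $\phi$ at the cochain level. Thus the claim is really a chain-homotopy-theoretic statement about the failure of a filtered map to commute with differentials, and the correction to $d_{p-1}$ is precisely that failure.

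\textbf{Steps.} First I would set up the cochain-level picture carefully: write $C^\bullet = N^\bullet(\mathfrak{m}^\bullet)$ (or the unnormalized cochains) with its decreasing filtration $F^j C^\bullet$ by powers $\mathfrak{m}^{\bullet, j}$, and similarly $\widetilde{C}^\bullet$ for $1 + \mathfrak{m}^\bullet$. The map $E := \exp_p$ induces, for each pair of filtration indices with the relevant gap bounded by $p-1$, an actual isomorphism of filtered complexes, which is the identity on $\mathrm{gr}$; this gives the isomorphism of $E_k$-pages for $k \le p-1$ and commutation with $d_k$ for $k < p-1$ (this is the already-stated stable-range proposition, so I may cite it).

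Second, I would prove properties (1) and (2) of $\phi$ from Construction \ref{constructionofphi}. For (1): if $x \in \mathfrak{m}^i/\mathfrak{m}^{\bullet,2}$ is a cocycle, lift it to $\tilde x \in \mathfrak{m}^i$; then $\exp_p(\tilde x)$ is a multiplicative cochain whose coboundary lies in $1 + \mathfrak{m}^{\bullet, p}$ (since $\partial \tilde x \in \mathfrak{m}^{\bullet,2}$ and by \eqref{alternatingsumthm}), and identifying $1 + \mathfrak{m}^{\bullet,p}$ with $\mathfrak{m}^{\bullet,p}$ this coboundary is a cocycle there — its class is $\phi(x)$, and being a coboundary (in the \emph{multiplicative} complex, passed through the identification) it is automatically an additive cocycle: concretely, $\partial \mu_p(\partial^0\tilde x, \dots, \partial^i \tilde x) = 0$ in $\sym^p$ because $\partial^2 = 0$ multiplicatively. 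For (2): if $x, y$ are cohomologous, choose lifts $\tilde x, \tilde y$ with $\tilde x - \tilde y = \partial w + (\text{higher filtration})$ for some $w \in \mathfrak{m}^{i-1}/\mathfrak{m}^{\bullet,2}$; then $\exp_p(\tilde x)\exp_p(\tilde y)^{-1}$ differs from $\partial(\exp_p(\tilde w))$ by something in $1 + \mathfrak{m}^{\bullet,p}$, and chasing this through the identification shows $\phi(x) - \phi(y)$ is a coboundary in $\sym^p$. These are the formal ``a coboundary has zero class / the class is well-defined'' arguments, dressed up through the nonabelian-to-abelian dictionary afforded by $\mathfrak{m}^{\bullet, p+1} = 0$.

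\textbf{The identity $d_{p-1,\times} = d_{p-1} + \phi$.} This is the heart. I would compute the $d_{p-1}$ differential of a class $[x] \in E_{p-1}^{i,1}$ in both spectral sequences via the standard recipe: represent $x$ by a cochain in $F^1 C^i$ whose additive coboundary lies in $F^2$; by the stable-range identification this same cochain (after applying $\exp_p$) represents the class in $\widetilde{C}$, and its \emph{multiplicative} coboundary again lies in $1 + F^2$. One then keeps correcting by elements of higher filtration until the coboundary lands in the lowest possible piece, $F^p$; the resulting class in $H^{i+1}(\mathrm{gr}^p)$ is $d_{p-1}$ of $[x]$. The key point is that $\exp_p$ intertwines the additive and multiplicative zig-zags \emph{exactly} in filtrations $\le p-1$ — this is again the stable range — so the only discrepancy between the two $d_{p-1}$'s appears at the very last step, when we pass from filtration $p-1$ to filtration $p$. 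At that step, the multiplicative coboundary of $\exp_p(\text{cochain})$ versus $\exp_p(\text{additive coboundary of cochain})$ differs precisely by $1 - \mu_p(\partial^0(\cdot),\dots,\partial^i(\cdot))$ by \eqref{alternatingsumthm}, i.e., by $1 - \phi(\text{cochain})$, and under the identification $1 + \mathfrak{m}^{\bullet,p} \simeq \mathfrak{m}^{\bullet,p}$ this contributes an additive summand $-\phi(x)$... modulo a sign bookkeeping I would fix so the statement comes out as $+\phi$. So $d_{p-1,\times}[x] = d_{p-1}[x] + \phi([x])$.

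\textbf{Main obstacle.} I expect the genuine difficulty to be purely bookkeeping: (i) keeping track of which correction cochains at intermediate filtration stages feed into the final $\mu_p$ term, i.e., verifying that the intermediate corrections (which live in filtrations $2$ through $p-1$) do \emph{not} themselves contribute to the filtration-$p$ output beyond what $\phi$ already records — this uses $\mathfrak{m}^{\bullet,p+1}=0$ to kill all cross terms, and the fact that $\sigma_p$, hence $\mu_p$, is ``homogeneous of degree $p$'' so that replacing an argument by an element of higher filtration lands in $\mathfrak{m}^{\bullet, p+1} = 0$; and (ii) the sign: \eqref{alternatingsumthm} involves the alternating pattern $y_0, -y_1, \dots$, and one must check that the cosimplicial coboundary sign conventions match so that $\phi$ as \emph{defined} via $\mu_p$ of the $\partial^k x$ is exactly the correction term with the correct sign. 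Neither is conceptually hard, but both are the kind of thing where a careless index shift breaks the statement, so I would be explicit about the cosimplicial identities used and about the identification $1+\mathfrak{m}^{\bullet,p}\simeq\mathfrak{m}^{\bullet,p}$.
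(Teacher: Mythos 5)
Your proposal is correct and follows essentially the same route as the paper: both compute $d_{p-1}$ and $d_{p-1,\times}$ as the connecting homomorphisms of the short exact sequences $0 \to \mathfrak{m}^{\bullet,p} \to \mathfrak{m}^\bullet \to \mathfrak{m}^\bullet/\mathfrak{m}^{\bullet,p} \to 0$ and its multiplicative analogue, lift a cocycle $\widetilde{x}$ to a chain $x'$, and read off the discrepancy $\mu_p(\partial^0 x',\dots,\partial^i x')$ from \eqref{alternatingsumthm} (which, since the product sits in the denominator there, inverts to give the $+\phi$ sign you flagged); likewise your cochain-level verification of properties (1) and (2) is the paper's ``reverse the argument'' step, carried out there in the universal truncated symmetric algebra $\bigoplus_{0<k\leq p}\sym^k P^\bullet$ so that the cocycle condition holds in $\sym^p P^\bullet$ itself rather than only in its image in $\mathfrak{m}^{\bullet,p}$.
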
 

Let us explain this formula. We have a natural (multiplication) morphism of cosimplicial
abelian groups $\sym^p (
\mathfrak{m}^\bullet/\mathfrak{m}^{\bullet, 2}) \to \mathfrak{m}^{\bullet , p}$. 
The operator $\phi$ gives us  a natural map from $H^i(
\mathfrak{m}^\bullet/\mathfrak{m}^{ \bullet, 2})$ to 
$H^{i+1}( \sym^p (\mathfrak{m}^\bullet/\mathfrak{m}^{ \bullet, 2}))$ and we
compose that with the multiplication map.  

\begin{proof} 
We unwind the definitions. A class $x \in H^i( \mathfrak{m}^\bullet/\mathfrak{m}^{
\bullet, 2})$ that has survived to the $E_{p-1}$ stage of the spectral sequence, by
definition, lifts to a class  in $H^i(
\mathfrak{m}^\bullet/\mathfrak{m}^{\bullet, p})$, which we represent by a
degree $i$ cycle $\widetilde{x}$. 
The short exact sequence of cosimplicial abelian groups
\[ 0 \to  \mathfrak{m}^{ \bullet, p} \to \mathfrak{m}^\bullet \to
\mathfrak{m}^\bullet/\mathfrak{m}^{ \bullet, p} \to 0,   \]
leads to a connecting homomorphism 
\[ \delta \colon  H^i(\mathfrak{m}^\bullet/\mathfrak{m}^{ \bullet, p}) \to
H^{i+1}(\mathfrak{m}^{ \bullet, p}).  \]
The differential $d_{p-1}(x)$ is given by the image of $\delta( \widetilde{x})$. 

In the multiplicative setting, we know that the cycle $\exp_p( \widetilde{x}) \in (1 +
\mathfrak{m}^\bullet)/(1 + \mathfrak{m}^{\bullet, p})$ lifts the image of $x$
(or rather, $1  + x$)
in the $E_1$-page of the multiplicative spectral sequence. 
Now $d_{p-1, \times}(x)$ is given as follows: one considers the short exact
sequence of cosimplicial abelian groups
\[ 0 \to  (1 + \mathfrak{m}^{ \bullet, p}) \to (1 + \mathfrak{m}^\bullet )\to
(1 + \mathfrak{m}^\bullet)/(1 + \mathfrak{m}^{\bullet, p}) \to 0 ,   \]
and the induced connecting homomorphism 
\[ \delta_{\times} \colon H^i (
(1 + \mathfrak{m}^\bullet)/(1 + \mathfrak{m}^{\bullet, p})) 
\to H^{i+1}(1 + \mathfrak{m}^{\bullet, p}  ) .
\]
Unwinding the definitions, the multiplicative differential $d_{p-1, \times}(x)$
is given by 
$$d_{p-1, \times}(x) = \delta_{\times}( \exp_p( \widetilde{x})) \in H^{i+1}(1 + \mathfrak{m}^{
\bullet, p}  ) \simeq H^{i+1}( \mathfrak{m}^{ \bullet, p}),$$
or rather the image in the quotient of this at the $E_{p-1}$-page.

It follows now that we need to compare the additive and the multiplicative
boundary maps $\delta, \delta^{\times}$. 
Given a cycle 
$\widetilde{x} \in \mathfrak{m}^\bullet/\mathfrak{m}^{\bullet,
p}$
in degree $i$,  projecting to a cycle and induced cohomology class $x \in \mathfrak{m}^{\bullet}/\mathfrak{m}^{\bullet,
2}$, we claim that
\begin{equation} \delta_{\times}( \exp_p(\widetilde{x})) = 
1 + \delta( \widetilde{x}) + \phi( \widetilde{x}) \in H^{i+1}( 1 + \mathfrak{m}^{\bullet,
p}) \simeq H^{i+1}( \mathfrak{m}^{\bullet, p}).
\end{equation}

To prove this, we unwind the definitions again. Lift $\widetilde{x}$ to a
\emph{chain} $x' \in \mathfrak{m}^\bullet$ (in degree $i$). 
The additive connecting homomorphism $\delta( \widetilde{x})$ is obtained by
considering the image in cohomology of the 
alternating sum $y' = \sum_k ( -1)^k
\partial^k x'$, which is an $(i+1)$-cocycle in $\mathfrak{m}^{\bullet, p}$. 
For
the multiplicative version, we have to consider
the cocycle
\[ \delta_{\times}( \exp_p( \widetilde{x})) = \prod_k (\partial^k \exp_p( x'))^{(-1)^k} = \prod_k  \exp_p( \partial^k
x')^{(-1)^k} \in 1 + \mathfrak{m}^{\bullet, p+1}.   \]
Now we use \eqref{alternatingsumthm}, which gives us 
\begin{align*} \delta_{\times}( \exp_p( \widetilde{x})) & = 
\exp_p\left( \sum (-1)^k \partial^k x' \right) + \mu_p( \partial^0 x',  \partial^1
x', \dots,  \partial^i x') \\
& = \exp_p( y') + \mu_p( \partial^0 x',  \partial^1
x', \dots,  \partial^i x') \\
& = 1 + y' + \mu_p( \partial^0 x',  \partial^1
x', \dots,  \partial^i x') .
\end{align*}
In cohomology, this is the desired claim, provided we can justify the claims
about $\phi$ made in \Cref{constructionofphi}. 

The justification of \Cref{constructionofphi} now follows from reversing the
above argument. Keep the notation from there. 
We consider the commutative nonunital
cosimplicial algebra
\[ A^\bullet = \bigoplus_{0 < k \leq p} \sym^k P^\bullet 
\]
and its quotient $\overline{A}^\bullet = \bigoplus_{0 < k < p} \sym^k
P^\bullet$. Choose a cycle $x \in P^\bullet \subset \overline{A}^\bullet$,
which clearly extends to a cycle in $A^\bullet$. 
Then $\exp_p( x)$ is a cycle in $1 + \overline{A}^\bullet$. 
The coboundary of this class in $\sym^p P^\bullet \simeq 1 + \sym^p P^\bullet
\subset 1 + A^\bullet$
is given by $\phi(x)$ by the above analysis. This shows that $\phi(x)$ is a
cycle and that its cohomology class depends only on that of $x$. 
\end{proof}

\subsection{Identification with $\beta \mathcal{P}^0$}
	
Let $P^\bullet$ be a cosimplicial $\mathbb{Z}[1/(p-1)!]$-module. 
In the previous subsection, we 
described a natural operation (based on a formula)
\begin{equation}  \phi \colon H^i( P^\bullet) \to H^{i+1}( \sym^p P^\bullet).
\end{equation} 

Suppose $i > 0$.\footnote{When $i = 0$, the power operation $\beta \mathcal{P}^0 = 0$
and the operation $\phi$ is identically zero, so we shall not discuss this case.} 
In the setting of cosimplicial $\mathbb{F}_p$-vector spaces, 
the work of Priddy \cite{priddy} has classified all such operations, which
amount to determining the cohomology of the symmetric powers of a
cosimplicial vector space (one reduces to an Eilenberg--MacLane complex).  
In particular, for cosimplicial $\mathbb{Z}[1/(p-1)!]$-modules, 
the only such natural operations are the scalar multiples of a single operation
called $\beta \mathcal{P}^0$, as one sees from Priddy's work (and the
Bockstein spectral sequence).
In other words, in the ``universal'' case when $H^*(P^\bullet)$ is a
$\mathbb{Z}[1/(p-1)!]$ (generated by $\iota$) concentrated in degree $i$, we have $H^{i+1} ( \sym^p
P^\bullet) \simeq \mathbb{Z}/p$, generated by the class $\beta \mathcal{P}^0
\iota$.

Given a cosimplicial $\mathbb{F}_p$-vector space $P^\bullet$, the cosimplicial
commutative ring $\sym^* P^\bullet$ yields (upon taking the totalization) an
$\einfty$-algebra over $\mathbb{F}_p$. 
Then $\beta \mathcal{P}^0$ is identified with the power operation on
$\einfty$-algebras over $\mathbb{F}_p$ with the same name. 

In this section, we will prove: 
\begin{prop} 
\label{betaP0phi}
For cosimplicial $\mathbb{Z}[1/(p-1)!]$-modules, we have
$\phi = -\beta \mathcal{P}^0$.
As a result, in the situation of \Cref{firstalgdiff}, we have
\begin{equation} d_{p-1, \times} = d_{p-1} - \beta \mathcal{P}^0.
\end{equation}
\end{prop}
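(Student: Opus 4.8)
The plan is to reduce to the universal case and then identify the operation $\phi$ with (a scalar multiple of) $\beta\mathcal{P}^0$ by a direct computation on a small model. First I would recall from the discussion preceding the proposition that, by Priddy's computation of the cohomology of the symmetric powers of a cosimplicial $\mathbb{F}_p$-module (equivalently, of the derived functors of the symmetric power, or the homotopy of the free $\einfty$-$\mathbb{F}_p$-algebra on an Eilenberg--MacLane spectrum), the only natural transformations $H^i(P^\bullet)\to H^{i+1}(\sym^p P^\bullet)$ for $i>0$ are $\mathbb{Z}[1/(p-1)!]$-linear combinations of $\beta\mathcal{P}^0$. Since $\phi$ is a natural operation (it is defined by the universal formula $\phi(x)=\mu_p(\partial^0 x,\dots,\partial^i x)$, and \Cref{firstalgdiff} already established that it is well-defined on cohomology and natural), it must therefore equal $c\cdot\beta\mathcal{P}^0$ for some universal constant $c\in\mathbb{Z}/p$. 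The entire content of the proposition is thus the evaluation $c=-1$.

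To pin down $c$, I would work in the universal example: take $P^\bullet$ to be a cosimplicial $\mathbb{Z}[1/(p-1)!]$-module whose cohomology is free of rank one on a generator $\iota$ concentrated in a single degree $i>0$ — e.g. a shifted, truncated model, or even better the cobar-type complex computing $\Ext$ in the category of comodules so that $\iota$ is represented by an explicit cocycle. The key is that $\phi(\iota)=\mu_p(\partial^0\iota,\dots,\partial^i\iota)$, and by the definition of $\mu_p$ this is $\sigma_p(\partial^0\iota,-\partial^1\iota,\dots,(-1)^i\partial^i\iota)-\sum_{k\text{ odd}}\sigma_p(\partial^k\iota,-\partial^k\iota)$, where $\sigma_p(x_1,\dots,x_k)=\big((\sum x_j)^p-\sum x_j^p\big)/p!$. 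I would then compare this, in the free $\einfty$-$\mathbb{F}_p$-algebra $\sym^* P^\bullet$ (or its totalization), with the standard cocycle-level formula for $\beta\mathcal{P}^0$: on a degree-$i$ class, $\mathcal{P}^0$ is the identity (or Frobenius-twist), and $\beta\mathcal{P}^0$ is computed by the connecting homomorphism associated to reduction mod $p$ applied to the $p$-th power, i.e. precisely by the term $\big((\sum\pm\partial^k\iota)^p-\sum(\pm\partial^k\iota)^p\big)/p$ up to the unit $(p-1)!$ — which is exactly $(p-1)!\,\sigma_p$ of the alternating inputs. The sign discrepancy — a single minus sign coming from the direction of the connecting map, or equivalently from the fact that $\exp_p$ appears in $\delta_\times(\exp_p(\widetilde x))$ with the displacement term on the side opposite to $d_{p-1}$ in the formula $d_{p-1,\times}=d_{p-1}+\phi$ of \Cref{firstalgdiff} — produces $c=-1$.

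The main obstacle I expect is the bookkeeping of signs and normalizing constants: reconciling the factor $1/p!$ in $\sigma_p$ against the factor $1/p$ that enters the Bockstein, tracking the alternating signs $(-1)^k$ attached to the cofaces, and making sure the identification of the abstract operation $\beta\mathcal{P}^0$ coming from Priddy's classification agrees on the nose (not just up to a scalar) with the $\einfty$-power operation of the same name. To be safe I would also double-check the constant in one genuinely nontrivial instance — for example the case $i=1$, where $\phi$ lands in $H^2(\sym^p P^\bullet)$ and everything can be written out with explicit low-degree cochains — rather than relying solely on the universal-case normalization. Once the constant is confirmed, the formula $d_{p-1,\times}=d_{p-1}-\beta\mathcal{P}^0$ follows immediately by substituting $\phi=-\beta\mathcal{P}^0$ into the conclusion of \Cref{firstalgdiff}.
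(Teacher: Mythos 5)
Your first step coincides with the paper's: both arguments use Priddy's classification to conclude that $\phi$, being a natural operation $H^i(P^\bullet)\to H^{i+1}(\sym^p P^\bullet)$ on cosimplicial $\mathbb{Z}[1/(p-1)!]$-modules, must be $c\cdot\beta\mathcal{P}^0$ for a single universal constant $c$, so that the whole proposition reduces to showing $c=-1$. Where you diverge is in how $c$ is pinned down, and this is where your proposal has a genuine gap. You propose a direct cochain-level comparison of $\mu_p(\partial^0\iota,\dots,\partial^i\iota)$ with a cocycle representative for $\beta\mathcal{P}^0$, but you never actually carry it out: the value $c=-1$ is asserted to come from ``a single minus sign coming from the direction of the connecting map,'' and you yourself flag the reconciliation of the $1/p!$ in $\sigma_p$ against the $1/p$ in the Bockstein, and the normalization of the abstract operation $\beta\mathcal{P}^0$, as the main obstacle. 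That obstacle is real and is the entire content of the statement. Indeed, a naive version of your computation in the universal case (where $d\tilde\iota=0$, so $\sum_k(-1)^k(\partial^k\tilde\iota)^p=-p!\,\sigma_p(\dots)$ and hence the Bockstein of $\tilde\iota^p$ is $-(p-1)!$ times the representative of $\phi(\iota)$) combined with Wilson's theorem $(p-1)!\equiv-1\pmod p$ produces $c=+1$ unless one is careful about which normalization of $\mathcal{P}^0$ (the bare Frobenius versus the conventionally rescaled power operation) Priddy's generator refers to. So as written the proposal does not establish the sign, and could just as easily ``prove'' the wrong one.

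The paper sidesteps exactly this bookkeeping by normalizing against a concrete example rather than a formula. It takes $\overline{R}=\mathbb{Z}_p[\zeta_p]/\mathfrak{m}^{p+1}$ with $\mathfrak{m}=(1-\zeta_p)$ and coefficients in $X_\bullet=K(\mathbb{Z}/p,n)$. There $1+\overline{\mathfrak{m}}\simeq(\mathbb{Z}/p)^p$ is elementary abelian, so the multiplicative spectral sequence can support no differential and $d_{p-1,\times}$ must kill the class $(1-\zeta_p)\iota$; on the other hand the additive $d_{p-1}$ sends $(1-\zeta_p)\iota$ to $(1-\zeta_p)^p\beta\iota$ (via the extension $\mathbb{Z}/p\to\mathbb{Z}/p^2\to\mathbb{Z}/p$ and the relation $p(1-\zeta_p)\equiv(1-\zeta_p)^p$), and $\beta\mathcal{P}^0$ of the same class is computed unambiguously as Bockstein-of-Frobenius to be $(1-\zeta_p)^p\beta\iota$ as well. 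The vanishing of $d_{p-1,\times}=d_{p-1}+c\,\beta\mathcal{P}^0$ on this class then forces $c=-1$ with no cochain formulas at all. This is essentially the ``one genuinely nontrivial instance'' you say you would check as a fallback; given the fragility of the direct computation, that check is not optional but is the proof. (Note also that for the applications in the paper only the Frobenius-semilinearity and nonvanishing of the correction term are used, but the statement as posed asserts the exact constant, so some such normalization argument is required.)
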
 

While one can compare explicit formulas, we will approach the determination of
the operator $\phi$  by considering examples. 
We note that this formula will not be used in the sequel. Rather, it will
suffice to know that $\phi$ is Frobenius-semilinear.

First of all, we give an example to show that $\phi$ is not identically zero,
i.e., that the additive and multiplicative versions of $d_{p-1}$
can in fact differ. 
\begin{example} 
Consider the commutative algebra 
$\mathbb{F}_p[x]/x^{p+1}$ and the ideal $\mathfrak{m} = (x)$, so that
$\mathfrak{m}^{p+1} = 0$. 
We then have an isomorphism 
\[ (1 + \mathfrak{m})/(1 + \mathfrak{m}^{p}) \simeq (\mathbb{Z}/p)^{p-1},  \]
as one sees via the truncated exponential and logarithm. However, we also have 
\[ 1 + \mathfrak{m} \simeq \mathbb{Z}/p^2 \oplus (\mathbb{Z}/p)^{p-2}. \]
In fact, the unit $1+x \in  1 + \mathfrak{m}$ has order exactly $p^2$.
Meanwhile, $\exp_p(x^2), \dots, \exp_p(x^{p-1})$ each have order $p$ (one
could also take $1 + x^2, \dots, 1 + x^{p-1}$).  
One sees that the map 
\[ 1 + \mathfrak{m}  \to (1 + \mathfrak{m})/(1 + \mathfrak{m}^p) \]
can be identified with the direct sum of the reduction map $\mathbb{Z}/p^2 \to
\mathbb{Z}/p$ and the identity $(\mathbb{Z}/p)^{p-2} \to (\mathbb{Z}/p)^{p-2}$.

Now let $X_\bullet $ be a simplicial set and suppose that there exists a class
in $H^n(X_\bullet, \mathbb{Z}/p)$ which does not lift to $H^n(X_\bullet,
\mathbb{Z}/p^2)$. 
Form the nonunital cosimplicial commutative algebra $F(X_\bullet,
\mathfrak{m}) = \mathfrak{m}^{X_\bullet}$, whose group of units is $F(X_\bullet, 1 + \mathfrak{m})$. We can apply the machinery of the previous subsection to this example,
where the filtration in question is the $\mathfrak{m}$-adic filtration. 

The additive spectral sequence converges to the cohomology of $X_\bullet$ with
coefficients in $(\mathbb{Z}/p)^p$ and the multiplicative spectral sequence
converges to the cohomology with coefficients in $\mathbb{Z}/p^2 \oplus
(\mathbb{Z}/p)^{p-2}$. 
We see in
particular that, while there are no differentials additively (the filtration
splits), there is necessarily a $d_{p-1, \times}$ since there are cohomology
classes in $H^n(X_\bullet, \mathbb{Z}/p)$ which do not lift to $H^n(X_\bullet,
\mathbb{Z}/p^2)$.
\end{example}

We now describe an example where the additive spectral sequence supports a
differential but the multiplicative spectral sequence does not. 
\begin{example} 
Consider the complete local ring $R = \mathbb{Z}_p[\zeta_p]$ and the maximal
ideal $\mathfrak{m} = (1  - \zeta_p)$. 
We consider the quotient $\overline{R} = R/\mathfrak{m}^{p+1}$ and the image
$\overline{\mathfrak{m}}$ of $\mathfrak{m}$. 
We have: 
\begin{enumerate}
\item $(1 + \overline{\mathfrak{m}})/(1 + \overline{\mathfrak{m}}^p) \simeq
\overline{\mathfrak{m}}/\overline{\mathfrak{m}^p}  \simeq
(\mathbb{Z}/p)^{p-1}$. (Note here that $p \in\mathfrak{m}^{p-1}\setminus
\mathfrak{m}^p$.)
\item $\overline{\mathfrak{m}} \simeq \mathbb{Z}/p^2 \oplus
(\mathbb{Z}/p)^{p-2}$. The generator of the $\mathbb{Z}/p^2$ is $(1 - \zeta_p)$
and the generators of the remaining terms are $(1 - \zeta_p)^i, i = 2, \dots,
p-1$.
The map $\overline{\mathfrak{m}} \to
\overline{\mathfrak{m}}/\overline{\mathfrak{m}^p}$ is a direct sum of
the reduction $\mathbb{Z}/p^2 \twoheadrightarrow \mathbb{Z}/p$ and the identity
$(\mathbb{Z}/p)^{p-2} \to (\mathbb{Z}/p)^{p-2}$.
\item $1 + \overline{\mathfrak{m}} \simeq (\mathbb{Z}/p)^{p}$. 
In fact, one sees that every element has order $p$ here (since $\zeta_p$ does). 
The map $1 + \overline{\mathfrak{m}} \to (1 + \overline{\mathfrak{m}})/(1 +
\overline{\mathfrak{m}}^p)$ is a surjection $(\mathbb{Z}/p)^p \to
(\mathbb{Z}/p)^{p-1}$.
\end{enumerate}
Let $X_\bullet$ be a simplicial set as in the previous example. 
In this case, reasoning analogously, we find that the \emph{additive} spectral sequence for 
$F(X_\bullet, \overline{\mathfrak{m}})$
must support a
$d_{p-1}$ but the multiplicative spectral sequence does not. 
\end{example}

\begin{proof}[Proof of \Cref{betaP0phi}] 
We will analyze the last example more carefully to determine the precise
multiple.\footnote{We note that the precise multiple will not be used in the
sequel, only that it is not zero.} 
Note that $\overline{\mathfrak{m}}/\overline{\mathfrak{m}}^2 \simeq
\mathbb{F}_p$ (generated by the image of $1 - \zeta_p$) and 
$\overline{\mathfrak{m}}^p\simeq
\mathbb{F}_p$ (generated by the image of $(1 - \zeta_p)^p$). 
The (additive) differential $d_{p-1}$ comes from the short exact sequence
\begin{equation} \label{thisses} 0 \to \overline{\mathfrak{m}}^p \to \overline{\mathfrak{m}} \to
\overline{\mathfrak{m}}/\overline{\mathfrak{m}}^p \to 0,  \end{equation}
and we have a commutative diagram
of short exact sequences
\begin{equation}\label{natural:ses} \xymatrix{
0 \ar[r] &  \mathbb{Z}/p \ar[r]^{1 \mapsto p} \ar[d]  &  \mathbb{Z}/p^2 \ar[r] \ar[d]  &
\mathbb{Z}/p \ar[d] \ar[r]
&  0 \\
0 \ar[r] & \overline{\mathfrak{m}}^p \ar[r] &  \overline{\mathfrak{m}} \ar[r]
&  \overline{\mathfrak{m}}/\overline{\mathfrak{m}}^p \ar[r] &  0
}.\end{equation}
Here the map $\mathbb{Z}/p^2 \to \overline{\mathfrak{m}}$ carries $1 \mapsto
1 - \zeta_p$. 
Note that $p(1 - \zeta_p) \equiv (1 - \zeta_p)^p $ in $\overline{\mathfrak{m}}$
as one sees from the expression $\prod_{i=1}^{p-1} (T - \zeta_p^i) = T^{p-1} +
T^{p-2} + \dots + 1$ with $T =1$. 

Take $X_\bullet = K( \mathbb{Z}/p, n)$, so that $H^n( X_\bullet, 
\mathbb{F}_p) = H^{n+1}(X_\bullet, \mathbb{F}_p) = \mathbb{F}_p$.
We let $\iota$ be a generator of the former, so that $\beta \iota$ is a
generator of the latter for $\beta$ the Bockstein.
Again, this analysis is based on the cosimplicial commutative ring
$\overline{\mathfrak{m}}^{X_\bullet}$. 
\begin{enumerate}
\item Note that  $H^n(X_\bullet, \overline{\mathfrak{m}}/\overline{\mathfrak{m}}^p)
\simeq H^n(X_\bullet, \mathbb{F}_p)
\otimes_{\mathbb{F}_p}\overline{\mathfrak{m}}/\overline{\mathfrak{m}}^p \simeq
\overline{\mathfrak{m}}/\overline{\mathfrak{m}}^p \iota$ and similarly for
$H^{n+1}(X_\bullet, \overline{\mathfrak{m}}^p)$. The differential in question 
comes from the connecting homomorphism in the short exact sequence
\eqref{thisses}, so that by naturality of \eqref{natural:ses}, it carries $(1 - \zeta_p) \iota$ to $(1 -
\zeta_p)^p \beta\iota \in H^{n+1}(X_\bullet, \overline{\mathfrak{m}^p})$.
\item The associated graded of $\overline{\mathfrak{m}}$ comes from the
reduction of a $\mathbb{Z}$-algebra, so $\beta \mathcal{P}^0$ is the composite
of the Bockstein and $\mathcal{P}^0$. Now $\mathcal{P}^0( (1 - \zeta_p)
\iota) = (1 - \zeta_p)^p \iota \in H^n( X, \overline{\mathfrak{m}}^p)$ since
$\mathcal{P}^0$ is the Frobenius on underlying rings. Applying $\beta$ to that,
we get $(1 - \zeta_p)^p \beta \iota$. 
\end{enumerate}
Putting these together, it follows that if the class corresponding to $(1 -
\zeta_p) \iota$ is to survive in the multiplicative spectral sequence (as it
must), the formula must be as desired. 
\end{proof} 

We refer to \cite[Lecture 27]{Mumford} for an instance of the appearance of
these Bockstein operators in the deformation theory of the Picard scheme in
characteristic $p$.
In that case, there are no algebraic obstructions, but the multiplicative
operations are controlled by the Bocksteins.

\section{The algebraic Picard group for the $C_p$-action on $E_n$}\label{sec:algPic}

We keep the previous notation and fix a prime number $p$, setting $n = p-1$.
Let $C_p \subset \mathbb{G}$ be a maximal finite $p$-subgroup. 
Recall that $(E_n)_0 \simeq \W( \mathbb{F}_{p^n})[\![u_1, \dots, u_{n-1}]\!]$
is a complete local ring; we denote the maximal ideal by $\mathfrak{m}$. It
acquires a $C_p$-action by restriction.

In this section, we apply the tools of
the preceding section and calculate the algebraic
Picard group $H^1(C_p, (E_n)_0^{\times})$, showing that it is cyclic of
order $p$. This will be a key input into the topological descent spectral
sequence for Picard groups. 
The approach will be to use the $\mathfrak{m}$-adic filtration on
$(E_n)_0^{\times}$. 

\subsection{The additive spectral sequence}

The associated graded of the $\mathfrak{m}$-adic filtration 
of $(E_n)_0$ has been determined in unpublished work of Hopkins--Miller
\cite{hopkins_miller} and
studied additionally in work of Hill \cite{Hillthesis}. 

Since $(E_n)_0$ is a regular local ring, one has canonical and
$C_p$-equivariant isomorphisms
\[ \mathfrak{m}^i/\mathfrak{m}^{i+1} \simeq \sym^i (\mathfrak{m}/\mathfrak{m}^2)
, \quad i \geq 0,\]
where the symmetric powers are taken over the residue field
$\mathbb{F}_{p^n}$ (upon which the group $C_p$ acts trivially). 
The first main result is the following, which is a consequence of the
coordinates given on \cite[p. 497]{nave_smith_toda}. 
\begin{prop}[Hopkins--Miller] 
As a $C_p$-representation, $\mathfrak{m}/\mathfrak{m}^2$ is the reduced regular
representation $\overline{\rho}$ over $\mathbb{F}_{p^n}$. 
\end{prop}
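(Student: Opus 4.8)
The plan is to identify $\mathfrak{m}/\mathfrak{m}^2$ as an $\mathbb{F}_{p^n}[C_p]$-module by exhibiting an explicit basis adapted to the $C_p$-action, using the known formulas for the action of a finite order $p$ element on $(E_n)_*$. Recall from \Cref{sec:stabgroup} that $(E_n)_0 = \W(\mathbb{F}_{p^n})[\![u_1,\dots,u_{n-1}]\!]$, so $\mathfrak{m} = (p, u_1,\dots,u_{n-1})$ and $\mathfrak{m}/\mathfrak{m}^2$ is an $n$-dimensional $\mathbb{F}_{p^n}$-vector space with basis given by the images of $p, u_1, \dots, u_{n-1}$. Since $n = p-1$, an $n$-dimensional representation of $C_p$ over a field of characteristic $p$ is either the reduced regular representation $\overline{\rho}$ (the unique indecomposable of dimension $p-1$, i.e.\ the Jordan block of size $p-1$ for a generator) or a sum of smaller indecomposables; so it suffices to show the $C_p$-action on $\mathfrak{m}/\mathfrak{m}^2$ is \emph{indecomposable}, equivalently that a generator $g$ of $C_p$ acts with a single Jordan block, equivalently that $(g-1)$ has rank exactly $n-1$ on $\mathfrak{m}/\mathfrak{m}^2$.

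The key computational input is the formula recalled above (from \cite[Prop.~3.3, Thm.~4.4]{dev_hop_action}, in the form used by Nave) for the action of $g = \sum_{j=0}^{n-1} a_j S^j$ of order $p$: one has $a_0 \equiv 1 \bmod p$ and $a_1$ a unit, and modulo $(p, \mathfrak{m}^2)$,
\[
g_*(u) \equiv a_0 u + a_{n-1}^{\phi} u u_1 + \dots + a_1^{\phi^{n-1}} u u_{n-1}, \qquad
g_*(u u_i) \equiv a_0^{\phi^i} u u_i + \dots + a_{i-1}^{\phi} u u_1.
\]
Dividing the second relation by the first (or just reading off the induced map on $\mathfrak{m}/\mathfrak{m}^2$ after noting $u$ is a unit and $g_*(u) \equiv u \bmod \mathfrak{m}$), one finds that $g$ acts on the ordered basis $(\bar u_{n-1}, \bar u_{n-2}, \dots, \bar u_1, \bar p)$ of $\mathfrak{m}/\mathfrak{m}^2$ by an \emph{upper triangular unipotent} matrix whose first superdiagonal entries are, up to Frobenius twists, the units $a_1, a_1^{\phi}, \dots$ — in particular all nonzero. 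Hence $(g-1)$ has rank exactly $n-1$, so the Jordan form of $g$ is a single block of size $n = p-1$, and $\mathfrak{m}/\mathfrak{m}^2 \cong \overline{\rho}$. (One must also check $\bar p$ is genuinely involved and not split off: since $p \in \mathfrak{m}\setminus\mathfrak{m}^2$ it contributes a basis vector, and it sits at the end of the Jordan string because the formulas show the subspace spanned by $\bar u_1,\dots,\bar u_{n-1}$ is not $g$-stable — $g_*(u)$ itself already picks up $u u_i$ terms — while $g$ fixes $\bar p$ modulo that subspace.)

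The main obstacle is bookkeeping: one has to translate the formulas above, which describe $g_*$ on the degree $-2$ and degree $-4$ parts of $E_*$ (the elements $u$, $uu_i$), into a clean statement about the degree-$0$ module $\mathfrak{m}/\mathfrak{m}^2$, being careful about the unit $u$, the Frobenius twists $\phi^i$ coming from $\W(\mathbb{F}_{p^n})$, and the placement of the generator $\bar p$ in the filtration. The representation-theoretic half is essentially free: over $\mathbb{F}_p$ (hence over $\mathbb{F}_{p^n}$) the indecomposable $\mathbb{F}_p[C_p]$-modules are exactly the Jordan blocks $J_k$ for $1\le k\le p$, so an $n=(p-1)$-dimensional module is $\overline\rho = J_{p-1}$ precisely when a generator acts with $(g-1)$ of maximal rank $n-1$, which is exactly what the explicit matrix gives. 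As noted in the text, this is originally Hopkins--Miller; the cited coordinates on \cite[p.~497]{nave_smith_toda} make the superdiagonal units completely explicit, so the proof amounts to citing those coordinates and reading off the Jordan type.
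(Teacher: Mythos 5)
Your overall strategy (show $(g-1)$ has rank $n-1$ on the $n$-dimensional space $\mathfrak{m}/\mathfrak{m}^2$, hence a single Jordan block, hence $\overline{\rho}$) is the right one, and it is essentially what the paper intends by citing Nave's coordinates -- the paper gives no proof beyond that citation. However, as written your argument has a genuine gap at exactly the step you relegate to a parenthesis. The formulas you invoke for $g_*(u)$ and $g_*(uu_i)$ are only valid modulo $(p,\mathfrak{m}^2)$, so they determine the $C_p$-action on the $(n-1)$-dimensional quotient $\mathfrak{m}/(\mathfrak{m}^2+(p))$ and nothing more. From them you correctly get that $g$ acts on the $\bar u_i$'s by a unipotent matrix with units $a_1^{\phi^{i-1}}$ on the first off-diagonal, which gives $\operatorname{rank}(g-1)\geq n-2$ and exhibits $V=\mathfrak{m}/\mathfrak{m}^2$ as an extension
\[ 0 \to \mathbb{F}_{p^n}\cdot\bar p \to V \to J_{n-1} \to 0 \]
of $C_p$-modules, with $\bar p$ fixed (since $g$ is a ring automorphism). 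But both $V\cong J_n=\overline{\rho}$ and $V\cong J_{n-1}\oplus\mathbf{1}$ are consistent with this data; the difference is whether $\bar p$ lies in the image of $g-1$, i.e.\ whether the extension splits, and information taken modulo $p$ is blind to precisely this. Your stated justification does not close the gap: the fact that $g_*(u)$ picks up $uu_i$ terms says nothing about the stability of the span of the $\bar u_i$ inside $\mathfrak{m}/\mathfrak{m}^2$ (the element $u$ lies in $\pi_{-2}E_n$, not in $\mathfrak{m}\subset\pi_0E_n$), and even if that span were not $g$-stable, a splitting could use a different complement to $\langle\bar p\rangle$.

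To finish, you need an input that sees the $p$-coefficient: either Nave's actual coordinates on p.~497 (which produce elements $\epsilon_1,\dots,\epsilon_p$ spanning $\mathfrak{m}/\mathfrak{m}^2$, permuted by $C_p$ and satisfying $\sum\epsilon_i=0$, with $\bar p$ identified inside this reduced regular representation -- this is what the paper's later assertion that ``the invariant class $v$ is represented by the image of $p$'' depends on), or a computation of $g_*(u_1)$ modulo $\mathfrak{m}^2$ but \emph{not} modulo $p$ showing the coefficient of $p$ is a unit, or an indirect argument (e.g.\ $H^0(C_p,\mathfrak{m}/\mathfrak{m}^2)$ is one-dimensional by the Hopkins--Miller cohomology computation, whereas $J_{n-1}\oplus\mathbf{1}$ has two-dimensional invariants). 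Any of these would complete the proof; without one of them the indecomposability, which is the whole content of the statement, is not established.
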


Recall that the \emph{reduced regular representation} $\overline{\rho}$ of $C_p$ over a field $k$
 is the quotient of the regular representation $k[C_p]$ by
the trivial subrepresentation. 
As a corollary, we find that the associated graded of $(E_n)_0$, as a
$C_p$-representation and algebra, is the symmetric algebra
\[ \mathrm{gr}  (E_n)_0 \simeq \sym^* \overline{\rho}  \]
of the reduced regular representation $\overline{\rho}$. 

Over a field $k$ of characteristic $p$, 
the decomposition of the symmetric powers
of $\overline{\rho}$ as $C_p$-representations up to free summands is
calculated in work of Almkvist and Fossum (and was done by
Hopkins--Miller in \cite[Lem.
7.2.3]{hopkins_miller}).
We will let $\mathbf{1}$ denote the trivial representation.

\begin{prop}[{Almkvist--Fossum \cite[Ch.~3, Props.~3.4--3.6]{AF78}}] 
The $C_p$-representation
$\sym^i \overline{\rho}$ decomposes as follows: 
\begin{enumerate}
\item  If $i \equiv 0 \mod p$,
then $\sym^i \overline{\rho} \simeq \mathbf{1}  \oplus \mathrm{free}$.
\item If 
$i \equiv 1 \mod p$, then $\sym^{i} \overline{\rho} \simeq \overline{\rho}
\oplus \mathrm{free}$.
\item 
If $i \not \equiv 0, 1 \mod p$, then $\sym^i \overline{\rho}$ is a free
$k[C_p]$-module.
\end{enumerate}
\end{prop}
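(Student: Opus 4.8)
The plan is to deduce the decomposition from the (much easier) corresponding statement for the regular representation, by means of a short exact sequence and Tate cohomology. Since $k$ has characteristic $p$, we have $k[C_p]\cong k[y]/(y^p)$, a chosen generator of $C_p$ acting as multiplication by $1+y$; hence every finite-dimensional $k[C_p]$-module is, uniquely up to isomorphism, a direct sum of the indecomposables $J_a:=k[y]/(y^a)$, $1\le a\le p$, where $J_1=\mathbf 1$ and $J_p=k[C_p]$ is the free (equivalently projective, equivalently injective) module. As $\overline{\rho}$ is, by definition, the quotient of $k[C_p]$ by its socle $k\cdot z$ — where $z=\sum_{g\in C_p}g$ is the norm element — we have $\overline{\rho}\cong J_{p-1}$. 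Two elementary facts about $k[C_p]$-modules will do all the work: $M$ is free if and only if its Tate cohomology $\widehat H^{\ast}(C_p;M)$ vanishes, and for $1\le a\le p-1$ one checks directly (using $N=L^{p-1}$ for $L$ the nilpotent $\sigma-1$) that $\widehat H^{n}(C_p;J_a)\cong k$ for every $n$; consequently the number of non-free indecomposable summands of $M$ equals $\dim_k\widehat H^{0}(C_p;M)$.

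The first step is to understand $\sym^{j}(k[C_p])$. Writing $k[C_p]=\bigoplus_{g\in C_p}k\cdot g$, the group $C_p$ permutes this basis, so $\sym^{\ast}(k[C_p])=k[v_g:g\in C_p]$ with $C_p$ cyclically permuting the variables; thus $\sym^{j}(k[C_p])$ is the permutation module on the set of degree-$j$ monomials. Each $C_p$-orbit of monomials is either free (of size $p$) or a single fixed point, and a fixed monomial — one all of whose exponents agree — exists precisely when $p\mid j$, in which case it is unique. Hence $\widehat H^{n}(C_p;\sym^{j}(k[C_p]))\cong k$ when $p\mid j$, and is $0$ otherwise.

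Next, $z=\sum_{g}g$ is a nonzero linear form in $\sym^{\ast}(k[C_p])$ generating the trivial submodule, so multiplication by $z$ is an injective, $C_p$-equivariant map $\sym^{i-1}(k[C_p])\to\sym^{i}(k[C_p])$ with cokernel $\sym^{i}(k[C_p])/(z)\cong\sym^{i}(k[C_p]/kz)=\sym^{i}(\overline{\rho})$. This gives a short exact sequence of $k[C_p]$-modules
\[
0\longrightarrow \sym^{i-1}(k[C_p])\longrightarrow \sym^{i}(k[C_p])\longrightarrow \sym^{i}(\overline{\rho})\longrightarrow 0,
\]
to which I would apply the long exact Tate cohomology sequence (which is $2$-periodic for $C_p$). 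Since $i-1$ and $i$ cannot both be divisible by $p$, the computation of the preceding paragraph forces $\widehat H^{\ast}(C_p;\sym^{i}\overline{\rho})=0$ when $p\nmid i(i-1)$, so that $\sym^{i}\overline{\rho}$ is free — this is case~(3) — and it forces $\dim_k\widehat H^{\ast}(C_p;\sym^{i}\overline{\rho})=1$ when $p\mid i$ or $p\mid i-1$, so that $\sym^{i}\overline{\rho}$ then has exactly one non-free indecomposable summand $J_a$ with $1\le a\le p-1$.

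It remains to identify this $J_a$. When $p\mid i$, the left-hand term $\sym^{i-1}(k[C_p])$ is free, hence injective, so the sequence splits: $\sym^{i}(k[C_p])\cong\sym^{i-1}(k[C_p])\oplus\sym^{i}(\overline{\rho})$. Comparing with $\sym^{i}(k[C_p])\cong\mathbf 1\oplus(\text{free})$ from the permutation-module analysis and invoking Krull--Schmidt yields $\sym^{i}(\overline{\rho})\cong\mathbf 1\oplus(\text{free})$, which is case~(1). When $p\mid i-1$, I would instead quotient by the free part $P$ of $\sym^{i-1}(k[C_p])$ (a direct summand, hence $\sym^{i}(k[C_p])/P$ is again free), reducing to a short exact sequence $0\to\mathbf 1\to F\to\sym^{i}(\overline{\rho})\to 0$ with $F$ free; after moving the trivial submodule into the socle of a single free summand of $F$, the quotient is visibly $k[C_p]/kz\oplus(\text{free})=\overline{\rho}\oplus(\text{free})$, which is case~(2). (Alternatively one may pin down $a$ numerically: the lone non-free summand has dimension congruent mod $p$ to $\dim_k\sym^{i}\overline{\rho}=\binom{i+p-2}{p-2}$, which by Lucas' theorem is $1$ if $p\mid i$ and $p-1$ if $p\mid i-1$.) I do not expect a serious obstacle; the two points that require care are the $C_p$-equivariant identification of the cokernel of multiplication by $z$ with $\sym^{i}(\overline{\rho})$, and keeping track of the orbit counts through the Tate long exact sequence.
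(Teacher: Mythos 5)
Your proof is correct. Note that the paper does not prove this proposition at all: it is quoted from Almkvist--Fossum, whose own derivation proceeds through a general structure theory for the representation ring of a cyclic $p$-group modulo projectives (in particular the periodicity $\sym^{i+p}V\equiv\sym^{i}V$ modulo free summands), so your argument is a genuinely different, self-contained route. The two pillars of your proof both check out: the equivariant Koszul-type sequence $0\to\sym^{i-1}(k[C_p])\xrightarrow{\cdot z}\sym^{i}(k[C_p])\to\sym^{i}(\overline{\rho})\to 0$ is exact because $z$ is an invariant nonzerodivisor and quotienting a polynomial algebra by a linear form gives the symmetric algebra on the quotient space; and the Tate cohomology bookkeeping is right, since $N=(\sigma-1)^{p-1}$ in $k[C_p]$ gives $\widehat H^{n}(C_p;J_a)\cong k$ for all $n$ and all $1\le a\le p-1$, while the permutation-basis analysis of $\sym^{j}(k[C_p])$ (orbits of size $p$ except for the single fixed monomial when $p\mid j$) pins down the Tate cohomology of the outer terms. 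The identification of the lone non-free summand in cases (1) and (2) via splitting off injectives and moving the trivial submodule into the socle of one free summand (or, alternatively, via the dimension count $\binom{i+p-2}{p-2}\bmod p$) is also sound. What your approach buys is elementarity and transparency --- everything reduces to the classification of $k[C_p]$-modules and a two-term orbit count --- at the cost of being special to $C_p$, whereas the Almkvist--Fossum machinery handles $C_{p^n}$ and exterior powers uniformly.
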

The invariant element of $\sym^p \overline{\rho}$ that generates the
$\mathbf{1}$ summand is given as follows. The representation $\overline{\rho}$
is generated by elements $\epsilon_1, \dots, \epsilon_p$ with $\epsilon_1 + \dots + \epsilon_p =
0$ and with $C_p$ acting on the $\epsilon_i$ via the natural permutation action. 
Then the (norm) product $\epsilon_1 \dots \epsilon_p \in \sym^p \overline{\rho}$ is $C_p$-invariant and
generates the desired summand. 
The analogous invariant elements  in $\sym^{pk} \overline{\rho}$ are the powers
of this norm product.

We now describe the cohomology of the associated graded of $(E_n)_0$. 
Recall that $$H^*(C_p, \mathbb{F}_{p^n} ) \simeq \mathbb{F}_{p^n}[a, b]/a^2,
\quad |a| = 1, |b|  = 2,$$
is the tensor product on an exterior algebra on a class in degree one and a
polynomial algebra on a class in degree two, both over $\F_{p^n}$. 
Recall also that the cohomology of $\overline{\rho}$ as a graded module is given by 
\[ H^*( C_p, \overline{\rho}) \simeq H^*(C_p,
\mathbb{F}_{p^n})\left\{u_0, u_1\right\}/ ( a u_0 = 0, b u_0 =
a u_1).   \]

As a result, 
we can completely determine the cohomology modulo transfers (which only appear
in $H^0$).

\begin{cor} 
The cohomology $H^*(C_p, \mathrm{gr}(E_n)_0)$ modulo transfers is given
by 
\[ H^*(C_p, \mathbb{F}_{p^n}) [u, v, v']/( v^2 = v'^2 = 0,  a v =
0, b v = a v').  \]
The internal grading of $u$ is $p$ and that of $v, v'$ is $1$. Moreover, $u, v$
are in cohomological degree zero and $v'$ is in $H^1$.
\end{cor}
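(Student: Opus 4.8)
The plan is to assemble this statement from the two preceding propositions (the identification $\mathfrak m/\mathfrak m^2\cong\overline\rho$ and the Almkvist--Fossum decomposition of $\sym^i\overline\rho$) together with one structural input: a free $\F_{p^n}[C_p]$-module $F$ — equivalently a module induced from the trivial subgroup — has $H^{>0}(C_p,F)=0$, while $H^0(C_p,F)=F^{C_p}$ is exactly the image of the transfer (the norm $F\to F^{C_p}$), so $H^*(C_p,F)$ vanishes modulo transfers. First I would apply this to $\mathrm{gr}(E_n)_0=\bigoplus_{i\ge0}\sym^i\overline\rho$ and discard the free Almkvist--Fossum summands: only the summands with $i\equiv0$ and $i\equiv1\pmod p$ survive, and for $p$ odd these are the only residues that occur, so, writing $\langle j\rangle$ for placement in internal degree $j$,
\[
H^*(C_p,\mathrm{gr}(E_n)_0)\big/\text{transfers}\;\cong\;\bigoplus_{k\ge0}H^*(C_p,\mathbf 1)\langle pk\rangle\ \oplus\ \bigoplus_{k\ge0}H^*(C_p,\overline\rho)\langle pk+1\rangle,
\]
the first family coming from the $\mathbf 1$-summand of $\sym^{pk}\overline\rho$ and the second from the $\overline\rho$-summand of $\sym^{pk+1}\overline\rho$; here $H^*(C_p,\mathbf 1)=\F_{p^n}[a,b]/a^2$ and $H^*(C_p,\overline\rho)=\F_{p^n}[a,b]/a^2\{u_0,u_1\}/(au_0=0,\ bu_0=au_1)$ as recalled.

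Next I would read off the ring structure using the explicit invariant vectors described just before the statement. I set $u$ equal to the class of the norm product $\epsilon_1\cdots\epsilon_p$ in $H^0(C_p,\sym^p\overline\rho)$. Since $\sym^*\overline\rho$ is a polynomial ring, hence a domain, its powers $(\epsilon_1\cdots\epsilon_p)^k$ are nonzero and represent the generators of the $\mathbf 1$-summands, so $\bigoplus_k H^*(C_p,\mathbf 1)\langle pk\rangle$ is the polynomial algebra $\big(\F_{p^n}[a,b]/a^2\big)[u]$. For the $\overline\rho$-part, the submodule $\overline\rho\cdot(\epsilon_1\cdots\epsilon_p)^k=\langle\epsilon_1(\epsilon_1\cdots\epsilon_p)^k,\dots,\epsilon_p(\epsilon_1\cdots\epsilon_p)^k\rangle\subseteq\sym^{pk+1}\overline\rho$ is a copy of $\overline\rho$ (injectivity of multiplication by the nonzero element $(\epsilon_1\cdots\epsilon_p)^k$), and it is the non-free summand of $\sym^{pk+1}\overline\rho$; so, putting $v=u_0$ and $v'=u_1$ (the case $k=0$), multiplication by $u$ carries $v,v'$ to generators of $H^*(C_p,\overline\rho)\langle pk+1\rangle$, and the total $\overline\rho$-contribution is the free $\big(\F_{p^n}[a,b]/a^2\big)[u]$-module on $v,v'$ with the relations $av=0,\ bv=av'$ inherited from $H^*(C_p,\overline\rho)$. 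Finally every pairwise product of $v$ and $v'$ lies in $H^*(C_p,\sym^i\overline\rho)$ with $i\equiv2\pmod p$, where $\sym^i\overline\rho$ is free for $p$ odd (as $2\not\equiv0,1$), so $v^2=vv'=v'^2=0$. Collecting the relations yields the presentation in the statement, with the asserted bidegrees $|u|=(0,p)$, $|v|=(0,1)$, $|v'|=(1,1)$.

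The step I expect to be the main obstacle is the assertion that $\overline\rho\cdot(\epsilon_1\cdots\epsilon_p)^k$ is \emph{precisely} the non-free summand of $\sym^{pk+1}\overline\rho$ — equivalently, that multiplication by $u$ is an isomorphism, not merely an injection, on the non-free cohomology in these degrees (if it were, say, zero in positive $k$, the presentation would be different). Unwinding, the composite $\overline\rho\xrightarrow{\ \cdot(\epsilon_1\cdots\epsilon_p)^k\ }\sym^{pk+1}\overline\rho\twoheadrightarrow\overline\rho$ (projection onto the non-free summand) is an endomorphism of $\overline\rho$, and $\mathrm{End}_{\F_{p^n}[C_p]}(\overline\rho)$ is local, so it is an isomorphism as soon as it is nonzero; moreover a nilpotent endomorphism of $\overline\rho$ kills the one-dimensional socle $\overline\rho^{C_p}$, so it suffices to check that $w\cdot(\epsilon_1\cdots\epsilon_p)^k$ — with $w=\sum_i(i-1)\epsilon_i$ a generator of $\overline\rho^{C_p}$ — is not in the image of the transfer on $\sym^{pk+1}\overline\rho$. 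This last nonvanishing is the real content of the proof; it can be verified by a direct computation, or extracted from the Almkvist--Fossum description of the $\overline\rho$-isotypic component, after which the corollary follows formally.
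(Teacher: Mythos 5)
Your assembly matches the paper's (unwritten) argument: the corollary is stated there as an immediate consequence of the identification $\mathfrak{m}/\mathfrak{m}^2\simeq\overline{\rho}$ and the Almkvist--Fossum decomposition, with the free summands contributing only transfers, and your choices $u=[\epsilon_1\cdots\epsilon_p]$, $v=u_0$, $v'=u_1$ are exactly what is intended. The one point you single out --- that multiplication by $(\epsilon_1\cdots\epsilon_p)^k$ carries the non-free part isomorphically onto that of $\sym^{pk+1}\overline{\rho}$, reduced via locality of $\End_{\F_{p^n}[C_p]}(\overline{\rho})$ to the nonvanishing of $w\cdot(\epsilon_1\cdots\epsilon_p)^k$ modulo transfers --- is correctly identified and does hold; the paper simply takes this multiplicative statement for granted.
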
 

We display in \Cref{addspectralsequenceEN} the $E_1$-page of the spectral 
sequence
\begin{equation} \label{algss} H^i( C_p, \mathrm{gr}^j((E_n)_0)) \implies
H^{i}( C_p , (E_n)_0)  \end{equation}
when $p=3$.

We note that none of the transfers support 
differentials in the spectral sequence thanks to the norm map. Moreover, this
holds in the topological spectral sequence for the same reason. 

It remains to determine the differentials. 
We do this by reverse-engineering the output of the Hopkins--Miller calculation. 
\begin{prop} \label{calcofdiffs}
The spectral sequence \eqref{algss} collapses at the $p$-th page. The differentials are
(up to nonzero scalars) determined by $d_1(a) \dequal b v$ and $d_{p-1}( v')
\dequal b u$. The classes $u, b, v$ are permanent cycles.
\end{prop}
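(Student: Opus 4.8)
The plan is to pin down the two stated differentials by playing the $E_1$-page against the already known value of the target. Restricting the Hopkins--Miller computation of \Cref{prop:e2cals} to internal (topological) degree zero identifies $H^*(C_p,(E_n)_0)$: modulo the image of the transfer it is the polynomial ring $\F_{p^n}[\beta\delta^{-n}]$ on a class of cohomological degree $2$, so that $H^{2j}(C_p,(E_n)_0)$ is one-dimensional modulo transfers for every $j\ge 0$ while $H^{\mathrm{odd}}(C_p,(E_n)_0)=0$. Since transfers land in $H^0$ and -- as already observed via the norm -- are permanent cycles that never support differentials, it suffices to analyze \eqref{algss} modulo transfers, where the $E_1$-page is the ring of the preceding corollary. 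The second structural input, used throughout, is that $\sym^i\overline{\rho}$ is a free $\F_{p^n}[C_p]$-module, hence acyclic in positive cohomological degrees, whenever $i\not\equiv 0,1\pmod p$; this confines all of $E_1$ in positive cohomological degree to internal degrees $\equiv 0,1\pmod p$, kills $d_r$ for $2\le r\le p-2$ on those entries, and so already yields $E_2=E_{p-1}$.

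First I would exhibit $u$, $v$, $b$ as permanent cycles. The class $u$ is the leading term of the multiplicative norm $\prod_{g\in C_p} g(\tilde\epsilon)$ of any lift $\tilde\epsilon\in\mathfrak{m}$ of the generator $\epsilon_1\in\mathfrak{m}/\mathfrak{m}^2=\overline{\rho}$, hence of a genuine $C_p$-invariant element of $(E_n)_0$. The class $v$ generates the one-dimensional space $\overline{\rho}^{C_p}$; since $C_p$ acts trivially on $\W(\F_{p^n})$ the element $p$ is invariant, and it lies in $\mathfrak{m}\setminus\mathfrak{m}^2$ because $p$ is part of a regular system of parameters, so $v\dequal\bar p$ is the leading term of the invariant element $p$. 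Finally, the $C_p$-equivariant structure map $\Z_p\to(E_n)_0$ (trivial action, with $p$-adic filtration equal to the restriction of the $\mathfrak{m}$-adic one) induces a map of filtration spectral sequences carrying the filtration-zero permanent cycle $b\in H^2(C_p,\Z_p)\cong\Z/p$ to our class $b$, which is thus a permanent cycle as well; its nonzero reduction modulo $\mathfrak{m}$ shows it is not itself killed.

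Now the two differentials are forced. Because $b$ survives and $H^2(C_p,(E_n)_0)$ is one-dimensional modulo transfers, the class $bv\in E_1^{2,1}$ must die, and its only possible source is $d_1$ on $E_1^{1,0}=H^1(C_p,\F_{p^n})=\F_{p^n}a$, so $d_1(a)\dequal bv$. (Via the comparison above this is also the image of the mod-$p$ Bockstein relation $\beta(a)\dequal b$ applied to the extension $0\to\Z/p\to\Z/p^2\to\Z/p\to 0$ that computes $d_1$ on the $\Z_p$-side.) By multiplicativity $d_1$ is determined, and one finds $E_2=E_{p-1}$ with $H^1$ modulo transfers equal to the line $\F_{p^n}\{v'u^k\}_{k\ge 0}$. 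Since $H^1(C_p,(E_n)_0)=0$ these classes must all die; they cannot be hit, as everything in cohomological degree zero on $E_{p-1}$ is a permanent cycle (it is spanned by products of $u$, $v$ and by transfers), so each supports a differential, and by the freeness input the first possible target of $v'$ is $E_{p-1}^{2,p}=H^2(C_p,\sym^p\overline{\rho})=\F_{p^n}\,bu$, forcing $d_{p-1}(v')\dequal bu$. Multiplicativity again determines $d_{p-1}$; after it, every surviving class is a product of the permanent cycles $u,v,b$ together with transfers, so there is no room for a further differential. Hence \eqref{algss} collapses at $E_p$, and a comparison of Poincar\'e series shows the resulting $E_\infty$-page agrees with $\mathrm{gr}\,H^*(C_p,(E_n)_0)$.

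The main obstacle is the differential $d_{p-1}(v')\dequal bu$: it is the one differential not accessible by a direct comparison argument (the auxiliary $\Z_p$-spectral sequence carries no $d_{p-1}$), and we obtain it only by reverse-engineering from the Hopkins--Miller value of $H^*(C_p,(E_n)_0)$. Evaluating this higher connecting homomorphism directly, as a secondary operation, is possible but unnecessary here.
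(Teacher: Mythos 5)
Your overall strategy is the same as the paper's: exhibit $u$ (multiplicative norm), $v$ (leading term of the invariant element $p$), and $b$ (pulled back from trivial coefficients) as permanent cycles, and then force the two differentials by comparison with the Hopkins--Miller value of $H^*(C_p,(E_n)_0)$, which in internal degree zero is $\F_{p^n}[\beta\delta^{-n}]$ modulo transfers. Your derivation of $d_1(a)\dequal bv$ by counting in $H^2$ (rather than the paper's naturality argument along $\W(\F_{p^n})\to(E_n)_0$, which you mention only parenthetically) is fine, since $bv$ is a product of permanent cycles, hence cannot support a differential, and its only possible source is $E_1^{1,0}=\F_{p^n}a$ via $d_1$.

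The one step that does not follow as written is the last one: from ``$v'$ must support a differential, and its first possible target is $E_{p-1}^{2,p}=\F_{p^n}\,bu$'' you cannot conclude $d_{p-1}(v')\dequal bu$. A priori $v'$ could instead support a longer differential --- for instance $d_{2p-1}$ hitting $bu^2$, which survives to that page precisely in the scenario $d_{p-1}(v')=0$, by multiplicativity --- so ``first possible'' does not mean ``actual.'' The deduction should be run from the target, which is how the paper argues: $bu$ is a product of permanent cycles, so it supports no differential, and it must nevertheless die because $H^2(C_p,(E_n)_0)$ has cardinality $p^n$ modulo transfers and is already accounted for by the surviving class $b$ in filtration zero; after $d_1$ the only group in cohomological degree one that can ever map to internal degree $p$ is $E^{1,1}=\F_{p^n}v'$ via $d_{p-1}$, so that differential must be onto, hence an isomorphism. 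With that one sentence repaired your argument is complete and coincides with the paper's.
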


\begin{proof} 
We start by describing the permanent cycles.
First, the class $u$ in degree $p$ is constructed as a multiplicative norm. This
construction can be carried out in $(E_n)_0$, so $u$ is necessarily a
permanent cycle. The class $b$ comes from $H^*(C_p, \mathbb{Z})$ and is
also therefore a permanent cycle. The invariant class $v$ in $H^0(C_p,
\overline{\rho})$ is represented by the image of $p$ in
$\mathfrak{m}/\mathfrak{m}^2$. Since this is invariant in $(E_n)_0$, it is
also a permanent cycle.   

Next, we describe the first differential $d_1$. We claim that, up to unit scalars, $d_1( a) = b v = a v'$. 
One sees this by naturality with the filtration (by powers of the maximal
ideal) on $\W( \mathbb{F}_{p^n})$. We have a $C_p$-equivariant, filtration-preserving map
\( \W(\mathbb{F}_{p^n}) \to (E_n)_0,  \)
where the domain has trivial action. 
One easily computes the $d_1$ in the spectral sequence for $\W(\F_{p^n})$ (as a
Bockstein), and uses the fact that the unique nontrivial map of
$C_p$-representations $\mathbf{1} \to \overline{\rho}$ induces an
isomorphism on $H^2$. By naturality, this determines the $d_1$ in our spectral
sequence. 

Finally, we claim that $d_{p-1}(v') \dequal b u$ as desired. We
know that this differential must happen, because otherwise $H^2(  C_p,
(E_n)_0)$ would  have cardinality at least $p^{2n}$, whereas it has
cardinality $p^n$ by the Hopkins--Miller calculations 
(\Cref{prop:e2cals}). 
\end{proof} 
\begin{figure}[!hbt]

\begin{centering}
\includegraphics{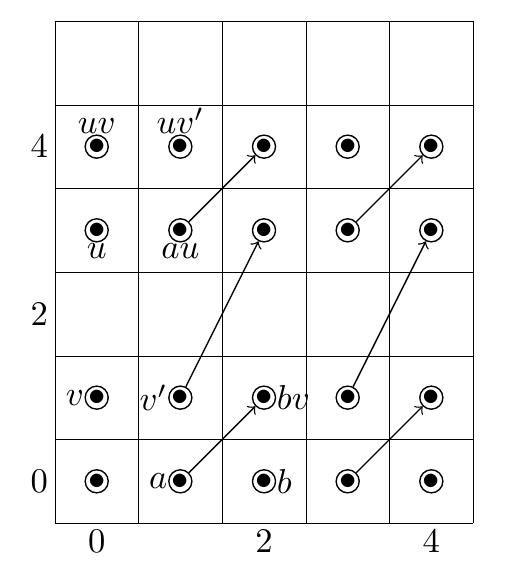}
\end{centering}
\caption{The spectral sequence for $H^*(C_p, (E_n)_0)$ for $p = 3$}
\label{addspectralsequenceEN}
\end{figure}

\subsection{The multiplicative spectral sequence}

We now describe the modifications to the above spectral sequence that occur for
the multiplicative case, and determine the algebraic Picard group for the group
$C_p$. 

The following is our main result. As before, we let $\mathfrak{m} \subset
(E_n)_0$ be the maximal ideal, with the induced $C_p$-action, and let $\omega$
denote $\pi_2E_n$ as an (invertible) equivariant $(E_n)_0$-module.

\begin{prop} \label{prop:algebraicterm}
$H^1(C_p, 1 + \mathfrak{m}) \simeq H^1(C_p, (E_n)_0^{\times}) \simeq
\mathbb{Z}/p$, generated by the class of $\omega$. 
\end{prop}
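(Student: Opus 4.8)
The plan is to run the spectral sequence \eqref{algss} in its multiplicative form \eqref{multss}, using the comparison results from \Cref{algTruncLogsec} to transport as many differentials as possible from the additive computation in \Cref{calcofdiffs}. First I would record the relevant input: by \Cref{prop:sylowreduction}'s proof (or directly from the structure of even-periodic ring spectra), $H^1(C_p,(E_n)_0^\times)$ splits as the Teichmüller part $H^1(C_p,\mathbb{F}_{p^n}^\times)$ together with the contribution of $H^1(C_p,1+\mathfrak{m})$; but since $C_p$ acts trivially on the residue field $\mathbb{F}_{p^n}$ here (the $C_p$ we chose lies in $\S$ and reduces to the identity on $\mathbb{F}_{p^n}$), we have $H^1(C_p,\mathbb{F}_{p^n}^\times)=\hom(C_p,\mathbb{F}_{p^n}^\times)=0$, so indeed $H^1(C_p,(E_n)_0^\times)\simeq H^1(C_p,1+\mathfrak{m})$. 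It then remains to compute the latter from the $\mathfrak{m}$-adic filtration.

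Next I would compare the differentials. By \Cref{comparediffs}, in the truncated range $t\in[k,pk-1]$ the multiplicative and additive spectral sequences are identified via $\exp_p$, and by the stable-range comparison (and \Cref{firstalgdiff}) the differentials $d_r$ and $d_{r,\times}$ agree for $r<p-1$, so in particular $d_{1,\times}(a)\dequal bv$ just as additively. The only place where the two spectral sequences can differ is the first unstable differential $d_{p-1,\times}$, where \Cref{betaP0phi} gives $d_{p-1,\times}=d_{p-1}-\beta\mathcal{P}^0$. The additive differential $d_{p-1}(v')\dequal bu$ is nonzero; the key point is that the correction term $\beta\mathcal{P}^0$ is Frobenius-semilinear rather than linear. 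Concretely, on the $\mathbb{F}_{p^n}$-line generated by $v'$, the additive $d_{p-1}$ is an $\mathbb{F}_{p^n}$-linear isomorphism onto $\mathbb{F}_{p^n}\{bu\}$, while $\beta\mathcal{P}^0$ is additive but $\phi$-semilinear; so $d_{p-1,\times}$ sends $av'\mapsto (\text{linear})(a) - (\text{semilinear})(a)$, which is of the form $a\mapsto \lambda a - \mu a^p$ for $\lambda\neq 0$ in $\mathbb{F}_{p^n}$. Such a map has kernel of size at most $p$ (an additive polynomial of degree $p$), so the kernel of $d_{p-1,\times}$ on this line contributes at most $\mathbb{Z}/p$ to $H^1$.

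Then I would assemble the $E_\infty$-page in cohomological degree $1$. After $d_{1,\times}$ the classes $a$ and $bv$ are removed; the surviving potential contributors to $H^1(C_p,1+\mathfrak{m})$ in filtration $\ge 2$ are exactly the $v'$-line (in internal degree $\equiv 1 \bmod p$, which by the Almkvist--Fossum decomposition is where $\overline{\rho}$-summands sit), and the argument above shows their total surviving contribution is at most $\mathbb{Z}/p$. For the lower bound, I would exhibit an explicit nontrivial class: the module $\omega=\pi_2 E_n$ defines a class in $H^1(C_p,(E_n)_0^\times)$ (equivalently, it is an equivariant invertible $(E_n)_0$-module which need not be equivariantly trivial), and one checks it is detected in positive filtration — indeed in filtration $p-1$ by the class $v'$ — because the $C_p$-action on the Lie algebra $\mathfrak{m}/\mathfrak{m}^2$-adically twists $u$ in a way that is nontrivial precisely at the first unstable level. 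This shows $\mathbb{Z}/p$ injects, so $H^1(C_p,1+\mathfrak{m})\simeq\mathbb{Z}/p$ generated by $\omega$, as claimed.

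\textbf{Main obstacle.} The delicate point is the analysis of $d_{p-1,\times}$ on the $v'$-line: one must know that the semilinear correction $\beta\mathcal{P}^0$ is genuinely present (so that the additive differential is perturbed and the upper bound becomes $\mathbb{Z}/p$ rather than $0$) \emph{and} that it does not accidentally cancel the linear term (so that the kernel is not all of $\mathbb{F}_{p^n}$). \Cref{betaP0phi} guarantees the correction term is $-\beta\mathcal{P}^0$, so the map on the $v'$-line is $a\mapsto \lambda a - \mu a^p$ with $\lambda\neq0$; the worst case is $\mu=0$, which would still only give kernel $0$, not something larger, so the upper bound $\mathbb{Z}/p$ is safe regardless. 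Pinning down that $\omega$ realizes a \emph{nonzero} such class (rather than landing in the kernel) is then the remaining work, and is handled by the explicit description of the $C_p$-action on $(E_n)_*$ recalled in \Cref{sec:finsubgroups}.
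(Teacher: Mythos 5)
Your upper-bound argument is essentially the paper's own proof: filter $1+\mathfrak{m}$ by powers of $\mathfrak{m}$, import the additive differentials of \Cref{calcofdiffs} in filtration $\geq 2$ via \Cref{comparediffs}, and on the $v'$-line use \Cref{betaP0phi} to get $d_{p-1,\times}(ev') = (e\xi - e^p\xi')\, bu$ with $\xi\neq 0$, whose kernel has at most $p$ elements; your preliminary reduction $H^1(C_p,(E_n)_0^\times)\simeq H^1(C_p,1+\mathfrak{m})$ via triviality of the $C_p$-action on the residue field is also correct and is left implicit in the paper. The one place you genuinely diverge is the lower bound. The paper does not trace the cocycle $g\mapsto g_*(u)/u$ through the filtration at all; it simply observes, from the Hopkins--Miller computation (\Cref{prop:e2cals}), that $H^*(C_p,\omega^{\otimes j})\neq H^*(C_p,(E_n)_0)$ for $0<j<p$ (the cohomology of $E_{2j}$ depends on $j$ modulo $p$ because $\delta$ has internal degree $2p$), so $\omega$ is a nontrivial class of order $p$, hence a generator once the upper bound is in hand. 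Your version instead asserts that $\omega$ is ``detected in filtration $p-1$ by $v'$'' without verifying that the leading term of $g_*(u)/u - 1$ in $\mathfrak{m}/\mathfrak{m}^2$ represents a nonzero \emph{cohomology} class (a nonzero cocycle could still be a coboundary), and the bookkeeping is off: $v'$ lives in $H^1(C_p,\mathfrak{m}/\mathfrak{m}^2)$, i.e.\ $\mathfrak{m}$-adic filtration $1$, not $p-1$. That step is completable, but as written it is the weak link; the paper's comparison of $H^*(C_p,\omega^{\otimes j})$ with $H^*(C_p,(E_n)_0)$ is the cleaner and more robust route to nontriviality.
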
 

\begin{proof} 
We use the $\mathfrak{m}$-adic filtration on $1 + \mathfrak{m}$, which leads to
a spectral sequence whose $E_1$-page is identified with \eqref{algss} with the
bottom row removed, converging to $H^*(C_p, 1 + \mathfrak{m})$. We will denote
the differentials by $d_{k, \times}$.

In the \emph{algebraic} spectral sequence, we saw in 
\Cref{calcofdiffs} that 
everything  that can contribute to the
cohomological degree $1$ abutment supports either a
$d_1$ or a $d_{p-1}$. In filtration $\geq 2$, we can import these differentials
thanks to the truncated exponential and logarithm (\Cref{comparediffs}). 
However, we \emph{cannot} import the formula
\[ d_{p-1}(v') \dequal b u,  \]
into the multiplicative spectral sequence. Instead, we use the formula of
\Cref{betaP0phi},  
which gives the differential in the multiplicative spectral sequence. 
Suppose $d_{p-1}(v') = \xi b u, \beta P^0( v') = \xi' b u$ for some
$\xi, \xi' \in \mathbb{F}_{p^n}$.
For any
$e \in \mathbb{F}_{p^n}$, one gets: 
\begin{align*} d_{p-1, \times}( ev') & = d_{p-1}(ev') - \beta P^0(e v') \\
& = e \xi b u  - e^p \beta P^0(v') \\
& = e \xi b u - e^p \xi' b u = (e \xi - e^p \xi') b u.
\end{align*}
Now we know that $\xi \neq 0$, so regardless of the value of $\xi' $, there will be
\emph{at most} $p$ choices of $e$ that make the above vanish. 
This gives an upper bound of $\mathbb{Z}/p$ for the $H^1$.

However, the Hopkins--Miller calculations (\Cref{prop:e2cals}) also give a lower bound of 
$\mathbb{Z}/p$ for the Picard group: in fact, it shows that $\omega  = \pi_2 E_n$ has order
$p$ in the algebraic Picard group as $H^*(C_p, \omega^{\otimes j})
\neq H^*( C_p, (E_n)_0)$ for $j < p$. This completes the proof. 
\end{proof} 

We emphasize that, in the above proof, the additive differentials were
$\mathbb{F}_{p^n}$-linear, but the multiplicative differentials were
\emph{not}. This enables us to reduce  a group that was a priori
$\mathbb{F}_{p^n}$ to a $\mathbb{Z}/p$.
Also, one can directly show that $\xi' \neq 0$, although we do not need to do
so here. 
The fact that the difference between the additive and
multiplicative differentials is Frobenius-semilinear is all that was used, not
any of the explicit formulas. 
\section{The topological comparison of differentials}

\subsection{Context}
We begin by reviewing the context from \cite[Sec.~5 and 6]{mathew_stojanoska_picard}. 
Let $R^\bullet$ be a cosimplicial $\einfty$-ring.
In this case, there is a Bousfield--Kan spectral sequence
\begin{equation} \label{generalTotss} \pi^s \pi_t R^\bullet \implies \pi_{t-s}
\mathrm{Tot} R^\bullet.  \end{equation}
Applying the functor $\gl_1$, one also obtains a Bousfield--Kan spectral sequence
\begin{equation} \label{generalPicss} \pi^s \pi_t \gl_1( R^\bullet) \implies \pi_{t-s} \mathrm{Tot}(\gl_1(
R^\bullet)). 
\end{equation} 
The latter spectral sequence is relevant to Picard group
computations via descent theory, 
but the former spectral sequence \eqref{generalTotss} is usually better understood (e.g., because of
the multiplicative structure). 

For $t - s> 0$, these homotopy groups are determined by the Bousfield--Kan
spectral sequences for the \emph{spaces} $\Omega^\infty R^\bullet$ and
$\Omega^\infty\gl_1( R^\bullet)$, which have isomorphic connected components. 
As a result, the spectral sequences \eqref{generalTotss} and \eqref{generalPicss} can be compared in
this range.
However, it is the $t-s  = -1$ component of \eqref{generalPicss} that is
relevant to the Picard group calculations. Although at $E_2$ both spectral
sequences (except for $ s = 0, 1$) are the same in this column, the differentials
vary. 
Nonetheless, in \cite{mathew_stojanoska_picard}, the second and third authors showed that there is a
\emph{stable range} in which differentials can be compared, even for $t - s =
-1$, and the first differential beyond the stable range can be computed. 
In particular, one observes that in any range $[n, 2n-1]$, there is a functorial
equivalence of \emph{spectra}
\begin{equation} \label{squarezero} \tau_{[n, 2n-1]} R \simeq \tau_{[n, 2n-1]} \gl_1(R),
\end{equation} 
for an $\einfty$-ring $R$. This gives the stable range, and the first
differential beyond this range can be computed via reduction to a universal
example.

In this section, we will prove that after arithmetic localization 
there is a larger range in which one can compare differentials in the two
spectral sequences, analogous to the range that one obtains in algebra using
the truncated logarithm (cf.~section~\ref{algTruncLogsec}). It is 
possible to prove an analog of \eqref{squarezero} in this context (and
construct a ``spectral'' truncated logarithm); this
will be explored in forthcoming work \cite{CHMS}.

Let $R^\bullet$ be a cosimplicial $\einfty$-ring whose members are all
$\mathbb{Z}[1/(p-1)!]$-local. 
Our main goal is to demonstrate the following:  

\begin{theorem} 
\label{topunivformula}
Suppose $R^\bullet$ is a cosimplicial 
$\mathbb{Z}[1/(p-1)!]$-local
$\einfty$-ring. 
Denote by $E_*^{s,t}$ and $E_{*, \times}^{s,t}$ the associated spectral sequences 
\eqref{generalTotss} and \eqref{generalPicss} above. Suppose given $t \geq 2$
and an element $x \in E^{t+1, t}_2$ such that $x$ survives to the
$E_k$-page.
Then: 
\begin{enumerate}
\item If $k \leq  (p-1)t$, then $x$ survives to $E_{k, \times}$ in the
$\gl_1$ spectral sequence and $d_{k ,\times}(x)  = d_k(x) $.
\item
If $t$ is even and $k = (p-1)t + 1$, then $d_{k, \times}(x)  = d_k(x) + \zeta
\beta \mathcal{P}^{t/2}(x) $ for some $\zeta \in \mathbb{F}_{p}^{\times}$
(independent of $R^\bullet$). 
\end{enumerate}
\end{theorem}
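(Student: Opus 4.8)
The plan is to reduce \Cref{topunivformula} to its algebraic counterpart, \Cref{firstalgdiff} together with \Cref{betaP0phi}, by a universal-example argument in the spirit of \cite[Sec.~5--6]{mathew_stojanoska_picard}. First I would recall that for $t-s \ge 1$ the spectral sequences \eqref{generalTotss} and \eqref{generalPicss} agree on $E_2$-terms (via the identification $\pi_t \gl_1(R^\bullet) \simeq \pi_t R^\bullet$ equivariantly for $t \ge 2$), so the only subtlety is the $t-s=-1$ column, and the only input needed is an analysis of the $\gl_1$-space of a $\mathbb{Z}[1/(p-1)!]$-local $\einfty$-ring through a range. Concretely, for a connective $\mathbb{Z}[1/(p-1)!]$-local $\einfty$-ring $R$ with $\pi_0 R = \mathbb{Z}[1/(p-1)!]$, I want the analogue of \eqref{squarezero}: the map of spectra $\tau_{[n,\,pn-1]} R \to \tau_{[n,\,pn-1]} \gl_1(R)$ — or rather a $p$-truncated logarithm going the other way — is an equivalence. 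The point is that the only homotopy operations obstructing this are the divided-power/Dyer--Lashof-type operations that vanish once $(p-1)!$ is inverted, exactly as the truncated exponential $\exp_p$ is defined once $(p-1)!$ is a unit; this is the ``spectral truncated logarithm'' alluded to and deferred to \cite{CHMS}, but for the present statement one only needs it through the stated range, which can be extracted from the connectivity of the relevant Goodwillie/Dyer--Lashof layers.

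Granting that, the comparison of differentials is formal. Given $x \in E_2^{t+1,t}$ surviving to $E_k$ with $k \le (p-1)t$, the filtration jump realizing $d_k(x)$ lives in a range $[t, t + (k-1)]$ with $t+(k-1) \le pt-1$, so it takes place within the range where $R^\bullet$ and $\gl_1(R^\bullet)$ agree as filtered objects; hence $x$ survives to $E_{k,\times}$ and $d_{k,\times}(x) = d_k(x)$. This proves (1). For (2), with $t$ even and $k = (p-1)t+1$, the differential now crosses out of the stable range — the target sits in $\pi_{pt}$, which is precisely the first spot where $\sym^p$ of the bottom layer contributes a new summand — and the discrepancy $d_{k,\times}(x) - d_k(x)$ is a natural, stable operation $E_2^{t+1,t} \to E_2^{t+1+k,\,pt}$ landing in the relevant subquotient. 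By naturality it is detected on a universal example: take $R^\bullet$ so that $\mathrm{Tot}$ of the bottom associated-graded piece is (a localization of) a shifted Eilenberg--MacLane object, reducing to cosimplicial $\mathbb{F}_p$-modules. There, by Priddy's computation of the cohomology of symmetric powers \cite{priddy}, the space of natural operations of this bidegree is one-dimensional over $\mathbb{F}_p$, spanned by $\beta \mathcal{P}^{t/2}$ (here $t/2 = \deg/2$ of the relevant class after the shift, matching the degree count $pt = t + 2\cdot\frac{t}{2}(p-1) + \ldots$ that makes $\mathcal{P}^{t/2}$ the operation raising degree by $(p-1)t$). Identifying the coefficient $\zeta$ as the same nonzero scalar that appears algebraically is then exactly the content of \Cref{betaP0phi} (for the bottom filtration jump) propagated through the tower; in particular $\zeta \in \mathbb{F}_p^\times$ is independent of $R^\bullet$.

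Concretely, the steps I would carry out, in order: (i) set up the filtered objects $\gl_1(R^\bullet)$ and $R^\bullet$ by the Postnikov/Whitehead filtration and record that the associated graded pieces agree equivariantly in degrees $\ge 2$; (ii) establish the $p$-truncated logarithm equivalence $\tau_{[n,pn-1]}\gl_1(R) \simeq \tau_{[n,pn-1]} R$ for $\mathbb{Z}[1/(p-1)!]$-local $R$, citing/adapting the Dyer--Lashof vanishing, which handles the whole stable range at once; (iii) deduce part (1) by comparing the two $E_k$-pages within that range; (iv) for part (2), identify the first obstruction beyond the range with a single natural operation, reduce to the $\einfty$-$\mathbb{F}_p$-algebra case $\sym^*$ of a cosimplicial vector space, and invoke Priddy plus \Cref{betaP0phi} to pin it down as $\zeta\,\beta\mathcal{P}^{t/2}$ with $\zeta \ne 0$; (v) check the degree bookkeeping ($t$ even is needed for $\mathcal{P}^{t/2}$ to make sense, and $k=(p-1)t+1$ is the page on which this operation has the correct bidegree).

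The main obstacle I anticipate is step (ii): proving the spectral truncated-logarithm equivalence cleanly, i.e., that after inverting $(p-1)!$ the map $\gl_1 R \to R$ is $(pn)$-truncated-equivalence on the $n$-connective cover. One has to know that the fibers of $\Omega^\infty \gl_1(R) \to \Omega^\infty R$ in this range are built only out of the symmetric powers $\sym^{\le p-1}$ of the bottom layer (whose higher homotopy involves divided powers that become free summands when $(p-1)!$ is inverted), and control the first genuinely new contribution at $\sym^p$. Since the excerpt explicitly defers the general ``spectral logarithm'' to \cite{CHMS}, for this paper I would only prove exactly the bounded statement needed — the vanishing of the obstruction groups in the stated range and the identification of the single leftover operation — rather than the full equivalence, keeping the argument self-contained and at the level of the comparison of the two Bousfield--Kan spectral sequences. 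Everything else (parts (1), the reduction in (2), and the identification of $\zeta$) is then either formal or a citation to \Cref{betaP0phi} and \cite{priddy}.
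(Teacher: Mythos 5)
Your overall architecture (universal example, Priddy's derived symmetric powers, a single natural operation beyond the stable range) matches the paper's, but there are two places where the argument as written does not go through. The more serious one is your treatment of $\zeta \neq 0$. You propose to identify $\zeta$ with ``the same nonzero scalar that appears algebraically,'' i.e., to deduce it from \Cref{betaP0phi}. But \Cref{betaP0phi} lives in a different situation: it compares the additive and multiplicative spectral sequences for the $\mathfrak{m}$-adic filtration on a discrete (cosimplicial) ring, and the operation appearing there is $\beta\mathcal{P}^0$, acting on classes of internal degree zero. The topological statement concerns the Tot filtration on $\gl_1$ of a cosimplicial $\einfty$-ring and the operation $\beta\mathcal{P}^{t/2}$ on a class of internal degree $t \ge 2$; there is no map of filtered situations that transports the algebraic nonvanishing to the topological one. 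The paper supplies a genuinely topological input here: it builds $A = \tau_{\le pt}\mathrm{Free}_{\F_p}(\Sigma^t \F_p)$, with $\pi_*A \simeq \F_p[u]/(u^{p+1})$, and shows (\Cref{notdeloop}) that $K(\F_p,t) \to GL_1(A)$ does not deloop, because the $p$th power of the fundamental class in $\F_p\wedge\Sigma^\infty_+ K(\F_p,t)$ vanishes by \cite[Lemma 6.1]{CLM76} while the class $u\in\pi_t A$ has $u^p \neq 0$. That Dyer--Lashof-type computation is what forces $\zeta \neq 0$, and nothing in \Cref{betaP0phi} substitutes for it.

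The second issue is your step (ii). You correctly identify the ``spectral truncated logarithm'' as the main obstacle and note it is deferred to \cite{CHMS}, but then part (1) of the theorem rests on an equivalence you have not proved. The paper does not prove (or need) that equivalence: instead it works directly with the corepresenting object $R^\bullet = \mathrm{Free}(\mathcal{G}^\bullet)[1/(p-1)!]$ for a class in $E_k^{t+1,t}$, decomposes $\tau_{\le pt}R^\bullet$ into the extended powers $\mathcal{G}^{\bullet\wedge u}_{h\Sigma_u}$, and checks (\Cref{extpower}) that for $2\le u\le p-1$ these contribute nothing in the relevant range: since $|\Sigma_u|$ is invertible the homotopy orbits are computed on cohomology, and the only class in range is a free rank-one module on which $\Sigma_u$ acts by the sign representation, so its coinvariants vanish. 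This sparsity in the universal example is what yields both the survival of $x$ to $E_{k,\times}$ and the equality $d_{k,\times}=d_k$ in the range $k\le (p-1)t$, with no truncated-logarithm equivalence required. If you want to keep your outline, you should replace (ii) by this universal-example sparsity argument and replace your appeal to \Cref{betaP0phi} in (iv) by the delooping obstruction of \Cref{notdeloop}.
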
 

We will discuss the universal operation $\beta \mathcal{P}^{t/2}$ as
well: it arises from the right derived functors of the symmetric algebra
functor in the sense of Priddy \cite{priddy}, or as a power operation for
$\einfty$-ring spectra over $\mathbb{Z}[1/(p-1)!]$.
Again, the only fact we will need about it is its Frobenius-semilinearity
property, and we will not try to determine the constant $\zeta$. 
When $p =2 $, this recovers the comparison results of \cite[Sec.
5--6]{mathew_stojanoska_picard}. 
In particular, we shall assume throughout that $p > 2$. 
Unlike in \cite{mathew_stojanoska_picard}, we will 
use the same notation 
for elements of the multiplicative and additive spectral sequences. 

\subsection{Proofs}
\newcommand{\lan}{\mathrm{Lan}}
\Cref{topunivformula} is proved using the same strategy as in
\cite[Sec.~6]{mathew_stojanoska_picard}, although we will not try to determine $\zeta$. 
Namely, we consider the ``universal'' example of a cosimplicial $\einfty$-ring
$R^\bullet$
(with $(p-1)!$ inverted) together with 
a class in $E_k^{t+1, t}$ and prove the desired claims here, where it will
follow by a sparsity argument. 
We need to look for the cosimplicial $\einfty$-ring (which is
$\mathbb{Z}[1/(p-1)!]$-local) which corepresents the
functor
\[ X^\bullet \mapsto \Omega^\infty \left( \Sigma^{-1} \mathrm{fib}(
\mathrm{Tot}_{t+k}(X^\bullet) \to \mathrm{Tot}_t(X^\bullet) ) \right), \]
since a class in $E_k^{t+1,t}$ represents a class in $\pi_{-1}
\mathrm{Tot}_{t+k} (X^\bullet)$ trivialized in $\mathrm{Tot}_t (X^\bullet)$.

In this subsection, everything will be $\mathbb{Z}[1/(p-1)!]$-localized. 

\begin{cons}
To construct this universal cosimplicial $\einfty$-ring $R^\bullet$, we 
let $\lan$ denote the functor of left Kan extension. 
We define the pointed cosimplicial space $\mathcal{F}^\bullet$ via the homotopy pushout
\[  
\xymatrix{
\lan_{\Delta^{\leq t} \to \Delta}(\ast)_+ \ar[d] \ar[r] & \ast \ar[d] \\
\lan_{\Delta^{\leq t   + k} \to \Delta}(\ast)_+  \ar[r] &  \mathcal{F}^\bullet
}.
\]
The cosimplicial $\einfty$-ring $R^\bullet$ is defined via
\[ R^\bullet  = \mathrm{Free}( \Sigma^{-1} \Sigma^\infty
\mathcal{F}^\bullet) [1/(p-1)!].  \]
We write $\mathcal{G}^\bullet$ for the cosimplicial spectrum 
$\Sigma^{-1} \Sigma^\infty
\mathcal{F}^\bullet[1/(p-1)!]$, so that 
\begin{equation} R^\bullet  = \mathrm{Free}( \mathcal{G}^\bullet)[1/(p-1)!].
\end{equation}
\end{cons}

Pointwise, the left Kan extensions have the homotopy types of wedges of
spheres. 
As in \cite[Sec.~6.2]{mathew_stojanoska_picard}, we find 
the following properties of the pointed cosimplicial space $\mathcal{F}^\bullet: \Delta
\to \mathcal{S}_*$: 
\begin{enumerate}
\item  $\mathcal{F}(T)$ (for $T \in \Delta$) is a wedge of $S^{t+1}$'s and
$S^{t + k}$'s.
\item $\mathcal{F}^\bullet|_{\Delta^{\leq t}}$ is contractible. 
For any $T \in \Delta^{\leq t + k}$, $\mathcal{F}(T)$ is a wedge of copies
of $S^{t+1}$.
\end{enumerate}
By desuspending, this of course determines the pointwise homotopy type of
$\mathcal{G}(T)$ for any $T \in \Delta$.

The Bousfield--Kan spectral sequence for the cosimplicial spectrum
$\mathcal{F}^\bullet$ is determined as in \cite[Sec.~6.4]{mathew_stojanoska_picard} (cf.~the proof of Prop.~6.3.1,
loc.~cit.), and in particular follows from the 
calculation there of the BKSS for the cosimplicial spectrum $\Sigma^\infty_+
\mathrm{Lan}_{\Delta^{\leq t} \to \Delta}(\ast)$.
The BKSS for $\mathcal{G}^\bullet$ is a shift by one of that of
$\Sigma^\infty \mathcal{F}^\bullet[1/(p-1)!]$, and we describe
it next.  For the reader's convenience, we also include an example
in Figure~\ref{BKSSG} below. 
\begin{prop}
\label{GBKSS}
\begin{enumerate}
\item 
As graded modules over $\pi_*(S^0)[1/(p-1)!]$, 
we have
\[ H^s( \pi_*(\mathcal{G})^\bullet) \simeq  
\begin{cases} 
0 & s \neq t + 1, t + k + 1 \\
\pi_*(S^t)[1/(p-1)!] &  s = t+1 \\
\pi_*(S^{t+ k-1})[1/(p-1)!] & s = t + k + 1
 \end{cases} .
\]
\item
We let $\iota \in E_2^{t+1, t}$ be the fundamental generating class. 
There is a $d_k$ in the BKSS, which carries $\iota$ to a
generator in $E_2^{t+k+1, t+k-1} \simeq \pi_{t + k -1}(S^{t + k-1})[1/(p-1)!]$.
\end{enumerate}
\end{prop}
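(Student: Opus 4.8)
The plan is to deduce both statements directly from the corresponding computation for the cosimplicial spectrum $\Sigma^\infty_+ \lan_{\Delta^{\leq t} \to \Delta}(\ast)$ carried out in \cite[Sec.~6.4]{mathew_stojanoska_picard}, by tracking the (co)fiber sequences used to build $\mathcal{G}^\bullet$. First I would recall the key input: for the constant diagram $\ast$ on $\Delta^{\leq m}$, the cosimplicial spectrum $\Sigma^\infty_+ \lan_{\Delta^{\leq m} \to \Delta}(\ast)$ has cohomotopy $H^s$ concentrated in $s = 0$ (contributing $\pi_*(S^0)$, the unit) and $s = m+1$ (contributing $\pi_*(S^m)$, up to the localization), essentially because left Kan extension along $\Delta^{\leq m} \hookrightarrow \Delta$ of the point produces, levelwise, wedges of spheres whose associated cochain complex computes the reduced homology of the $(m{+}1)$-skeleton pair, i.e.\ a single class in filtration $m+1$. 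This is precisely the content invoked in loc.\ cit.\ for the proof of Prop.~6.3.1 there.

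Next I would run the two homotopy (co)fiber sequences defining the relevant objects. The homotopy pushout square defining $\mathcal{F}^\bullet$ gives a cofiber sequence of pointed cosimplicial spaces whose stabilization and desuspension reads
\[
\Sigma^{-1}\Sigma^\infty \bigl(\lan_{\Delta^{\leq t}\to\Delta}(\ast)_+\bigr)[1/(p-1)!] \to \mathcal{G}^\bullet \to \Sigma^{-1}\Sigma^\infty\bigl(\lan_{\Delta^{\leq t+k}\to\Delta}(\ast)_+\bigr)[1/(p-1)!],
\]
up to a shift bookkeeping the reduced/unreduced distinction. Taking the long exact sequence in cohomotopy $H^s(\pi_*(-))$ and plugging in the concentration statement above (with $m = t$ and $m = t+k$ respectively), the unit classes in $H^0$ on either end cancel via the connecting map, leaving exactly $H^{t+1} \simeq \pi_*(S^t)[1/(p-1)!]$ from the $\Delta^{\leq t}$ piece (shifted up by one from its $H^0$ after the $\Sigma^{-1}$ and the reduction) and $H^{t+k+1} \simeq \pi_*(S^{t+k-1})[1/(p-1)!]$ from the $\Delta^{\leq t+k}$ piece. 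This is statement (1). I would also double-check here that property (2) of $\mathcal{F}^\bullet$ listed just before the proposition — that $\mathcal{F}^\bullet|_{\Delta^{\leq t}}$ is contractible and that $\mathcal{F}(T)$ is a wedge of $S^{t+1}$'s for $T \in \Delta^{\leq t+k}$ — is consistent with and in fact forces this answer.

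For statement (2), the point is that the class $\iota \in E_2^{t+1,t}$ is the generator surviving from the first piece, and the $d_k$-differential on it is identified with the connecting map in the cofiber sequence above: $\iota$ lifts to $\mathrm{Tot}_{t+k}$ but not to $\mathrm{Tot}_{t+k+1}$, and its obstruction lands in $H^{t+k+1}$, which by (1) is a free rank-one module on a class in degree $t+k-1$. That the differential is a $d_k$ (and not longer) and that it hits a \emph{generator} both follow because the cofiber sequence was designed precisely to make $\mathcal{G}^\bullet$ the corepresenting object for ``a class in $E_k^{t+1,t}$'': the two extreme terms have cohomotopy in complementary ranges, so the spectral sequence of $\mathcal{G}^\bullet$ has only these two nonzero columns and the differential between them must be an isomorphism onto the generator for degree reasons (the source and target are each $\pi_{t+k-1}$ of a sphere, matched by the shift). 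I expect the main obstacle to be purely bookkeeping: getting the shifts right between the reduced suspension spectrum $\Sigma^\infty \mathcal{F}^\bullet$, the unreduced $\Sigma^\infty_+ \lan(\ast)$, and the $\Sigma^{-1}$, so that the indices $t+1$ and $t+k+1$ come out exactly as stated — but this is entirely parallel to \cite[Sec.~6.2--6.4]{mathew_stojanoska_picard} and the localization at $(p-1)!$ plays no role beyond being carried along formally.
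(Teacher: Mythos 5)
Your strategy coincides with the paper's: the paper proves this proposition by citing the computation of the Bousfield--Kan spectral sequence of $\Sigma^\infty_+\mathrm{Lan}_{\Delta^{\leq m}\to\Delta}(\ast)$ from \cite[Sec.~6.4]{mathew_stojanoska_picard} and feeding it through the defining pushout, which is exactly what you propose. However, there are two concrete problems. The first is the bookkeeping you flag but do not resolve: $\Sigma^\infty\mathcal{F}^\bullet$ is the \emph{cofiber} of $\Sigma^\infty_+\mathrm{Lan}_{\Delta^{\leq t}\to\Delta}(\ast)\to\Sigma^\infty_+\mathrm{Lan}_{\Delta^{\leq t+k}\to\Delta}(\ast)$, so the rotation placing $\mathcal{G}^\bullet$ in the middle is
\[ \Sigma^{-1}\Sigma^\infty_+\mathrm{Lan}_{\Delta^{\leq t+k}\to\Delta}(\ast)[1/(p-1)!] \longrightarrow \mathcal{G}^\bullet \longrightarrow \Sigma^\infty_+\mathrm{Lan}_{\Delta^{\leq t}\to\Delta}(\ast)[1/(p-1)!]. \]
Your displayed sequence transposes the two Kan-extension terms and desuspends the wrong one, and the parenthetical explanation of where $H^{t+1}\simeq\pi_*(S^t)$ comes from is accordingly garbled: it is the (unshifted) image of the $H^{t+1}$-column of the $\Delta^{\leq t}$ piece, while the $H^{t+k+1}\simeq\pi_*(S^{t+k-1})$ column is the $H^{t+k+1}\simeq\pi_*(S^{t+k})$ column of the $\Delta^{\leq t+k}$ piece shifted one stem down by the desuspension. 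The stated answer in (1) is right, but the derivation as written does not produce it.

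The more serious issue is in part (2): ``the differential between them must be an isomorphism onto the generator for degree reasons'' is not a proof. Degree reasons show only that $d_k$ is the unique \emph{possible} differential; they are equally consistent with $d_k=0$, in which case the generator of $E_2^{t+k+1,t+k-1}$ would survive. What forces the differential is the known $d_{m+1}$ off the unit class in the BKSS of $\Sigma^\infty_+\mathrm{Lan}_{\Delta^{\leq m}\to\Delta}(\ast)$ (part of the computation in \cite[Sec.~6.4]{mathew_stojanoska_picard}), pushed through the connecting map $f\colon\Sigma^{-1}\Sigma^\infty_+\mathrm{Lan}_{\Delta^{\leq t+k}\to\Delta}(\ast)\to\mathcal{G}^\bullet$: the desuspended unit class in filtration $0$ supports a $d_{t+k+1}$ hitting the generator of the $(t+k+1)$-column of the source, $f_*$ is an isomorphism on that column but kills the filtration-$0$ column (which vanishes for $\mathcal{G}^\bullet$), so by naturality the generator of $E_2^{t+k+1,t+k-1}(\mathcal{G}^\bullet)$ must die by the $E_{t+k+1}$-page; the only admissible source is $E^{t+1,t}$ via a $d_k$, which therefore carries $\iota$ to a generator. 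Some argument of this kind (or a direct identification of the relevant totalizations) is needed; it is the content of the citation both you and the paper rely on, and since you are writing the proof out you should make it explicit.
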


Now we consider the BKSS for $R^\bullet$ in the desired range, which depends
only on the Postnikov section $\tau_{\leq pt} R^\bullet$. We observe that 
that there is an equivalence of cosimplicial spectra
\[ \tau_{\leq pt} R^\bullet \simeq S^0[1/(p-1)!] \vee \mathcal{G}^\bullet \vee
\mathcal{G}^{\bullet \wedge 2}_{h \Sigma_2} \vee \dots \vee 
\mathcal{G}^{\bullet \wedge p}_{h \Sigma_p}.
\]

We claim that in the range of concern, none of the terms besides
$\mathcal{G}^\bullet$ and 
$\mathcal{G}^{\bullet \wedge p}_{h \Sigma_p}$ can contribute; 
this follows from the next lemma.  
\begin{lemma} 
\label{extpower}
For any $2 \leq u \leq p-1$, the BKSS for $(\mathcal{G}^{\bullet \wedge
u})_{h \Sigma_u}$ is zero at $E_2$ in cohomological degrees  below $(u-1)
(t+1) + (t + k + 1)$.
\end{lemma}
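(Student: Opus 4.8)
The plan is to compute the $E_2$-page of the Bousfield--Kan spectral sequence for $(\mathcal{G}^{\bullet\wedge u})_{h\Sigma_u}$ explicitly and read off the range of vanishing. Recall that for a cosimplicial spectrum $X^\bullet$ the $E_2$-page is the cohomotopy $H^s(\pi_t X^\bullet)$ of the cosimplicial graded abelian group of levelwise homotopy groups. Two structural inputs reduce the problem to pure algebra. First, by the pointwise description of $\mathcal{F}^\bullet$ recalled above, each $\mathcal{G}(T)$ is a wedge of spheres, so $\pi_*(\mathcal{G}(T))$ is free over $R := \pi_*(S^0)[1/(p-1)!]$ and the Künneth map is a natural, $\Sigma_u$-equivariant isomorphism $\pi_*(\mathcal{G}(T)^{\wedge u}) \cong (\pi_*\mathcal{G}(T))^{\otimes_R u}$. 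Second, since $2 \le u \le p-1$ we have $u! \mid (p-1)!$, which is invertible, so $\mathcal{G}(T)^{\wedge u}_{h\Sigma_u}$ is a retract of $\mathcal{G}(T)^{\wedge u}$ (via the averaging idempotent) and its homotopy is the $\Sigma_u$-coinvariants. Combining these levelwise,
\[ \pi_*\big((\mathcal{G}^{\wedge u})^\bullet_{h\Sigma_u}\big) \;\cong\; \big((\pi_*\mathcal{G}^\bullet)^{\otimes_R u}\big)_{\Sigma_u}. \]

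Next I would identify the cohomotopy of the right-hand side by Künneth. Since $u!$ is invertible, taking $\Sigma_u$-coinvariants is exact and commutes with $H^*$, so $E_2\big((\mathcal{G}^{\bullet\wedge u})_{h\Sigma_u}\big) \cong \big(H^*((\pi_*\mathcal{G}^\bullet)^{\otimes_R u})\big)_{\Sigma_u}$. Passing to normalized cochain complexes via Eilenberg--Zilber (the iterated shuffle map is a $\Sigma_u$-equivariant, lax symmetric monoidal quasi-isomorphism, with Koszul signs indexed by cohomological degree) and invoking \Cref{GBKSS}(1), which gives $H^*(\pi_*\mathcal{G}^\bullet) \cong R\langle\iota\rangle \oplus R\langle\kappa\rangle$ with $\iota$ in cohomological degree $t+1$ and $\kappa$ in cohomological degree $t+k+1$ — in particular $R$-free — the Künneth spectral sequence degenerates at each of the $u$ successive tensor stages. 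Hence $H^*((\pi_*\mathcal{G}^\bullet)^{\otimes_R u}) \cong (R\langle\iota\rangle \oplus R\langle\kappa\rangle)^{\otimes_R u}$ $\Sigma_u$-equivariantly, and its coinvariants are the weight-$u$ summand of the free graded-commutative $R$-algebra on $\iota$ and $\kappa$.

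Finally I would bound the cohomological degrees occurring there. That weight-$u$ summand is spanned over $R$ by monomials $\iota^a\kappa^b$ with $a+b=u$; since $t$ is even (cf.\ \Cref{topunivformula}(2)), $\iota$ has odd cohomological degree $t+1$, so $\iota^2=0$ and $a \le 1$. A monomial with $a=1$ lies in cohomological degree $(t+1)+(u-1)(t+k+1)$ and one with $a=0$ in degree $u(t+k+1)$; as $t+k+1 \ge t+1$ and $u \ge 2$, both are $\ge (u-1)(t+1)+(t+k+1)$. This is exactly the claimed vanishing below $(u-1)(t+1)+(t+k+1)$, and the bound is sharp for $u=2$, realized by $\iota\kappa$.

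The step demanding the most care is the Künneth reduction: one must check that the normalized cochain complex of $\pi_*\mathcal{G}^\bullet$ is levelwise $R$-flat and bounded below, so that the Künneth spectral sequence is available, and that it degenerates because its cohomology is $R$-\emph{free} by \Cref{GBKSS}, iterating this $u-1$ times; one must also track the Koszul signs in the shuffle equivalence, since it is precisely the oddness of $|\iota|=t+1$ that produces $\iota^2=0$ and hence the stated lower bound. The remaining verifications are routine.
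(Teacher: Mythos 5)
Your overall route is the same as the paper's: identify $\pi_*\bigl((\mathcal{G}^{\bullet\wedge u})_{h\Sigma_u}\bigr)$ levelwise with the $\Sigma_u$-coinvariants of $(\pi_*\mathcal{G}^\bullet)^{\otimes u}$ (using that the levels are wedges of spheres and that $u!$ divides the invertible $(p-1)!$), commute coinvariants past cohomotopy, apply K\"unneth using the freeness of $H^*(\pi_*\mathcal{G}^\bullet)$ from \Cref{GBKSS}, and then observe that the only generator in low cohomological degree, namely $\iota^{\otimes u}$ in degree $u(t+1)$, dies in the coinvariants for sign reasons. That is exactly the paper's argument.

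There is, however, one genuine gap: you restrict to $t$ even, and your sign bookkeeping only works there. The lemma is stated, and needed, for arbitrary $t\ge 2$: it is invoked in the proof of \Cref{topunivformula} both to rule out differentials $d_{i,\times}$ for $i<k$ and to identify the target of $d_{k,\times}$ in case (1), where $t$ carries no parity restriction (only case (2) assumes $t$ even). The problem is that you compute the parity of $\iota$ using only its cohomological degree $t+1$ (the Eilenberg--Zilber sign), forgetting the Koszul sign coming from the \emph{internal} degree $t$: the levelwise identification $\pi_*(\mathcal{G}(T)^{\wedge u})\cong(\pi_*\mathcal{G}(T))^{\otimes u}$ is $\Sigma_u$-equivariant only for the permutation action twisted by the internal grading of homotopy groups. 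A transposition therefore acts on $\iota\otimes\iota$ by $(-1)^{t^2+(t+1)^2}=-1$ for \emph{every} $t$ (equivalently, $\iota$ has odd total degree $t-(t+1)=-1$), so $\iota^{\otimes u}$ spans a copy of the sign representation and is annihilated by the coinvariants in all cases --- this is what the paper uses. With your convention, $\iota$ would be even for $t$ odd and $\iota^{\otimes u}$ would survive, so the proof as written cannot be extended to odd $t$ without this correction. Once the sign is fixed, the parity hypothesis can be dropped and your degree count --- the remaining generators $\iota^a\kappa^b$ with $a\le 1$, $a+b=u$, lying in cohomological degrees at least $(u-1)(t+1)+(t+k+1)$ --- yields the lemma in full generality.
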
 
\begin{proof}

Consider the cosimplicial $\pi_*(S^0)[1/(p-1)!]$-module
$M^\bullet = \pi_*\mathcal{G}^\bullet$, which is levelwise free. The cosimplicial 
module $\pi_*(\mathcal{G}^{\bullet \wedge u}_{h \Sigma_u})$ 
is obtained 
as $(M^{\otimes u})_{\Sigma_u}$ where the tensor product
is taken over $\pi_*(S^0)$, and where $\Sigma_u$ acts by permuting the factors
with appropriate signs. 
Since the order of $\Sigma_u$ is invertible, the operation of taking coinvariants is
well-behaved (and comes from the induced operation on cohomology); we obtain
that 
\[ H^*( \pi_*(\mathcal{G}^{\bullet \wedge u}_{h \Sigma_u}))
\simeq \left( H^*(M^\bullet)^{\otimes u}  \right)_{\Sigma_u}.
\]

For $s \leq (u-1)(t+1) +  (t + k+1)$, the only contribution to $H^*(  \pi_*(
\mathcal{G}^{\bullet \wedge u}))$ is in cohomological degree $u (t+1)$ 
and is a free module of rank one over $\pi_* (S^0)[1/(p-1)!]$.
However, the $\Sigma_u$ action is by the sign representation which annihilates
this upon passage to the coinvariants. 
\end{proof} 

Finally, we need to determine the contribution of $\mathcal{G}^{\bullet \wedge
p}_{h \Sigma_p}$ in case $k = (p-1)t  + 1$. 
Here we assume $t$ \emph{even.}
In the range in question, this only contributes via its
$\pi_{pt}$, given by 
\[ 
\pi_{pt} (\mathcal{G}^{\bullet \wedge pt}_{h \Sigma_p}) \simeq \mathrm{Sym}^p \pi_t
\mathcal{G}^{\bullet}.
\]

\begin{proof}[Proof of \Cref{topunivformula}]
Following the outline of \cite{mathew_stojanoska_picard}, it suffices to determine the behavior of the
tautological class in the BKSS for $\gl_1(R^\bullet)$. 
In the BKSS for $R^\bullet$, the tautological class supports a $d_k$
as in \Cref{GBKSS} (by naturality for the map $\mathcal{G}^\bullet \to R^\bullet$). 
We observe that the
tautological class in the \emph{multiplicative} spectral sequence cannot support a differential 
$d_{i, \times}$ for $i < k$ as there is no suitable target in the spectral sequence. 
Indeed, for connectivity reasons, there are only the terms $\mathcal{G}^{\bullet \wedge i}_{h \Sigma_i}$
for $i < p$ that can contribute here, and these do nothing 
thanks to \Cref{extpower}. 

If $k \leq (p-1) t $, we find that the target for a
$d_{k ,\times}$ in the $\gl_1$ spectral sequence is simply a
$\mathbb{Z}[1/(p-1)!]$; there are no additional contributions.
As in \cite{mathew_stojanoska_picard}, we conclude (by comparing with
square-zero extensions) that we can identify the differentials $d_k$ and $d_{k,
\times}$. 

Consider, finally, the case $k = (p-1) t + 1$ and $t$ even. In this case, the target of the
differential $d_{k, \times}$ still contains 
$\mathbb{Z}[1/(p-1)!] d_k \iota$, but also
has another factor as $H^{pt + 2}( \pi_{pt}  R^\bullet)$, which we need to
determine. 

Here we will need to use some of the theory of \emph{right derived functors} of
symmetric powers \cite{priddy}. 
They determine the cohomology of the symmetric powers of a cosimplicial object. 
The next proposition follows from \cite[Sec.~3 and 4]{priddy}, and in
particular Theorem 4.0.1 in loc.~cit., by running a Bockstein spectral
sequence. We note also that the work of Kraines \cite{Kra66} relates these
operations to $p$-fold Massey powers. 
\begin{prop} 
Let $X^\bullet$ be a cosimplicial $\mathbb{Z}[1/(p-1)!]$-module which is
levelwise free. 
Suppose $H^*(X^\bullet) $ is concentrated in degree $k$, $k$ odd, and 
$H^k( X^\bullet) \simeq \mathbb{Z}[1/(p-1)!]$ generated by a tautological
class $\iota$.
Then 
for each $i  \leq (k-1)/2$, we have
$H^{k+2i(p-1) + 1}( \sym^p X^\bullet) \simeq \mathbb{Z}/p$ generated by a class $\beta
\mathcal{P}^i \iota$. 
\end{prop}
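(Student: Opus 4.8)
\emph{Proof plan.} The approach is to reduce to a universal $\mathbb{Z}[1/(p-1)!]$-linear computation, invoke Priddy's calculation of the derived $p$-th symmetric power of a one-dimensional module concentrated in an odd degree, and then read off the integral-type answer from a mod-$p$ Bockstein spectral sequence whose behaviour is entirely forced. First I would reduce to a universal example: since $X^\bullet$ is levelwise free and $H^*(X^\bullet) \cong \mathbb{Z}[1/(p-1)!]\{\iota\}$ is concentrated in the single odd degree $k$, the cohomology $H^*(\sym^p X^\bullet)$ depends only on this datum (it is, via Dold--Puppe in the cosimplicial setting, the cohomology of the non-additive derived functor $L\sym^p$ applied to $\mathbb{Z}[1/(p-1)!]$ placed in degree $k$). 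So one may replace $X^\bullet$ by any convenient cochain model with the same cohomology, and I will write $L\sym^p(\mathbb{Z}[1/(p-1)!][k])$ for the result, with mod-$p$ reduction $L\sym^p(\mathbb{F}_p[k])$.

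Next I would assemble the $\mathbb{F}_p$- and rational computations. Because $k$ is odd, the intermediate symmetric powers are harmless: for $2 \le m \le p-1$ the order $m!$ of $\Sigma_m$ is invertible, so $L\sym^m$ is computed by $\Sigma_m$-coinvariants of the $m$-fold derived tensor power, whose only homology is one-dimensional with $\Sigma_m$ acting through the sign character (a transposition of two odd-degree classes contributes $-1$ under the Eilenberg--Zilber symmetry), whence $L\sym^m(\mathbb{F}_p[k]) = 0$. Thus $H^*(\sym^p(X^\bullet\otimes\mathbb{F}_p))$ has no decomposable classes, and by Priddy's computation \cite[Sec.~3--4, esp.~Thm.~4.0.1]{priddy} it is, as an $\mathbb{F}_p$-vector space, freely spanned by $\mathcal{P}^i\iota$ in degree $k + 2i(p-1)$ and $\beta\mathcal{P}^i\iota$ in degree $k + 2i(p-1)+1$, for $0 \le i \le (k-1)/2$ — in particular each of these degrees carries exactly a one-dimensional $\mathbb{F}_p$-cohomology group. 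The same sign-character argument, now valid with $p$ inverted as well since $p!$ is then invertible, gives $L\sym^p$ of the $\mathbb{Z}[1/p!]$-module in degree $k$ equal to zero; hence $H^*(\sym^p X^\bullet)$ is a $p$-torsion $\mathbb{Z}[1/(p-1)!]$-module.

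Finally I would run the mod-$p$ Bockstein spectral sequence for $\sym^p X^\bullet$: its $E_1$-page is $H^*(\sym^p(X^\bullet\otimes\mathbb{F}_p))$ with $d_1 = \beta$, and it converges to $(H^*(\sym^p X^\bullet)/\mathrm{torsion})\otimes\mathbb{F}_p = 0$. On the $E_1$-page just described, a class in degree $k + 2i(p-1)$ can only be carried by any differential to degree $k + 2i(p-1)+1$, while a class in degree $k + 2i(p-1)+1$ maps nowhere (the degrees $k + 2i(p-1)+2$ are unoccupied, since $2(p-1)\ge 2$). So the only possible nonzero differentials are the $d_1$'s $\mathcal{P}^i\iota \mapsto \beta\mathcal{P}^i\iota$, and since the spectral sequence converges to zero each of these must be a nonzero $d_1$, hence an isomorphism; in particular $\beta\mathcal{P}^i\iota \neq 0$ for every $i$ in range (this is consistent with, and for $i=0$ follows from, \Cref{betaP0phi}). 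A single $d_1$ landing in a degree $n$ contributes exactly one summand $\mathbb{Z}/p$ to $H^n(\sym^p X^\bullet;\mathbb{Z}[1/(p-1)!])$, and there are no higher differentials and no further $d_1$'s into these degrees; therefore $H^{k+2i(p-1)+1}(\sym^p X^\bullet) \cong \mathbb{Z}/p$, generated by the class detected by $\beta\mathcal{P}^i\iota$. Undoing the reduction to the universal example yields the statement.

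The main obstacle is the single nontrivial input: Priddy's explicit calculation of $L\sym^p(\mathbb{F}_p[k])$ for $k$ odd, namely that it is precisely $(k+1)$-dimensional and spanned by the classes $\mathcal{P}^i\iota$ and $\beta\mathcal{P}^i\iota$ in the stated degrees (one may alternatively identify these with the $p$-fold Massey powers of \cite{Kra66}). Once that is secured, the vanishing of the intermediate powers and of the rational group, together with the purely formal Bockstein bookkeeping, complete the proof with no further choices to make.
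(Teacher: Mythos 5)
Your proposal is correct and follows exactly the route the paper takes: the paper's entire proof is the one-line remark that the statement ``follows from [Priddy, Sec.~3--4, Thm.~4.0.1] by running a Bockstein spectral sequence,'' and your write-up is a faithful, more detailed expansion of that (reduction to the universal derived functor, Priddy's mod-$p$ calculation, vanishing of the rational and intermediate symmetric powers, and the sparsity/convergence argument in the Bockstein spectral sequence). The only point worth flagging is that forcing the cancelling differential to be $d_1$ rather than a higher Bockstein (which would give $\Z/p^r$) relies on Priddy's theorem asserting that $\beta\mathcal{P}^i\iota$ is literally the nonzero image of $\mathcal{P}^i\iota$ under the primary Bockstein, which you are implicitly and correctly using.
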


Now we take $k = t + 1$ in the above. It follows that 
$H^{pt + 2}( \pi_{pt}( \mathcal{G}^{\bullet \wedge pt}_{h \Sigma_p})) \simeq
\mathbb{Z}/p$, generated by $\beta \mathcal{P}^{t/2} \iota$. It follows that we
must have  a ``universal'' formula
\[ d_{k, \times}(\iota) = d_k(\iota) + \zeta \beta \mathcal{P}^{t/2} \iota,  \]
where $\zeta \in \mathbb{F}_p$. We note that the coefficient of $d_k(\iota)$ is
argued to be one similarly as in \cite{mathew_stojanoska_picard}.

To complete the argument, we need to argue that $\zeta \neq 0$. 
To do this, we consider (for any even $t$) an
$\einfty$-ring $A$ under $\mathbb{F}_p$ such that
$\pi_*(A) \simeq \mathbb{F}_p[u]/(u^{p+1})$  for $|u| = t$. 
We can construct $A$ by taking the free $\einfty$-ring over $\mathbb{F}_p$ on a
class in degree $t$ and truncating at $pt$, i.e.,
\[ A = \tau_{\leq pt} \mathrm{Free}_{\mathbb{F}_p}( \Sigma^t \mathbb{F}_p),  \]
where $\mathrm{Free}_{\mathbb{F}_p}$ denotes the free $\einfty$-algebra functor
over $\mathbb{F}_p$.
In \Cref{notdeloop} below, we will show that the natural map 
\begin{equation} \label{notloopmap} K( \mathbb{F}_p, t) \to GL_1( A),  \end{equation}
inducing an isomorphism on $\pi_t$, does not deloop. 
Of course, the analogous map $K( \mathbb{F}_p, t) \to \Omega^\infty A$ does
deloop. 
Equivalently, the associated (Atiyah--Hirzebruch) spectral sequence for $\gl_1(A)^{K(
\mathbb{F}_p, t+1)}$ 
admits a differential on the tautological class
in $H^{t+1}( K(\mathbb{F}_p, t+1), \pi_t A)$. 
By interpreting the AHSS as a BKSS, we find that there must be a nonzero $d_{k,
\times}$ and therefore $\zeta \neq 0$. 
\end{proof}

\begin{lemma}
\label{notdeloop}
The map \eqref{notloopmap} does not deloop to a map $K( \mathbb{F}_p, t+1)
\to BGL_1(A)$. 
\end{lemma}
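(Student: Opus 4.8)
The plan is to recast non-delooping as the non-vanishing of a single differential in a descent spectral sequence, and then to identify that differential with an algebraic operator already shown to be nonzero. Since $K(\mathbb{F}_p,t)$ is connected, $f$ lands in the identity component $SL_1(A)=\Omega^\infty\tau_{\geq 1}\gl_1(A)$, and $f$ deloops to a map $K(\mathbb{F}_p,t+1)\to BGL_1(A)$ exactly when the tautological class $\iota\in H^{t+1}(K(\mathbb{F}_p,t+1);\pi_t A)=E_2^{t+1,t}$ is a permanent cycle in the Atiyah--Hirzebruch (Bousfield--Kan) spectral sequence of $\gl_1(A)^{K(\mathbb{F}_p,t+1)}$. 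Because $A$ is an $\einfty$-$\mathbb{F}_p$-algebra it is, as a spectrum, a wedge of suspensions of $H\mathbb{F}_p$; the Atiyah--Hirzebruch spectral sequence of $A^{K(\mathbb{F}_p,t+1)}$ therefore degenerates and $\iota$ supports no differential there (equivalently, the additive map $K(\mathbb{F}_p,t)\to\Omega^\infty A$ deloops). By the stable-range statement of \Cref{topunivformula} --- which is proved via \Cref{extpower}, independently of the present lemma --- the multiplicative and additive spectral sequences agree through page $(p-1)t$, so $\iota$ survives to $E_{(p-1)t+1}$, and by the remaining part of \Cref{topunivformula} its first possible differential is
\[ d_{(p-1)t+1,\,\times}(\iota)=d_{(p-1)t+1}(\iota)+\zeta\,\beta\mathcal{P}^{t/2}(\iota)=\zeta\,\beta\mathcal{P}^{t/2}(\iota),\qquad \zeta\in\mathbb{F}_p. \]
As $\beta\mathcal{P}^{t/2}(\iota)\neq 0$ in $H^{\ast}(K(\mathbb{F}_p,t+1);\mathbb{F}_p)$, the lemma is equivalent to the assertion that $\zeta\neq 0$.

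To prove $\zeta\neq 0$ I would pass to a cosimplicial resolution. Choose $A\simeq\mathrm{Tot}(A^\bullet)$ with each $A^n$ a free $\einfty$-$\mathbb{F}_p$-algebra (the cobar construction of the free/forgetful adjunction), truncated in each cosimplicial degree at $pt$ so that the augmentation ideals $\mathfrak{m}^\bullet=\tau_{\geq 1}A^\bullet$ satisfy $\mathfrak{m}^{\bullet,p+1}=0$. Then $\gl_1(A)\simeq\mathrm{Tot}(\gl_1 A^\bullet)$, and the differential above is computed by the Bousfield--Kan spectral sequence carrying the filtration of $\gl_1 A^\bullet$ by powers of $\mathfrak{m}^\bullet$; on associated gradeds this is governed by the $p$-th symmetric powers of cosimplicial $\mathbb{Z}[1/(p-1)!]$-modules, which is precisely the setting of \Cref{constructionofphi}. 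There the discrepancy between the multiplicative and additive differentials is the operator $\phi$ of \Cref{firstalgdiff}, and after reindexing by the internal degree $t$ of the generating class (which turns $\mathcal{P}^0$ into the operation $\mathcal{P}^{t/2}$) it acts as $\zeta\,\beta\mathcal{P}^{t/2}$. By \Cref{betaP0phi}, $\phi=-\beta\mathcal{P}^0\neq 0$ --- the non-vanishing being exactly the elementary fact used in that proof, namely that in $\mathbb{Z}_p[\zeta_p]/\mathfrak{m}^{p+1}$ (equivalently in $\mathbb{F}_p[x]/x^{p+1}$) a suitable unit has order $p^2$ while having order only $p$ modulo higher powers of $\mathfrak{m}$. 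Hence $\zeta\neq 0$, so $f$ does not deloop.

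The hard part is the bookkeeping in the second paragraph: one must verify that the topological obstruction is faithfully computed --- through the cosimplicial resolution, the truncation at $pt$, and the powers filtration --- by the algebraic operator $\phi$ of \Cref{firstalgdiff}, and that the identification of the topological operation $\beta\mathcal{P}^{t/2}$ with the internally-shifted algebraic $\beta\mathcal{P}^0$ is compatible with the totalization and with all the intervening spectral sequences. One should also check that the extra class in $\pi_{pt-1}A$ forced by the $\einfty$-structure on $\mathrm{Free}_{\mathbb{F}_p}(\Sigma^t\mathbb{F}_p)$ plays no role; this is immediate on internal-degree grounds, since it can neither support nor receive the differential $d_{(p-1)t+1,\times}$ on $\iota$.
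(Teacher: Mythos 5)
Your proposal runs the paper's logic in reverse, and in doing so it opens a gap at exactly the point the lemma exists to close. In the paper, \Cref{notdeloop} is the \emph{input} used to prove $\zeta \neq 0$ in \Cref{topunivformula}(2): the non-delooping is established independently, and only then is it translated into a nonzero $d_{k,\times}$ on the tautological class. You instead reduce the lemma to the assertion $\zeta \neq 0$ and propose to prove that by transporting the algebraic identity $\phi = -\beta\mathcal{P}^0$ of \Cref{betaP0phi} into the topological setting. That transport is not formal, and the paper never makes it. \Cref{firstalgdiff} and \Cref{betaP0phi} live in the world of cosimplicial \emph{discrete} nonunital rings, where symmetric powers are strict and the generating class has internal degree zero; the coefficient $\zeta$ in \Cref{topunivformula} multiplies the operation $\beta\mathcal{P}^{t/2}$ on a class of internal degree $t \ge 2$ in a cosimplicial $\einfty$-ring, where the relevant construction is the extended power $\mathcal{G}^{\bullet\wedge p}_{h\Sigma_p}$, not $\sym^p$ of a cosimplicial module. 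Your "reindexing by the internal degree $t$, which turns $\mathcal{P}^0$ into $\mathcal{P}^{t/2}$" is precisely the unproved step: there is no a priori reason the two universal coefficients agree, and the footnoted remark that $\beta\mathcal{P}^0$ (hence $\phi$) vanishes on $H^0$ shows that naive degree-shifting of these statements is unreliable. You acknowledge this as "the hard part" but do not supply it. A secondary issue: the cosimplicial object relevant to the delooping obstruction is $A^{K(\mathbb{F}_p,t+1)_\bullet}$, whose levels are products of copies of $A$ and hence genuinely spectral, so the powers-of-$\mathfrak{m}^\bullet$ filtration of \Cref{algTruncLogsec} does not directly apply to it.

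The paper's actual proof is much shorter and avoids spectral sequences entirely: by the universal property of $GL_1$ (ABGHR), a delooping of \eqref{notloopmap} would produce a map of associative ring spectra $\mathbb{F}_p \wedge \Sigma^\infty_+ K(\mathbb{F}_p,t) \to A$ inducing an isomorphism on $\pi_t$; but the $p$-th Pontryagin power of the fundamental class in $H_*(K(\mathbb{F}_p,t);\mathbb{F}_p)$ vanishes (Cohen--Lada--May), whereas $u^p \neq 0$ in $\pi_*A = \mathbb{F}_p[u]/(u^{p+1})$. If you want to salvage your route, you would need an independent, genuinely topological identification of $\zeta$ --- for instance via the fact that the top power operation $\mathcal{P}^{t/2}$ on a degree-$t$ class in an $\einfty$-$\mathbb{F}_p$-algebra is the $p$-th power --- but carrying that out essentially reproduces the Pontryagin-power argument the paper already uses.
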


\begin{proof} 

To see that 
\eqref{notloopmap} is not a loop map, we use the universal properties of
$GL_1$ (cf. \cite[Sec.~3]{ABGHR}). 
If \eqref{notloopmap} were a loop map, we would have a morphism of associative
ring spectra
over $\mathbb{F}_p$
\[ \mathbb{F}_p \wedge \Sigma^\infty_+ K( \mathbb{F}_p, t)  \to A, \]
inducing an isomorphism on $\pi_t$. However, the class in $\pi_t A$ has nonzero
$p$th power, while the $p$th power of the tautological class in 
$\mathbb{F}_p \wedge \Sigma^\infty_+ K( \mathbb{F}_p, t) $
vanishes by \cite[Lemma 6.1]{CLM76}.\footnote{We are grateful to Tyler Lawson for showing us this argument
at $p = 2$, which was used in \cite{mathew_stojanoska_picard}.} 

\end{proof}

\begin{figure}
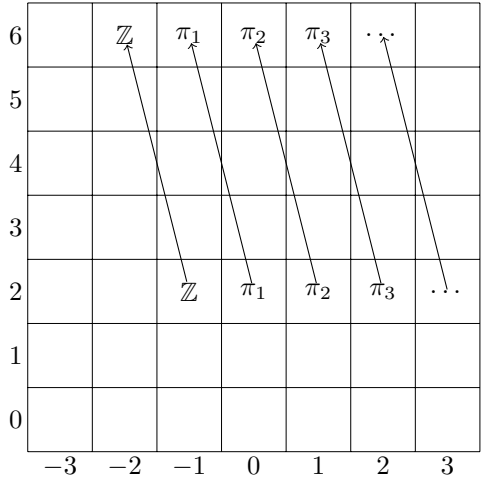
 
\begin{sseq}[entrysize=8.5mm,labelstep=1, arrows=to]{-3...3}{0...6}

\ssmoveto{-1}{2}
\ssdrop{\mathbb{Z}}
\ssname{a1}

\ssmoveto{0}{2}
\ssdrop{\pi_1}
\ssname{b1}

\ssmoveto{1}{2}
\ssdrop{\pi_2}
\ssname{c1}

\ssmoveto{2}{2}
\ssdrop{\pi_3}
\ssname{d1}

\ssmoveto{3}{2}
\ssdrop{\dots}
\ssname{e1}

\ssmoveto{-2}{6}
\ssdrop{\mathbb{Z}}
\ssname{a2}

\ssmoveto{-1}{6}
\ssdrop{\pi_1}
\ssname{b2}

\ssmoveto{0}{6}
\ssdrop{\pi_2}
\ssname{c2}

\ssmoveto{1}{6}
\ssdrop{\pi_3}
\ssname{d2}

\ssmoveto{2}{6}
\ssdrop{\dots}
\ssname{e2}

\ssgoto{a1} \ssgoto{a2} \ssstroke \ssarrowhead
\ssgoto{b1} \ssgoto{b2} \ssstroke \ssarrowhead
\ssgoto{c1} \ssgoto{c2} \ssstroke \ssarrowhead
\ssgoto{d1} \ssgoto{d2} \ssstroke \ssarrowhead
\ssgoto{e1} \ssgoto{e2} \ssstroke \ssarrowhead
\end{sseq}
\caption{The BKSS for $\mathcal{G}^\bullet$ for $t = 1, \, k = 4$} 
\label{BKSSG}
\end{figure}


\FloatBarrier
\bibliography{biblio}\bibliographystyle{alpha}

\newcommand{\etalchar}[1]{$^{#1}$}
\begin{thebibliography}{GHMR15}

\bibitem[ABG{\etalchar{+}}14]{ABGHR}
Matthew Ando, Andrew~J. Blumberg, David Gepner, Michael~J. Hopkins, and Charles
  Rezk.
\newblock Units of ring spectra, orientations and {T}hom spectra via rigid
  infinite loop space theory.
\newblock {\em J. Topol.}, 7(4):1077--1117, 2014.

\bibitem[AF78]{AF78}
Gert Almkvist and Robert Fossum.
\newblock Decomposition of exterior and symmetric powers of indecomposable
  {${\bf Z}/p{\bf Z}$}-modules in characteristic {$p$} and relations to
  invariants.
\newblock In {\em S\'eminaire d'{A}lg\`ebre {P}aul {D}ubreil, 30\`eme ann\'ee
  ({P}aris, 1976--1977)}, volume 641 of {\em Lecture Notes in Math.}, pages
  1--111. Springer, Berlin, 1978.

\bibitem[Bea14]{agnes_duality}
Agnes Beaudry.
\newblock The algebraic duality resolution at $p=2$.
\newblock {\em arXiv preprint arXiv:1412.2822}, 2014.

\bibitem[Beh12]{behrens_rev}
Mark Behrens.
\newblock The homotopy groups of {$S\sb {E(2)}$} at {$p\geq 5$} revisited.
\newblock {\em Adv. Math.}, 230(2):458--492, 2012.

\bibitem[Bob14]{bobkova_thesis}
Irina Bobkova.
\newblock {\em Resolutions in the {K}(2)-local {C}ategory at the {P}rime 2}.
\newblock ProQuest LLC, Ann Arbor, MI, 2014.
\newblock Thesis (Ph.D.)--Northwestern University.

\bibitem[BR05]{baker_richter_invertible}
Andrew Baker and Birgit Richter.
\newblock Invertible modules for commutative {$\Bbb S$}-algebras with residue
  fields.
\newblock {\em Manuscripta Math.}, 118(1):99--119, 2005.

\bibitem[BR07]{BakerRichteralg}
Andrew Baker and Birgit Richter.
\newblock Realizability of algebraic {G}alois extensions by strictly
  commutative ring spectra.
\newblock {\em Trans. Amer. Math. Soc.}, 359(2):827--857 (electronic), 2007.

\bibitem[{Buj}12]{bujard_finite}
C.~{Bujard}.
\newblock {Finite subgroups of extended Morava stabilizer groups}.
\newblock {\em arXiv e-prints}, June 2012.

\bibitem[CHMS]{CHMS}
Dustin Clausen, Gijs Heuts, Akhil Mathew, and Vesna Stojanoska.
\newblock Truncated logarithms for structured ring spectra.
\newblock In preparation.

\bibitem[CLM76]{CLM76}
Frederick~R. Cohen, Thomas~J. Lada, and J.~Peter May.
\newblock {\em The homology of iterated loop spaces}.
\newblock Lecture Notes in Mathematics, Vol. 533. Springer-Verlag, Berlin-New
  York, 1976.

\bibitem[DH95]{dev_hop_action}
Ethan~S. Devinatz and Michael~J. Hopkins.
\newblock The action of the {M}orava stabilizer group on the {L}ubin-{T}ate
  moduli space of lifts.
\newblock {\em Amer. J. Math.}, 117(3):669--710, 1995.

\bibitem[DH04]{dev_hop_04}
Ethan~S. Devinatz and Michael~J. Hopkins.
\newblock Homotopy fixed point spectra for closed subgroups of the {M}orava
  stabilizer groups.
\newblock {\em Topology}, 43(1):1--47, 2004.

\bibitem[Fr{\"o}68]{Frohlich68}
A.~Fr{\"o}hlich.
\newblock {\em Formal groups}.
\newblock Lecture Notes in Mathematics, No. 74. Springer-Verlag, Berlin-New
  York, 1968.

\bibitem[GH04]{goerss_hopkins}
P.~G. Goerss and M.~J. Hopkins.
\newblock Moduli spaces of commutative ring spectra.
\newblock In {\em Structured ring spectra}, volume 315 of {\em London Math.
  Soc. Lecture Note Ser.}, pages 151--200. Cambridge Univ. Press, Cambridge,
  2004.

\bibitem[GHM04]{ghm_skye_paper}
Paul Goerss, Hans-Werner Henn, and Mark Mahowald.
\newblock The homotopy of {$L\sb 2V(1)$} for the prime 3.
\newblock In {\em Categorical decomposition techniques in algebraic topology
  ({I}sle of {S}kye, 2001)}, volume 215 of {\em Progr. Math.}, pages 125--151.
  Birkh\"auser, Basel, 2004.

\bibitem[GHMR05]{ghmr_paper}
P.~Goerss, H.-W. Henn, M.~Mahowald, and C.~Rezk.
\newblock A resolution of the {$K(2)$}-local sphere at the prime 3.
\newblock {\em Ann. of Math. (2)}, 162(2):777--822, 2005.

\bibitem[GHMR15]{ghmr_pic}
Paul Goerss, Hans-Werner Henn, Mark Mahowald, and Charles Rezk.
\newblock On {H}opkins' {P}icard groups for the prime 3 and chromatic level 2.
\newblock {\em J. Topol.}, 8(1):267--294, 2015.

\bibitem[GL]{GepnerLawson}
David Gepner and Tyler Lawson.
\newblock Brauer groups and {G}alois cohomology of commutative
  {$\mathbb{S}$}-algebras.
\newblock in preparation.

\bibitem[GMS98]{gms}
V.~Gorbounov, M.~Mahowald, and P.~Symonds.
\newblock Infinite subgroups of the {M}orava stabilizer groups.
\newblock {\em Topology}, 37(6):1371--1379, 1998.

\bibitem[Hea15]{heard_tate}
Drew Heard.
\newblock The {T}ate spectrum of the higher real {$K$}-theories at height {$n=p
  - 1$}.
\newblock {\em arXiv preprint arXiv:1501.07759}, 2015.

\bibitem[Hen07]{henn_finite}
Hans-Werner Henn.
\newblock On finite resolutions of {$K(n)$}-local spheres.
\newblock In {\em Elliptic cohomology}, volume 342 of {\em London Math. Soc.
  Lecture Note Ser.}, pages 122--169. Cambridge Univ. Press, Cambridge, 2007.

\bibitem[Hew95]{hewett_finite}
Thomas Hewett.
\newblock Finite subgroups of division algebras over local fields.
\newblock {\em J. Algebra}, 173(3):518--548, 1995.

\bibitem[Hil06]{Hillthesis}
Michael Hill.
\newblock {\em Computational methods for higher real {$K$}-theory with
  applications to {$tmf$}}.
\newblock PhD thesis, Massachusetts Institute of Technology, 2006.

\bibitem[HM]{hopkins_miller}
M.~Hopkins and H.~Miller.
\newblock Lubin-{T}ate deformations in algebraic topology.
\newblock Unpublished.

\bibitem[HM15]{HillMeier}
Michael Hill and Lennart Meier.
\newblock All about {$Tmf_1(3)$}.
\newblock {\em arXiv preprint arXiv:1507.08115}, 2015.

\bibitem[HMS94]{hms_94}
Michael~J. Hopkins, Mark Mahowald, and Hal Sadofsky.
\newblock Constructions of elements in {P}icard groups.
\newblock In {\em Topology and representation theory ({E}vanston, {IL}, 1992)},
  volume 158 of {\em Contemp. Math.}, pages 89--126. Amer. Math. Soc.,
  Providence, RI, 1994.

\bibitem[Kar10]{karamanov_pic}
Nasko Karamanov.
\newblock On {H}opkins' {P}icard group {${\rm Pic}_2$} at the prime 3.
\newblock {\em Algebr. Geom. Topol.}, 10(1):275--292, 2010.

\bibitem[Kra66]{Kra66}
David Kraines.
\newblock Massey higher products.
\newblock {\em Trans. Amer. Math. Soc.}, 124:431--449, 1966.

\bibitem[Lad13]{lader}
Olivier Lader.
\newblock {\em {Une r{\'e}solution projective pour le second groupe de Morava
  pour {$p \geq 5$} et applications.}}
\newblock PhD thesis, Institut de Recherche Mathématiques Avancée, July 2013.

\bibitem[Lur11]{DAGXI}
Jacob Lurie.
\newblock Derived algebraic geometry {XI}: Descent theorems.
\newblock 2011.
\newblock Available at
  \url{http://www.math.harvard.edu/~lurie/papers/DAG-XI.pdf}.

\bibitem[Mat14]{galois}
Akhil Mathew.
\newblock The {G}alois group of a stable homotopy theory.
\newblock {\em arXiv preprint arXiv:1404.2156}, 2014.

\bibitem[Mat15]{Mathick}
Akhil Mathew.
\newblock A thick subcategory theorem for modules over certain ring spectra.
\newblock {\em Geom. Topol.}, 19(4):2359--2392, 2015.

\bibitem[MM15]{MMaffine}
Akhil Mathew and Lennart Meier.
\newblock Affineness and chromatic homotopy theory.
\newblock {\em J. Topol.}, 8(2):476--528, 2015.

\bibitem[MNN15]{MNN1}
Akhil Mathew, Niko Naumann, and Justin Noel.
\newblock Nilpotence and descent in equivariant stable homotopy theory.
\newblock {\em arXiv preprint arXiv:1507.06869}, 2015.

\bibitem[MS]{mathew_stojanoska_picard}
Akhil Mathew and Vesna Stojanoska.
\newblock The {P}icard group of topological modular forms via descent theory.
\newblock {\em Geom. Topol.}
\newblock to appear.

\bibitem[Mum66]{Mumford}
David Mumford.
\newblock {\em Lectures on curves on an algebraic surface}.
\newblock With a section by G. M. Bergman. Annals of Mathematics Studies, No.
  59. Princeton University Press, Princeton, N.J., 1966.

\bibitem[Nav99]{nave_thesis}
Lee~Stewart Nave.
\newblock {\em The cohomology of finite subgroups of {M}orava stabilizer groups
  and {S}mith-{T}oda complexes}.
\newblock ProQuest LLC, Ann Arbor, MI, 1999.
\newblock Thesis (Ph.D.)--University of Washington.

\bibitem[Nav10]{nave_smith_toda}
Lee~S. Nave.
\newblock The {S}mith-{T}oda complex {$V((p+1)/2)$} does not exist.
\newblock {\em Ann. of Math. (2)}, 171(1):491--509, 2010.

\bibitem[Pri73]{priddy}
Stewart Priddy.
\newblock {${\rm Mod}-p$} right derived functor algebras of the symmetric
  algebra functor.
\newblock {\em J. Pure Appl. Algebra}, 3:337--356, 1973.

\bibitem[Rav78]{ravenel_arf}
Douglas~C. Ravenel.
\newblock The non-existence of odd primary {A}rf invariant elements in stable
  homotopy.
\newblock {\em Math. Proc. Cambridge Philos. Soc.}, 83(3):429--443, 1978.

\bibitem[Rez98]{rezk_hm}
Charles Rezk.
\newblock Notes on the {H}opkins-{M}iller theorem.
\newblock In {\em Homotopy theory via algebraic geometry and group
  representations ({E}vanston, {IL}, 1997)}, volume 220 of {\em Contemp.
  Math.}, pages 313--366. Amer. Math. Soc., Providence, RI, 1998.

\bibitem[Rog08]{rognes_galois}
John Rognes.
\newblock Galois extensions of structured ring spectra. {S}tably dualizable
  groups.
\newblock {\em Mem. Amer. Math. Soc.}, 192(898):viii+137, 2008.

\bibitem[Ser79]{Serreloc}
Jean-Pierre Serre.
\newblock {\em Local fields}, volume~67 of {\em Graduate Texts in Mathematics}.
\newblock Springer-Verlag, New York-Berlin, 1979.
\newblock Translated from the French by Marvin Jay Greenberg.

\bibitem[SGA03]{SGA1}
{\em Rev\^etements \'etales et groupe fondamental ({SGA} 1)}.
\newblock Documents Math\'ematiques (Paris) [Mathematical Documents (Paris)],
  3. Soci\'et\'e Math\'ematique de France, Paris, 2003.
\newblock S{\'e}minaire de g{\'e}om{\'e}trie alg{\'e}brique du Bois Marie
  1960--61. [Algebraic Geometry Seminar of Bois Marie 1960-61], Directed by A.
  Grothendieck, With two papers by M. Raynaud, Updated and annotated reprint of
  the 1971 original [Lecture Notes in Math., 224, Springer, Berlin; MR0354651
  (50 \#7129)].

\bibitem[Tod67]{toda67}
Hirosi Toda.
\newblock An important relation in homotopy groups of spheres.
\newblock {\em Proc. Japan Acad.}, 43:839--842, 1967.

\bibitem[Tod68]{toda68}
Hirosi Toda.
\newblock Extended {$p$}-th powers of complexes and applications to homotopy
  theory.
\newblock {\em Proc. Japan Acad.}, 44:198--203, 1968.

\end{thebibliography}
\end{document}